\newcommand{\excise}[1]{}
\newtheorem{thm}{Theorem}[section]
\newtheorem{lemma}[thm]{Lemma}
\newtheorem{cor}[thm]{Corollary}
\newtheorem{prop}[thm]{Proposition}
\newtheorem{ex}[thm]{Example}
\newtheorem{Warn}[thm]{Caution}
\newenvironment{Abs}{\smallskip\begin{center}\begin{minipage}{13.8cm}}%
                     {\end{minipage}\end{center}\smallskip}
\numberwithin{equation}{section}
\def\wh{\widehat}
\def\emp{\nothing}
\def\sq{\square}
\def\ov{\overline}
\def\la{\lambda}
\def\ga{\gamma}
\def\si{\sigma}
\def\de{\delta}
\def\al{\alpha}
\def\be{\beta}
\def\om{\omega}
\def\cd{\mathcal D}
\def\cP{\mathcal P}
\def\CT{\mathcal T}
\def\ssu{\subset}
\def\<{\langle}
\def\>{\rangle}
\def\rc{ {\text {\rm c}  } }
\def\rh{ {\text {\rm h}  } }
\def\rd{ {\text {\rm d}  } }
\def\GL{ {\text {\rm GL} } }
\def\vt{\vartheta}
\def\0{{\mathbf 0}}
\def\nothing{\varnothing}
\def\.{\hskip.06cm}
\def\ts{\hskip.03cm}
\def\ze{{\zeta}}
\def\sign{{\rm sign}}
\def\sgn{{\rm sgn}}
\def\SP{{\textsc{\#P}}}
\def\pq{\delta}
\def\nin{\noindent}
\def\CC{\mathrm{CA}}
\def\CT{\mathrm{CT}}
\def\cD{\cd}
\def\ovl{\al}
\def\ovm{\be}
\def\ovn{\ga}
\def\ovr{\eta}
\def\wht{\wh}
\def\parti{\text{\small \textmd{P}}}
\def\dwn{{\hskip-.03cm \downarrow}\ts}
\def\upa{{\hskip-.03cm \uparrow}\ts}
\def\da{\vt}
\begin{document}
\title{Bounds on the Kronecker coefficients}

\author[Igor~Pak]{ \ Igor~Pak$^\star$}

\author[Greta~Panova]{ \ Greta~Panova$^\star$}

\date{\today}

\thanks{\thinspace ${\hspace{-.45ex}}^\star$Department of Mathematics,
UCLA, Los Angeles, CA 90095, USA; \ts
\texttt{\{pak,panova\}@math.ucla.edu}}

\maketitle

\begin{Abs}{\footnotesize {\sc Abstract.} \ts
We present several upper and lower bounds on the Kronecker coefficients
of the symmetric group.   We prove $k$-stability of the Kronecker
coefficients generalizing the (usual) stability, and giving a new upper bound.
We prove a lower bound via the characters of $S_n$.  We apply these and
other results to generalize Sylvester's unimodality of $q$-binomial coefficients $\binom{n}{k}_q$ as polynomials in $q$: we derive explicit sharp bounds on the differences of their consecutive coefficients.
}
\end{Abs}

\bigskip

\section{Introduction}

\nin
The \emph{Kronecker coefficients} are perhaps the most challenging, deep
and mysterious objects in Algebraic Combinatorics.  Universally admired,
they are beautiful, unapproachable and barely understood.
For decades since they were introduced by Murnaghan in~1938, the field
lacked tools to study them, so they remained largely out of reach.
However, in recent years a flurry of activity led to significant advances,
spurred in part by the increased interest and applications to other fields.
We refer to~\cite{PP-future} for a detailed survey of these advances and
further references.

In this paper, we initiate the study of asymptotics of the Kronecker coefficients.
We are motivated by applications to the \ts \emph{$q$-binomial $($Gaussian$\ts)$
coefficients}, and by connections to the \emph{Geometric Complexity
Theory} (see~$\S$\ref{ss:fin-GCT}).  The tools are
based on technical advances in combinatorial representation theory
obtained in recent years, see~\cite{BOR2,CDW,CHM,Man,Val2}, and our own
series of papers~\cite{PP_s,PP,PP_c,PPV}.  In fact, here we give several
far reaching extensions of our earlier work.

\smallskip

The \emph{Kronecker coefficients} \ts $g(\la,\mu,\nu)$ \ts are defined by:
\begin{equation}\label{eq:kron-def}
\chi^\la \ts \otimes \ts \chi^\mu \, = \, \sum_{\nu \vdash n}
\, g(\la,\mu,\nu)\. \chi^\nu\., \quad \text{where} \ \ \la,\mu \vdash n\ts,
\end{equation}
where $\chi^\al$ denotes the character of the irreducible representation
of $S_n$ indexed by partition~$\al\vdash n$.  They are integer and nonnegative
by definition, have full~$S_3$ symmetry, and satisfy a number of further
properties (see~$\S$\ref{ss:basic-kron}). In contrast with their ``cousins''
\emph{Littlewood--Richardson $($LR$)$ coefficients},
they lack a combinatorial interpretation or any meaningful positive formula,
and thus harder to compute and to estimate (cf.~$\S$\ref{ss:fin-LR}).

\smallskip

Our first result is a new type of stability of Kronecker coefficients:

\begin{thm}[$k$-stability] \label{t:k-stab-limit}
Let $\la,\mu,\nu \vdash n$ and $k\ge 1$ be fixed. Then the function \ts
$G_k(t)=G(\la,\mu,\nu; \ts t)$ \ts defined by
$$
G_k(t) \, := \, g\bigl(\la + (t^k), \ts \mu +(t^k), \ts \nu + (tk)\bigr)\.,
$$
is bounded and monotone increasing, as~$t$ grows.
\end{thm}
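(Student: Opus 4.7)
Set $m = n + tk$. I would prove monotonicity $G_k(t+1)\geq G_k(t)$ and boundedness of $\{G_k(t)\}$ separately; since $G_k$ is integer-valued, these two properties together yield the theorem.

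\emph{Monotonicity.} The plan is to produce an explicit injection
$$
\Psi_t\colon \operatorname{Hom}_{S_m}\bigl(V^{\nu+(tk)},\, V^{\lambda+(t^k)}\otimes V^{\mu+(t^k)}\bigr)\;\hookrightarrow\;\operatorname{Hom}_{S_{m+k}}\bigl(V^{\nu+((t+1)k)},\, V^{\lambda+((t+1)^k)}\otimes V^{\mu+((t+1)^k)}\bigr)
$$
between Hom-spaces of dimensions $G_k(t)$ and $G_k(t+1)$ respectively. The representation-theoretic inputs are: (i) the row- and column-Pieri rules, producing the multiplicity-one embeddings $V^{\lambda+((t+1)^k)}\hookrightarrow \operatorname{Ind}_{S_m\times S_k}^{S_{m+k}}(V^{\lambda+(t^k)}\boxtimes V^{(1^k)})$, the analogue for $\mu$, and $V^{\nu+((t+1)k)}\hookrightarrow \operatorname{Ind}_{S_m\times S_k}^{S_{m+k}}(V^{\nu+(tk)}\boxtimes V^{(k)})$; and (ii) the identity $\chi^{(1^k)}\otimes\chi^{(1^k)}=\chi^{(k)}$ on $S_k$, i.e.\ $g((1^k),(1^k),(k))=1$ (sign times sign is trivial). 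Given $\varphi$ in the domain of $\Psi_t$, I would first tensor it over $S_m\times S_k$ with the unique-up-to-scalar $S_k$-intertwiner $V^{(k)}\to V^{(1^k)}\otimes V^{(1^k)}$ coming from (ii); then use the three multiplicity-one embeddings from (i) together with Frobenius reciprocity to land in the target Hom-space on $S_{m+k}$. Injectivity of $\Psi_t$ should follow from the multiplicity-one statements in (i). Note that specializing to $k=1$ recovers Manivel's monotonicity, which is the classical Murnaghan stability in a sharpened form.

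\emph{Boundedness.} This is where I expect the main difficulty. The goal is to show that $G_k(t)$ stabilizes once $t$ is sufficiently large, which together with monotonicity yields a uniform bound. A natural strategy is to seek a formula valid for $t\gg 0$ that expresses $G_k(t)$ as a $t$-independent nonnegative combination of simpler coefficients. One promising route is to apply the Jacobi--Trudi or Murnaghan--Nakayama rule to cleanly peel off the rectangle $(t^k)$ added to $\lambda,\mu$ and the long first row $(tk)$ added to $\nu$, so that for large enough $t$ only combinatorial data depending on $\lambda,\mu,\nu,k$ contributes. An alternative is to use the character-theoretic upper bounds developed in our earlier work \cite{PP_s,PP,PPV}. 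Either way, combining the resulting boundedness with the monotonicity from the first step yields the $k$-stability asserted in the theorem; as a byproduct, the integer-valued monotone bounded sequence $G_k(t)$ must be eventually constant, which one may take as the definition of the $k$-stable Kronecker coefficient.
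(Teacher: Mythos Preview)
Your monotonicity argument is morally the paper's, but packaged differently. The paper simply cites Manivel's inequality (Theorem~\ref{t:manivel}) with the triple $\bigl((1^k),(1^k),(k)\bigr)$, which gives $G_k(t+1)\ge G_k(t)$ once $G_k(t)>0$ (and the inequality is trivial otherwise). Your explicit $\Psi_t$ is an attempt to reprove that special case of Manivel by hand. Be careful, though: induction does not commute with tensor products, so the passage from the $S_m\times S_k$-map $\varphi\boxtimes\psi$ to an $S_{m+k}$-intertwiner, and especially its injectivity, is not as automatic as ``should follow from the multiplicity-one statements.'' The standard clean proof of the semigroup inequality runs on the $\GL$ side via multiplication of highest-weight vectors; if you insist on a symmetric-group construction you will have to work harder to justify injectivity. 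Citing Manivel, as the paper does, sidesteps all of this.

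The boundedness half is where your proposal has a genuine gap. Neither Jacobi--Trudi nor Murnaghan--Nakayama gives a clean way to ``peel off'' the added rectangle and long row so that the $t$-dependence disappears, and the character/dimension upper bounds of Section~\ref{s:gen} grow with $t$, so they do not help either. The paper's mechanism is entirely different: it proves an effective statement (Theorem~\ref{t:k-stab}) that once $\min\{\lambda_k,\mu_k\}\ge\max\{\lambda_{k+1},\mu_{k+1}\}+(n-\nu_1)$, the coefficient is literally equal to $G_k(0)$. The proof goes through the Reduction Lemma (Lemma~\ref{l:reduction}) from~\cite{PP_c}, which replaces $(\lambda,\mu,\nu)$ by a triple $(\phi(\lambda),\phi(\mu),\phi(\nu))$ of bounded size without changing the Kronecker coefficient; the point is that the reduction map $\phi$ is \emph{insensitive} to adding $(t^k)$ to $\lambda,\mu$ and $(tk)$ to $\nu$, because those additions shift the relevant rows uniformly and leave the index set $I$ and all the differences $\lambda_j-\rho_{i_j}$ unchanged. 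That invariance is the missing idea; without it (or an equivalent), your boundedness step does not go through.
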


Denote by $\ov{g}_k(\la,\mu,\nu)$ the limit of~$G_k(t)$, for $t$ large enough.
We call it the \emph{$k$-stable Kronecker coefficient}.  For $k=1$,
the integer $\ov{g}_1(\la,\mu,\nu)$ is called the (usual) stable (or reduced)
Kronecker coefficient.  It has been a subject of intense study
going back to Murnaghan and Littlewood, while its monotonicity
property is due to Brion (see $\S$\ref{ss:fin-stab}).  For $\la=\mu$,
the existence of the limit as in the theorem was recently observed by Vallejo, using
a novel diagrammatic approach specific to tensor squares~\cite{Val2}.\footnote{After 
this paper was finished, Vallejo informed us that he was also able to use his 
diagrammatic approach to derive $k$-stability in full generality, 
to be included in a revised version of~\cite{Val2}. Also, John Stembridge reported to us 
that he obtained a further generalization (personal communication). }
In a different direction, for a special case of rectangular
diagrams $\la=\mu=(k^\ell)$, this limit was computed by Manivel~\cite{Man}.
Let us mention that the proof of Theorem~\ref{t:k-stab-limit} uses the technical
\emph{Reduction Lemma}~\ref{l:reduction} proved in~\cite{PP_c} for
computational complexity applications.


\smallskip

Our second result is a lower bound of the Kronecker coefficients $g(\la,\mu,\mu)$
for multiplicities in tensor squares of self-conjugate partitions:

\begin{thm}\label{t:char_effective}
Let $\mu=\mu'$ be a self-conjugate partition and let $\wh\mu=(2\mu_1-1,2\mu_2-3,\ldots)\vdash n$
be the partition of its principal hooks. Then:
$$
g(\la, \mu,\mu) \, \geq \, \bigl|\ts\chi^\la[\wh\mu] \ts \bigr|\,, \quad \text{for every}  \quad \la \vdash n\ts.
$$
\end{thm}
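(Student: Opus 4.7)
The plan is to recast the bound as a Schur-positivity statement and then locate the corresponding subrepresentation of $V^\mu \otimes V^\mu$. I begin with the Frobenius character formula
\[
g(\la,\mu,\mu) \,=\, \langle \chi^\la,\, \chi^\mu\cdot\chi^\mu\rangle \,=\, \frac{1}{n!}\sum_{\sigma \in S_n}\chi^\la(\sigma)\,\chi^\mu(\sigma)^2.
\]
The self-conjugacy assumption enters through the identity $|\chi^\mu[\wh\mu]|=1$, which I would prove by induction on the Durfee rank via Murnaghan-Nakayama: the first principal hook is the unique border strip of size $2\mu_1-1$ in the self-conjugate shape $\mu$, because any other such strip would necessarily enclose a $2\times 2$ block; its removal leaves a self-conjugate partition with principal hooks $(\wh\mu_2,\wh\mu_3,\ldots)$, and the induction closes. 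Write $\epsilon := \chi^\mu[\wh\mu]\in\{\pm 1\}$.

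Using the power-sum expansion $p_{\wh\mu} = \sum_\la \chi^\la[\wh\mu]\,s_\la$, the target bound $g(\la,\mu,\mu) \geq |\chi^\la[\wh\mu]|$ (holding for every $\la \vdash n$) is equivalent to Schur-positivity of \emph{both} symmetric functions
\[
s_\mu * s_\mu \,-\, p_{\wh\mu} \qquad \text{and} \qquad s_\mu * s_\mu \,+\, p_{\wh\mu},
\]
where $*$ denotes the Kronecker (internal) product; indeed, the coefficient of $s_\la$ in each expression is exactly $g(\la,\mu,\mu) \mp \chi^\la[\wh\mu]$. This reformulation is the conceptual heart of the argument.

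The main step, and the chief obstacle, is to realize each of these differences as the character of a genuine $S_n$-representation. My proposed route exploits the decomposition $V^\mu \otimes V^\mu = S^2 V^\mu \oplus \wedge^2 V^\mu$: since $\sigma^2$ has the same cycle type $\wh\mu$ as $\sigma \in C_{\wh\mu}$ (squares of odd cycles are cycles of the same length), these two summands carry traces $(1+\epsilon)/2$ and $(1-\epsilon)/2$ on $C_{\wh\mu}$. Exactly one summand therefore has the right trace to absorb $+p_{\wh\mu}$ and the other to absorb $-p_{\wh\mu}$. Concretely, I would attempt to construct an $S_n$-equivariant embedding of an induced character $\mathrm{Ind}_{Z_{S_n}(w)}^{S_n}(\chi)$ --- for $w$ of cycle type $\wh\mu$ and a well-chosen one-dimensional $\chi$ of the centralizer $Z_{S_n}(w)\cong\prod_i\zz/\wh\mu_i\zz$ --- into whichever of $S^2 V^\mu$ or $\wedge^2 V^\mu$ has the matching trace on $C_{\wh\mu}$. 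Note that pointwise estimates on class functions are insufficient here, since the virtual function $(\chi^\mu)^2 - z_{\wh\mu}\cdot\mathbf{1}_{C_{\wh\mu}}$ takes the negative value $1-z_{\wh\mu}$ on $C_{\wh\mu}$ yet ought to decompose with nonnegative Schur coefficients; a bona fide representation-theoretic construction is therefore required.
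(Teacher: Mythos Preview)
Your reformulation as Schur-positivity of $s_\mu * s_\mu \pm p_{\wh\mu}$ is correct and clarifying, and your computation of the traces of $S^2 V^\mu$ and $\wedge^2 V^\mu$ on $C_{\wh\mu}$ is accurate. However, the proposal stops precisely at the crux: you write that you ``would attempt to construct'' an embedding, but no construction is given, and the route you sketch does not lead to one. No induced module $\mathrm{Ind}_{Z(w)}^{S_n}(\chi)$ can have Frobenius characteristic $p_{\wh\mu}$, since the latter is a genuinely virtual character (its Schur coefficients $\chi^\la[\wh\mu]$ take both signs) and hence is not the character of any actual module; embedding some honest induced module into $S^2 V^\mu$ or $\wedge^2 V^\mu$ therefore does not yield the inequality you need for every~$\la$. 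Nor does the $S^2/\wedge^2$ split by itself: writing $g(\la,\mu,\mu)=a_\la+b_\la$ for the two multiplicities, the difference $a_\la-b_\la$ equals $\langle \chi^\la,\, \sigma\mapsto\chi^\mu(\sigma^2)\rangle$, which bears no simple relation to $\chi^\la[\wh\mu]$.

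The paper's proof uses a different and sharper splitting, working over the alternating group $A_n$ rather than~$S_n$. Since $\mu=\mu'$, the restriction $\chi^\mu\dwn$ splits as $\psi^\mu_+ \oplus \psi^\mu_-$, and since $\wh\mu$ has distinct odd parts, the class $C^{\wh\mu}$ splits in $A_n$ as $C^{\wh\mu}_+ \cup C^{\wh\mu}_-$. The decisive structural fact is that $\psi^\mu_\pm$ are the \emph{only} irreducible $A_n$-characters taking different values on $C^{\wh\mu}_+$ and $C^{\wh\mu}_-$. Decomposing $\psi^\mu_+ \otimes \chi^\la\dwn$ over $A_n$ and subtracting its character values on the two half-classes therefore isolates exactly $m_{\mu+}-m_{\mu-}$, which one computes to equal $\chi^\la[\wh\mu]$. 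Frobenius reciprocity then gives $g(\la,\mu,\mu) \geq \max\{m_{\mu+},m_{\mu-}\} \geq |m_{\mu+}-m_{\mu-}|$. The $A_n$-splitting does what your $S^2/\wedge^2$ split cannot: it supplies two genuine nonnegative multiplicities whose \emph{difference} is the signed character value, without ever needing to realise $p_{\wh\mu}$ itself as a module.
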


While it is relatively easy to obtain various upper bounds on the Kronecker
coefficients (see Section~\ref{s:gen}), this is the only general lower bound that we know.
The theorem strengthens a qualitative result \ts $g(\la, \mu,\mu) \ge 1$ \ts given
in~\cite[Lemma~1.3]{PPV}, used there to prove a special case of the Saxl conjecture
(see $\S$\ref{ss:fin-char}).  We use the bound to give a new proof of
Stanley's Theorem~\ref{t:stanley-unimod}, from~\cite{Sta-unim}.
%

\smallskip

Our final result is motivated by an application of bounds for Kronecker
coefficients to the \emph{$q$-binomial coefficients}, defined as:
$$
\binom{m+\ell}{m}_q
\, = \ \. \frac{(q^{m+1}-1)\. \cdots\. (q^{m+\ell}-1)}{(q-1)\.\cdots\. (q^{\ell}-1)}
\ \. = \, \, \sum_{n=0}^{\ell\ts m} \, \. p_n(\ell,m) \. q^n\ts.
$$
In~1878, \emph{Sylvester} proved \emph{unimodality} of the coefficients:
$$
p_0(\ell,m)\. \le \. p_1(\ell,m)\. \le \. \ldots \. \le \. p_{\lfloor\ell\ts m/2\rfloor}(\ell,m) \. \ge \. \ldots \. \ge \. p_{\ell\ts m}(\ell,m)\ts,
$$
see~\cite{Syl}.  In~\cite{PP_s}, we used the Kronecker coefficients to prove \emph{strict unimodality}:
\begin{equation}\label{eq:strict-unim}
p_k(\ell,m) \ts - \ts p_{k-1}(\ell,m) \. \ge 1\., \
\quad \text{for} \ \quad 2\le k \le \ell\ts m/2\ts, \ \. \ell, \ts m \ge 8\ts.
\end{equation}
Here we further strengthen this result as follows.

\begin{thm}\label{t:qbin-main}
There is a universal constant $A>0$, such that for all $m\ge \ell\ge 8$ and
$2\le k\le \ell \ts m/2$, we have:
$$
p_k(\ell,m) \ts - \ts p_{k-1}(\ell,m) \, \ts > \, A \, \frac{2^{\sqrt{s}}}{s^{9/4}}\,, \quad \ \text{where} \quad \.
s=\min\{2k,\ell^2\}\ts.
$$
\end{thm}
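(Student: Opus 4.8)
The plan is to relate the difference $p_k(\ell,m) - p_{k-1}(\ell,m)$ to a specific Kronecker coefficient and then apply Theorem~\ref{t:char_effective} to bound that coefficient from below. Recall the classical identity (used already in~\cite{PP_s}) expressing the $q$-binomial coefficient as a principal specialization of a Schur function: $\binom{\ell+m}{m}_q = s_{(m^\ell)}(1,q,\ldots,q^{\ell})/q^{\binom{\ell+1}{2}}$ up to the relevant power of $q$, together with the fact that the coefficients $p_k(\ell,m)$ record the number of partitions fitting in an $\ell\times m$ box of a given size. The key structural input is the known relation between consecutive differences $p_k - p_{k-1}$ and the multiplicity of an irreducible in a tensor product: specifically, $p_k(\ell,m) - p_{k-1}(\ell,m)$ equals $g(\la,\mu,\mu)$ (or a sum of such) for suitable $\la,\mu$ built from $k$, $\ell$, and $m$, where $\mu$ can be taken self-conjugate (e.g.\ a staircase-type or rectangular-type shape) so that Theorem~\ref{t:char_effective} applies. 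This reduces everything to lower-bounding a character value $|\chi^\la[\wh\mu]|$.

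Next, I would choose the partitions carefully so that the relevant character value is the one controlling the asymptotics. The natural candidate, following~\cite{PPV} and the Saxl-conjecture circle of ideas, is to take $\mu$ to be the staircase $\rho_r=(r,r-1,\ldots,1)$ (which is self-conjugate, with $\wh\mu$ its principal hooks), and $\la$ a hook or near-hook partition so that $\chi^\la[\wh\mu]$ can be evaluated combinatorially via the Murnaghan--Nakayama rule. The size parameter will be tuned so that the relevant staircase has $\Theta(\sqrt s)$ rows, which is where the $2^{\sqrt s}$ growth enters: a character value at a staircase shape, or equivalently the number of standard-Young-tableau-like objects counted by Murnaghan--Nakayama, grows like $2^{\sqrt s}/\mathrm{poly}(s)$ by a partition-counting estimate (the number of partitions of $N$ into distinct odd parts, or a closely related quantity, which is asymptotically $\sim c\, 2^{\sqrt{N}}/N^{3/4}$ by a Meinardus/saddle-point argument). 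Matching exponents forces $N=\Theta(s)$, and the polynomial correction $s^{-9/4}$ will come out of combining the $N^{-3/4}$ from the partition asymptotic with additional polynomial losses incurred in passing from the character value to the Kronecker coefficient and in the truncation $s=\min\{2k,\ell^2\}$ (the $\ell^2$ cap reflecting that once $k$ exceeds $\ell^2/2$ the staircase inside the $\ell\times m$ box saturates).

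More concretely, the steps are: (1) fix the dictionary $p_k - p_{k-1} = \sum g(\la,\mu,\mu)$, isolating one term with $\mu=\mu'$; (2) invoke Theorem~\ref{t:char_effective} to get $p_k - p_{k-1} \ge |\chi^\la[\wh\mu]|$ for a well-chosen $\la$; (3) evaluate or lower-bound $|\chi^\la[\wh\mu]|$ by an explicit count via Murnaghan--Nakayama, reducing to counting certain rim-hook tableaux / partitions into distinct parts; (4) apply the classical asymptotic estimate $2^{\sqrt{N}}/N^{3/4}$ (with explicit constant) for that count, with $N \asymp s$; (5) handle the two regimes of the minimum — $2k \le \ell^2$ versus $\ell^2 < 2k$ — separately, using in the second case that the staircase of size $\Theta(\ell)$ sits inside the box and the bound stabilizes at $2^{\Theta(\ell)}$; (6) absorb all constants into the universal $A$, checking the threshold $m\ge\ell\ge 8$ suffices for positivity of the lower-order terms. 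The main obstacle I expect is step~(3)--(4): getting a clean, explicit lower bound on $|\chi^\la[\wh\mu]|$ of exactly the order $2^{\sqrt s}/s^{9/4}$ — one must choose $\la$ so that the Murnaghan--Nakayama expansion does not suffer catastrophic sign cancellation, and then pin down the constant in the partition asymptotic and track the polynomial factors through every reduction without losing more than a fixed power of $s$. A secondary subtlety is ensuring the chosen $\la,\mu$ genuinely fit the $\ell\times m$ box constraints for all $k$ in the stated range, which is what dictates the precise form $s=\min\{2k,\ell^2\}$.
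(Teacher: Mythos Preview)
Your outline captures the spirit of the argument --- relate $p_k-p_{k-1}$ to a Kronecker coefficient, apply Theorem~\ref{t:char_effective}, reduce to a Murnaghan--Nakayama computation, and invoke partition asymptotics --- but there is a genuine gap at the very first step, and it propagates through the rest of the plan.

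The precise identity is Lemma~\ref{l:g_partitions}: \ $p_k(\ell,m)-p_{k-1}(\ell,m)=g(m^\ell,m^\ell,\tau_k)$ with $\tau_k=(\ell m-k,k)$. The partition $\mu=(m^\ell)$ is a \emph{rectangle}, and it is self-conjugate only when $\ell=m$. So Theorem~\ref{t:char_effective} applies directly only in the square case; for a general rectangle you cannot simply ``take $\mu$ self-conjugate'' as you suggest, and the staircase $\rho_r$ is not the shape that arises here (the staircase discussion in \S\ref{ss:stanley} and the asymptotic applications is a separate illustration, not part of this proof). Your step~(5) alludes to two regimes but gives no mechanism for passing from $g(m^\ell,m^\ell,\tau_k)$ with $\ell<m$ to a coefficient of the form $g(\mu,\mu,\lambda)$ with $\mu=\mu'$.

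The missing ingredient is Manivel's inequality (Theorem~\ref{t:manivel}). The paper's route is: (i)~treat the square case $\mu=(n^n)$ first, where Theorem~\ref{t:char_effective} does apply, $\wh\mu=(2n-1,2n-3,\ldots,1)$, and the character $\chi^{\tau_k}[\wh\mu]$ equals $b_k(n)-b_{k-1}(n)$ with $b_k(n)$ counting partitions into distinct odd parts $\le 2n-1$; (ii)~use Almkvist's quantitative unimodality (Lemmas~\ref{l:alm1}--\ref{l:alm2}) to bound $b_k(n)-b_{k-1}(n)\ge C\,2^{\sqrt{2k}}/(2k)^{9/4}$; (iii)~for a general $\ell\times m$ rectangle, split it as $(m^\ell)=((m-n)^\ell)+(n^\ell)$ and then $(n^\ell)=(n^n)+(n^{\ell-n})$, and use the semigroup-type inequality of Theorem~\ref{t:manivel} together with strict unimodality~\eqref{eq:strict-unim} to push the lower bound from the square to the rectangle. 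This last step is what produces the truncation $s=\min\{2k,\ell^2\}$ and is not something that falls out of a direct character evaluation. Without it, your argument only proves the theorem for $\ell=m$.
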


We compare this lower bound with upper bounds in~$\S$\ref{ss:qbin-upper}
(see also~$\S$\ref{ss:fin-qbin}). The proof of the theorem has several ingredients.
We use the above mentioned Stanley's theorem, an extension of analytic
estimates in the proof of \emph{Almkvist's Theorem}
(Theorem~\ref{t:almkvist-unimod}), and \emph{Manivel's inequality}
for the Kronecker coefficients (Theorem~\ref{t:manivel}).
Most crucially, we use the following connection between the
$q$-binomial and the Kronecker coefficients:

\begin{lemma}[Two Coefficients Lemma]
\label{l:g_partitions}
Let $n=\ell \ts m$, \ts $\tau_k=(n-k,k)$, where $1\leq k\leq n/2$.
Then:
$$g\bigl(m^\ell,m^\ell,\tau_k\bigr) \, = \, p_k(\ell,m) \. - \. p_{k-1}(\ell,m)\ts.
$$
\end{lemma}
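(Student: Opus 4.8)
The plan is to realize $p_k(\ell,m)$ as a Kronecker coefficient (or difference thereof) by interpreting the $q$-binomial coefficient as a character-theoretic generating function. Recall the classical fact that $\binom{m+\ell}{m}_q$ is the generating function for the number of partitions fitting inside an $\ell\times m$ box, i.e. $p_k(\ell,m)$ equals the number of partitions $\rho$ of $k$ with $\rho\subseteq (m^\ell)$. The standard route from here to Kronecker coefficients is via the action of $\GL$ or, combinatorially, via the expansion of the symmetric function $s_{m^\ell}$ in terms of symmetric power sums or via the fact that the Kronecker product $\chi^{m^\ell}\otimes\chi^{m^\ell}$ records the $S_n$-module structure of a tensor square whose graded pieces are controlled by box-partitions. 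Concretely, I would use the identity
$$
\sum_{\nu\vdash n} g\bigl(m^\ell,m^\ell,\nu\bigr)\, s_\nu \ = \ \text{(the Frobenius image of an explicit }S_n\text{-representation)},
$$
and pair both sides against $s_{\tau_k}$, using that for two-row shapes $\tau_k=(n-k,k)$ the multiplicity $\langle s_\mu, s_{\tau_k}\rangle$-type pairings telescope.

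The cleanest concrete mechanism: for a two-row partition $\tau_k=(n-k,k)$ one has the well-known branching/telescoping fact that, for any symmetric function $f=\sum_\rho c_\rho h_\rho$ expressed in the complete homogeneous basis indexed by partitions, the coefficient of $s_{\tau_k}$ equals $(\text{coeff of }h_{(n-k,k)}) - (\text{coeff of }h_{(n-k+1,k-1)})$ — this is exactly the Jacobi–Trudi determinant $s_{(n-k,k)} = h_{(n-k,k)} - h_{(n-k+1,k-1)}$ read in the dual direction. So I would first establish
$$
g\bigl(m^\ell,m^\ell,\tau_k\bigr) \ = \ \bigl\langle \chi^{m^\ell}\otimes\chi^{m^\ell},\ \chi^{\tau_k}\bigr\rangle \ = \ a_k - a_{k-1},
$$
where $a_j := \langle \chi^{m^\ell}\otimes\chi^{m^\ell},\ \chi^{(n-j,j)}\otimes\text{(trivial of $S_{\text{rest}}$)} \rangle$ or, more usefully, $a_j$ is the multiplicity of the permutation-module character $\chi^{(n-j)}\cdot\chi^{(j)}$ induced up, i.e. $a_j = \langle \chi^{m^\ell}\otimes\chi^{m^\ell},\ \mathrm{Ind}_{S_{n-j}\times S_j}^{S_n} \mathbf{1}\rangle$. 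The second step is then to identify this $a_j$ with $p_j(\ell,m)$: by Frobenius reciprocity $a_j = \langle \mathrm{Res}_{S_{n-j}\times S_j}(\chi^{m^\ell}\otimes\chi^{m^\ell}),\ \mathbf 1\rangle$, which counts the dimension of $S_{n-j}\times S_j$-invariants in the tensor square of the irreducible $[m^\ell]$; and this invariant dimension is exactly the number of ways to pair up a standard filling compatible with the box $(m^\ell)$ at level $j$ — equivalently, by the classical interpretation of $q$-binomials via the principal specialization of $s_{m^\ell}$, it equals $p_j(\ell,m)$. Combining the two steps gives $g(m^\ell,m^\ell,\tau_k)=p_k(\ell,m)-p_{k-1}(\ell,m)$.

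I expect the main obstacle to be making the identification $a_j = p_j(\ell,m)$ fully rigorous rather than heuristic: one must pin down precisely which $S_n$-module has $\chi^{m^\ell}\otimes\chi^{m^\ell}$ as its character and whose $S_{n-j}\times S_j$-invariants are counted by box-partitions. The natural candidate is to pass through the $\GL_N$ side — $\chi^{m^\ell}$ corresponds under Schur–Weyl to $s_{m^\ell}$, and the plethystic/Cauchy identity together with the principal specialization $s_{m^\ell}(1,q,q^2,\dots) $ produces the $q$-binomial — but transporting the two-row $\tau_k$ pairing faithfully through this correspondence requires care with the grading. An alternative, perhaps safer, route is purely within $S_n$: use that $\langle \chi^\lambda\otimes\chi^\mu,\ \mathrm{Ind}_{S_{n-j}\times S_j}^{S_n}\mathbf 1\rangle = \langle \chi^\lambda\cdot\chi^\mu\text{-type coefficient}\rangle$ and then invoke Littlewood's formula for Kronecker products with near-trivial characters. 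Either way, once $a_j=p_j(\ell,m)$ is secured, the telescoping via Jacobi–Trudi for $(n-k,k)$ is routine and completes the lemma.
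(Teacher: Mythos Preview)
The paper does not give its own proof of this lemma; it simply cites prior work (Vallejo, and the authors' earlier papers), so there is no in-paper argument to compare against.  Your proposed route is the correct one and is essentially how the cited proofs go.  The telescoping step is fine: since $h_{n-j}h_j=\sum_{i\le j}s_{(n-i,i)}$, pairing $\chi^{m^\ell}\otimes\chi^{m^\ell}$ against the permutation character of $S_{n-j}\times S_j$ gives $a_j=\sum_{i\le j}g(m^\ell,m^\ell,\tau_i)$, and differencing yields the Kronecker coefficient.

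The piece you flag as the obstacle --- the identification $a_j=p_j(\ell,m)$ --- is not hard once you use the right ingredient, which is the multiplicity-free branching of a rectangular character.  By Frobenius reciprocity and reality of $S_n$-characters,
\[
a_j \;=\; \bigl\langle \mathrm{Res}^{S_n}_{S_{n-j}\times S_j}\chi^{m^\ell},\ \mathrm{Res}^{S_n}_{S_{n-j}\times S_j}\chi^{m^\ell}\bigr\rangle .
\]
Now $\mathrm{Res}^{S_n}_{S_{n-j}\times S_j}\chi^{m^\ell}=\sum_{\alpha,\beta}c^{\,m^\ell}_{\alpha\beta}\,\chi^\alpha\boxtimes\chi^\beta$, and for the rectangle $m^\ell$ the Littlewood--Richardson coefficient $c^{\,m^\ell}_{\alpha\beta}$ is $1$ if $\alpha\subseteq m^\ell$ and $\beta$ is the $180^\circ$-rotated complement of $\alpha$ in the box, and $0$ otherwise.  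Hence the restriction is multiplicity-free and indexed exactly by partitions $\alpha\subseteq m^\ell$ with $|\alpha|=n-j$; the inner product above simply counts them.  By complementation this count equals $p_j(\ell,m)$.  That closes the gap, and the rest of your outline then proves the lemma.  Your vaguer suggestions (principal specialization, ``Littlewood's formula'') would also eventually lead here, but this direct $S_n$-argument is the shortest path.
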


This simple by very useful lemma was first proved in~\cite[$\S 4$]{Val2} 
and later in~\cite{PP_s}, but is implicit in~\cite{MY,PP}.  Note that it 
immediately implies Sylvester's unimodality theorem.

\smallskip

The rest of the paper is structured as follows.  We begin with a quick recap
of definitions, notations and some basic results we are using (Section~\ref{s:basic}).
In Section~\ref{s:gen} we give general upper bounds on Kronecker coefficients.
We then prove the above three theorems, in this order (sections~\ref{s:stab}--\ref{s:qbin}).
We conclude with final remarks and open problems (Section~\ref{s:fin}).

\smallskip

Finally, let us mention that the paper is very far from being self-contained.
However, we have made an effort to quote explicitly all results and techniques we are using,
so the paper should be accessible
to a novice reader.

\bigskip

\section{Definitions and basic results}\label{s:basic}

\subsection{Partitions and Young diagrams}\label{ss:basic-part}
We adopt the standard notation in combinatorics of partitions
and representation theory of~$S_n$, as well as the theory of symmetric functions
(see e.g.~\cite{Mac,Sta}).

Let $\cP$ denote the set of integer partitions $\la = (\la_1,\la_2,\ldots)$.  We write
$|\la|=n$ and $\la\vdash n$, for $\la_1+\la_2+\ldots = n$. Let $\cP_n$ the set of
all $\la\vdash n$, and let $\parti(n)=|\cP_n|$ the number of partitions of~$n$.
We use $\ell(\la)$ to denote the number of parts of~$\la$, and $\la'$ to denote
the conjugate partition.  Let $[\la]$ denotes the \emph{Young diagram} of partition~$\la$.
We write $\la \ssu \mu$ if $[\la] \ssu [\mu]$.  Similarly, we use $\la \cap \mu$
and $\la \cup \mu$ to indicate corresponding operations on Young diagrams.

Define addition of partitions $\al,\be \in \cP$ to be their addition as vectors:
$$\alpha+\beta \. = \. (\alpha_1+\beta_1, \ts \alpha_2+\beta_2,\ts \ldots)\ts.$$
Let $u=(i,j)$ be a square in~$[\al]$, $1\le i\le \ell(\al)$, $1\le j \le \la_i$.
Denote by $\rc(u) = j-i$ the \emph{content} and by
$\rh(u) = \lambda_i-j +\lambda'_j-i+1$ the \emph{hook length} of~$u$.

\subsection{Irreducible characters} \label{ss:basic-char}
We use $\chi^\la$ to denote the character of the irreducible representation
of $S_n$ corresponding to~$\la$.
Recall the \emph{hook length formula}
for the dimension:
\begin{equation}\label{eq:hlf}
f^\la \, = \, \chi^\la(1^n) \, = \, \frac{n!}{\prod_{u\in [\la]}\.\rh(u)}\..
\end{equation}

Similarly, for the dimension $d_\la(m)$ of the irreducible representation of $\GL_m(\mathbb{C})$
corresponding to~$\la$, $\ell(\la) \le m$, we have the \emph{hook content formula}
(see e.g.~\cite[\S 7.21.4]{Sta})
\begin{equation}\label{eq:hcf}
d_\la(m) \, = \, s_\la(1^m) \, = \, \prod_{u \in \la} \. \frac{ m+c(u)}{h(u)}\..
\end{equation}

\subsection{Kronecker coefficients}\label{ss:basic-kron}
It is well known that
$$
g(\la,\mu,\nu) \, = \, \frac{1}{n!} \. \sum_{\omega \in S_n} \. \chi^\la(\omega)\ts \chi^\mu(\omega)\ts \chi^\nu(\omega)\ts.
$$
This implies that Kronecker coefficients have full $S_3$ group of symmetry:
$$
g(\la,\mu,\nu) \, = \, g(\mu,\la,\nu) \, = \, g(\la,\nu,\mu) \, = \, \ldots 
$$
In addition, recall that $\chi^\la \otimes \sign = \chi^{\la'}$, where the \ts
``$\sign$'' \ts denotes character corresponding to partition~$(1^n)$.  This implies
\begin{equation}\label{eq:kron-conj}
g(\la,\mu,\nu) \, = \, g(\la',\mu',\nu) \, = \, g(\la',\mu,\nu') \, = \, g(\la,\mu',\nu')\ts.
\end{equation}
Another useful formula is given by the \emph{generalized Cauchy identity} (see e.g. \cite[Ex. 7.78(f)]{Sta})
\begin{equation}\label{cauchy}
\sum_{\la,\ts\mu,\ts\nu} \, g(\la,\mu,\nu)\. s_{\la}(x)\ts s_{\mu}(y)\ts s_{\nu}(z)
\, = \, \prod_{i,j,k} \. \frac{1}{1-x_i\ts y_j\ts z_k}\.,
\end{equation}
where the summations is over all triples of partitions of the same size $|\la|=|\mu|=|\nu|$.
This formula was used in~\cite{PP_c} to obtain the following alternating sign formula
(see also~\cite{CDW} where a similar formula was found).

Let $\al,\be,\ga \vdash n$ be partitions with lengths $\ell(\la)=a$, $\ell(\mu)=b$, and $\ell(\nu)=c$.
Denote by $\CC(\alpha,\beta,\gamma)$
the number of $3$-dimensional \emph{contingency arrays} of size $[a\times b \times c]$,
with $2$-dimensional marginal sums $\alpha,\beta,\gamma$.  Similarly, denote 
by $\CC^\ast(\alpha,\beta,\gamma)$ the number of such arrays with \ts $0$--$1$~entries. 

\begin{thm}[\cite{PP_c}]  \label{t:cont}
Let $\la,\mu,\nu \vdash n$ be partitions with lengths $\ell(\la)=a$, $\ell(\mu)=b$, and $\ell(\nu)=c$.
Denote by $\delta_k=(k-1,\ldots,1,0)$, a partition of $\binom{k}{2}$.  Then:
$$g(\la,\mu,\nu) \, =\. \sum_{\si \in S_a,\.\om \in S_b,\.\pi \in S_c} \sgn(\si,\om,\pi) \cdot \CC\bigl(\la+\delta_a-\si\cdot\de_a,\mu+\delta_b-\om\cdot\de_b,\nu+\delta-\pi\cdot\de_c\bigr),$$
where $\sgn(\si,\om,\pi) \in \{\pm 1\}$ is the product of signs of three permutations.
\end{thm}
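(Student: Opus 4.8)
The plan is to recover $g(\la,\mu,\nu)$ as the coefficient of a single monomial extracted from the generalized Cauchy identity~\eqref{cauchy}, after clearing the denominators on the right by multiplying through with Vandermonde determinants and converting the Schur functions on the left into alternants.

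First I would pass to finitely many variables: set $x=(x_1,\dots,x_a)$, $y=(y_1,\dots,y_b)$, $z=(z_1,\dots,z_c)$ and specialize all the remaining variables in~\eqref{cauchy} to $0$. Since $s_\al(x_1,\dots,x_a)=0$ whenever $\ell(\al)>a$, and likewise for $y$ and $z$, this leaves
\[
\sum_{\ell(\al)\le a,\ \ell(\be)\le b,\ \ell(\ga)\le c}\!\! g(\al,\be,\ga)\, s_\al(x)\, s_\be(y)\, s_\ga(z)\;=\;\prod_{i=1}^{a}\prod_{j=1}^{b}\prod_{k=1}^{c}\frac{1}{1-x_iy_jz_k}\,.
\]
Expanding the right-hand side as $\sum_M\prod_{i,j,k}(x_iy_jz_k)^{M_{ijk}}$ over all arrays $M=(M_{ijk})$ of nonnegative integers of shape $a\times b\times c$ and grouping the terms by the marginals $\al_i=\sum_{j,k}M_{ijk}$, $\be_j=\sum_{i,k}M_{ijk}$, $\ga_k=\sum_{i,j}M_{ijk}$, one reads off directly from the definition that the coefficient of $x^\al y^\be z^\ga$ on the right is $\CC(\al,\be,\ga)$, with the convention that $\CC(\al,\be,\ga)=0$ when some entry of $\al,\be,\ga$ is negative or the three vectors do not all have the same sum.

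Next I would multiply both sides by $\De_a(x)\De_b(y)\De_c(z)$, where $\De_a(x):=\prod_{1\le i<i'\le a}(x_i-x_{i'})=\sum_{\si\in S_a}\sgn(\si)\,x^{\si\cdot\de_a}$, writing $(\si\cdot v)_i:=v_{\si(i)}$ and $x^u:=x_1^{u_1}\cdots x_a^{u_a}$, and similarly for $y$ and $z$. On the left the bialternant formula $s_\al(x)\De_a(x)=\det\!\big(x_i^{\al_j+a-j}\big)_{i,j=1}^{a}=\sum_{\si\in S_a}\sgn(\si)\,x^{\si\cdot(\al+\de_a)}$ turns each Schur product into a signed sum of monomials. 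The final step is to read off, on both sides, the coefficient of the single monomial $x^{\la+\de_a}\,y^{\mu+\de_b}\,z^{\nu+\de_c}$. On the left, since $\la+\de_a$ is strictly decreasing, $x^{\la+\de_a}$ can occur in $\sum_{\si}\sgn(\si)x^{\si\cdot(\al+\de_a)}$ only for $\al=\la$ and $\si=\id$ — any rearrangement of $\al+\de_a$ that is weakly decreasing equals $\la+\de_a$, which forces $\al=\la$, and then $\si$ fixes a vector with distinct entries — so the coefficient is $g(\la,\mu,\nu)$. On the right, expanding the three Vandermonde sums and using that $[x^\al y^\be z^\ga]\prod_{i,j,k}(1-x_iy_jz_k)^{-1}=\CC(\al,\be,\ga)$, the coefficient of the same monomial is
\[
\sum_{\si\in S_a,\ \om\in S_b,\ \pi\in S_c}\sgn(\si)\,\sgn(\om)\,\sgn(\pi)\;\CC\big(\la+\de_a-\si\cdot\de_a,\ \mu+\de_b-\om\cdot\de_b,\ \nu+\de_c-\pi\cdot\de_c\big)\,.
\]
Equating the two expressions is exactly the asserted formula.

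I expect the only genuinely delicate point to be the uniqueness in this last coefficient extraction: one must check that on the left no triple $(\al,\be,\ga)\neq(\la,\mu,\nu)$ and no nontrivial triple of permutations contributes to $x^{\la+\de_a}y^{\mu+\de_b}z^{\nu+\de_c}$. This reduces to the standard facts that distinct partitions $\al$ with $\ell(\al)\le a$ give alternants supported on disjoint sets of exponent vectors, and that $\la+\de_a$ has pairwise distinct parts and is hence fixed in $S_a$ only by the identity; granting these, everything else is routine manipulation of formal power series. The summands on the right for which $\la+\de_a-\si\cdot\de_a$ (or a $\mu$- or $\nu$-analogue) has a negative entry contribute nothing, since the corresponding $\CC$ vanishes by convention, so the formula is unaffected.
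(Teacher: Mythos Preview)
The paper does not actually prove this theorem; it is quoted as a result of~\cite{PP_c}, with the sentence ``This formula was used in~\cite{PP_c} to obtain the following alternating sign formula'' immediately after stating the generalized Cauchy identity~\eqref{cauchy}. Your argument is correct and is exactly the proof one expects from that hint: specialize~\eqref{cauchy} to finitely many variables, multiply through by the three Vandermonde determinants so that each $s_\al(x)$ becomes an alternant, and extract the coefficient of the strictly decreasing monomial $x^{\la+\de_a}y^{\mu+\de_b}z^{\nu+\de_c}$. The uniqueness check you flag is indeed the only point requiring care, and your justification (that $\al+\de_a$ has pairwise distinct parts for any partition $\al$ with $\ell(\al)\le a$, so a permutation of it is strictly decreasing only for $\si=\id$, forcing $\al=\la$) is the standard one and is complete.
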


Here $\si \cdot \al$ denotes the permutation $(\alpha_{\si(1)},\ldots,\al_{\si(k)})$ of the parts of  the partition $\al=(\al_1,\ldots,\al_k)$ according
to $\si \in S_k$.  Note that when $a,b,c=O(1)$, the theorem implies that the Kronecker coefficients
can be computed in polynomial time via Barvinok's algorithm~\cite{CDW,PP_c}.

\smallskip

The following \emph{Manivel's inequality} is given in~\cite{Man}. It can be viewed as an effective
version of the \emph{semigroup property} (see~\cite{CHM,Zel}).

\begin{thm}[\cite{Man}]  \label{t:manivel}
Suppose $\la,\mu,\nu,\al,\be,\ga$ are partitions of~$n$, such that the Kronecker coefficients
$\ts g(\la,\mu,\nu), \ts g(\al,\be,\ga)> 0$. Then:
$$g(\la+\al,\mu+\be,\nu+\ga)\. \geq \. \max\ts\bigl\{ g(\la,\mu,\nu), \ts g(\al,\be,\ga)\bigr\}\ts.$$
\end{thm}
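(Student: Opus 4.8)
The plan is to realize each Kronecker coefficient as a multiplicity inside a polynomial ring and then invoke the elementary fact that multiplication by a nonzero element of an integral domain is injective. Put $n=|\la|=\dots=|\ga|$ and fix integers $a\ge\max\{\ell(\la),\ell(\al)\}$, $b\ge\max\{\ell(\mu),\ell(\be)\}$, $c\ge\max\{\ell(\nu),\ell(\ga)\}$ (so that also $a\ge\ell(\la+\al)$, etc.). Consider
$$R \, = \, \text{Sym}\bigl(\cc^a\otimes\cc^b\otimes\cc^c\bigr)\ts,$$
a polynomial ring in $abc$ variables, graded by total degree and carrying the natural action of $G=\GL_a(\cc)\times\GL_b(\cc)\times\GL_c(\cc)$. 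Each graded piece $\text{Sym}^N(\cc^a\otimes\cc^b\otimes\cc^c)$ is a finite-dimensional, hence semisimple, $G$-module, and the character of $R$ is exactly the right-hand side of the generalized Cauchy identity~\eqref{cauchy} with the alphabets truncated to $x_1,\dots,x_a$, $y_1,\dots,y_b$, $z_1,\dots,z_c$. Comparing with the left-hand side of~\eqref{cauchy}, and using that the surviving Schur polynomials in each truncated alphabet are linearly independent, one reads off that whenever $\ell(\la)\le a$, $\ell(\mu)\le b$, $\ell(\nu)\le c$ the integer $g(\la,\mu,\nu)$ equals the multiplicity of the irreducible $G$-module $W_\la\boxtimes W_\mu\boxtimes W_\nu$ in $R$, occurring necessarily inside the component $\text{Sym}^n(\cc^a\otimes\cc^b\otimes\cc^c)$.

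Next I would pass to highest weight vectors. Let $U\subset G$ be the product of the three groups of unipotent upper-triangular matrices. Since $U$ acts on $R$ by algebra automorphisms, the space $R^U$ of $U$-invariants is a subring of $R$; it decomposes into weight spaces $(R^U)_{(\la,\mu,\nu)}$ for the maximal torus, and by the standard description of highest weight vectors $\dim(R^U)_{(\la,\mu,\nu)}$ equals the $G$-multiplicity identified above, namely $g(\la,\mu,\nu)$. Moreover this weight space lies in $\text{Sym}^n$, and weights add under multiplication in $R^U$.

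The key step is now immediate. As a polynomial ring $R$ is an integral domain, hence so is the subring $R^U$. Since $g(\al,\be,\ga)>0$ there is a nonzero $w_0\in(R^U)_{(\al,\be,\ga)}$, and multiplication by $w_0$ defines a linear map
$$(R^U)_{(\la,\mu,\nu)} \, \longrightarrow \, (R^U)_{(\la+\al,\ts\mu+\be,\ts\nu+\ga)}\ts,$$
which is injective because $R$ has no zero divisors. Therefore
$$g(\la+\al,\mu+\be,\nu+\ga) \, = \, \dim(R^U)_{(\la+\al,\mu+\be,\nu+\ga)} \, \ge \, \dim(R^U)_{(\la,\mu,\nu)} \, = \, g(\la,\mu,\nu)\ts.$$
Interchanging the two triples — multiplying instead by a nonzero element of $(R^U)_{(\la,\mu,\nu)}$, which exists since $g(\la,\mu,\nu)>0$ — gives the same bound with $g(\al,\be,\ga)$ on the right; taking the maximum finishes the proof. (In particular this argument recovers the semigroup property: positivity of both coefficients forces positivity of the sum.)

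I expect the only points needing care to be bookkeeping ones: fixing $a,b,c$ large enough and verifying that coefficient extraction from~\eqref{cauchy} in finitely many variables still returns $g(\la,\mu,\nu)$ (linear independence of the nonvanishing Schur polynomials), together with quoting the standard identification of $\dim(R^U)_{(\la,\mu,\nu)}$ with the $G$-multiplicity and the fact that $R^U$ is graded by weight. The one genuinely substantive observation — and it is a single line — is that everything takes place inside the integral domain $R$, so multiplication by a nonzero highest weight vector cannot kill anything; this is exactly what promotes the bare existence of products (the semigroup property) to the claimed inequality of multiplicities.
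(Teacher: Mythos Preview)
The paper does not supply its own proof of this theorem: it is quoted from~\cite{Man} as a tool in the preliminaries (Section~\ref{ss:basic-kron}) and used later without further justification. So there is nothing in the paper to compare against beyond the citation.

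Your argument is correct and is, in fact, essentially Manivel's own proof. The identification of $g(\la,\mu,\nu)$ with the dimension of the highest-weight space $(R^U)_{(\la,\mu,\nu)}$ in $R=\text{Sym}(\cc^a\otimes\cc^b\otimes\cc^c)$ is exactly the content of the generalized Cauchy identity~\eqref{cauchy} read as a $\GL_a\times\GL_b\times\GL_c$ character, and the injectivity of multiplication by a nonzero highest-weight vector in the integral domain $R$ is the one-line observation that upgrades the semigroup property to the multiplicity inequality. The bookkeeping points you flag (choosing $a,b,c$ large enough, linear independence of the surviving Schur polynomials, $R^U$ being a weight-graded subring) are all standard and pose no difficulty. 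Nothing is missing.
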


\smallskip

\subsection{Partition asymptotics}\label{ss:basic-asympt}
Denote by \ts $\parti'(n) = \parti(n) - \parti(n-1)$ the number of partitions into
parts~$\ge 2$. Recall that $\parti'(n)\ge 1$ for all $n\ge 2$, and the following
\emph{Hardy--Ramanujan} and \emph{Roth--Szekeres} formulas,
respectively:
\begin{equation} \label{HR asymptotics}
 \parti(n) \. \sim \. \frac{1}{4\sqrt{3}\ts n} \,\, e^{\pi \sqrt{\frac{2}{3}\ts n}} \,,\quad
  \parti'(n) \. \sim \. \frac{\pi}{\sqrt{6\ts n}} \,\parti(n)  \, \ \quad \text{as } \ n\to\infty\.,
 \end{equation}
see~\cite{RS} (see also~\cite[p.~59]{ER}).

Denote by $b_k(n)$ the number of partitions of $k$ into distinct odd parts~$\le 2n-1$.  We have:
$$
\prod_{i=1}^{n}\. \ts \bigl(1+q^{2i-1}\bigr)\ \. = \, \, \sum_{k=0}^{n^2} \, \. b_k(n) \. q^k\ts.
$$
\begin{thm}[Almkvist] \label{t:almkvist-unimod}
The following sequence is symmetric and unimodal:
$$
(\lozenge)\quad
b_{2}(n)\ts, \, b_{3}(n)\ts, \, \ldots \,, \, b_{n^2-2}(n)\ts.
$$
\end{thm}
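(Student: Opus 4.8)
The plan is to deduce Almkvist's theorem from the Two Coefficients Lemma together with Theorem~\ref{t:manivel} and the nonnegativity of Kronecker coefficients, exactly in the spirit of the known Kronecker-coefficient proof of strict unimodality. First I would observe that the product $\prod_{i=1}^n (1+q^{2i-1})$ is, up to a power of $q$, the $q$-binomial coefficient's ``odd'' analogue; more precisely, a classical substitution shows that $b_k(n)$ differs from $p_k(n,n)$ only by a shift, since $\prod_{i=1}^n(1+q^{2i-1}) = \prod_{i=1}^n \frac{1-q^{4i-2}}{1-q^{2i-1}}$ and, after the standard manipulation, $\sum_k b_k(n) q^k = q^{-\binom{n}{2}}$ times a Gaussian binomial in the variable $q^2$. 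Concretely I would pin down the identity $b_k(n) = p_{k-?}(n,n)$ with an explicit shift, so that differences $b_k(n) - b_{k-2}(n)$ become differences of the form $p_j(n,n) - p_{j-1}(n,n)$.

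Once that translation is in place, the Two Coefficients Lemma gives $b_k(n) - b_{k-2}(n) = g(n^n, n^n, \tau_j)$ for the appropriate two-row partition $\tau_j = (n^2-j, j)$ in the relevant range of $k$, and this Kronecker coefficient is nonnegative by definition, which is precisely the statement that the sequence $(\lozenge)$ is unimodal on the indices where the reduction to $g(n^n,n^n,\tau_j)$ is valid. Symmetry of $(\lozenge)$ is immediate and classical: the generating polynomial $\prod_{i=1}^n(1+q^{2i-1})$ is palindromic of degree $n^2$ because replacing each factor $1+q^{2i-1}$ by $q^{2i-1}(1+q^{-(2i-1)})$ and summing the exponents $\sum_{i=1}^n (2i-1) = n^2$ shows $b_k(n) = b_{n^2-k}(n)$. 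So I would first record symmetry directly, then reduce unimodality to nonnegativity of Kronecker coefficients via the lemma.

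The step I expect to be the main obstacle is bookkeeping the index shift and the exact range: the Two Coefficients Lemma is stated for $p_k(\ell,m) - p_{k-1}(\ell,m)$ with $\ell = m = n$ and $1 \le k \le n^2/2$, so I need to verify that the truncation to $k$ from $2$ to $n^2-2$ in $(\lozenge)$ lines up with a range of $j$ for which $\tau_j$ is a genuine partition and the lemma applies, and I must make sure the parity of the shift (the $q \mapsto q^2$ substitution means only even-index differences of $b$ correspond to consecutive differences of $p$) is handled correctly so that consecutive terms $b_{k-1}(n) \le b_k(n)$ for all $k$ up to the center, not merely those of one parity. If the direct substitution produces only $b_k(n) - b_{k-1}(n)$ at half-integer Gaussian indices, I would instead invoke the refinement that $\sum_k b_k(n) q^k$ equals a principal specialization that is itself a single $q$-binomial coefficient in $q$ (not $q^2$) after absorbing the shift, namely use $\prod_{i=1}^n(1+q^{2i-1}) = \frac{(q^2;q^2)_{2n}}{(q^2;q^2)_n (q;q^2)_n \cdots}$-type identities to land on $\binom{2n}{n}_q$ up to a monomial factor; in that formulation consecutive differences of $b$ become $g(n^n,n^n,\tau_j) \ge 0$ outright, and Almkvist's theorem follows. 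Thus the real content is the elementary-but-delicate identification of $(\lozenge)$ with a $q$-binomial coefficient, after which everything reduces to results already available in the excerpt.
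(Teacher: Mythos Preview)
Your approach has a genuine gap at its central step: the identification of $\prod_{i=1}^n(1+q^{2i-1})$ with a $q$-binomial coefficient up to a monomial shift is simply false. There is no identity of the form $b_k(n)=p_{k-c}(n,n)$ for any shift~$c$. For instance, at $n=3$ the coefficient sequence of $\prod_{i=1}^3(1+q^{2i-1})$ is $1,1,0,1,1,1,1,0,1,1$, while that of $\binom{6}{3}_q$ is $1,1,2,3,3,3,3,2,1,1$; no shift makes these agree. The manipulation $1+q^{2i-1}=(1-q^{4i-2})/(1-q^{2i-1})$ is correct but does not produce a Gaussian binomial, and the vaguer ``$(q^2;q^2)_{2n}/\cdots$'' identity you gesture at does not exist in the form you need. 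Consequently the Two Coefficients Lemma, which gives $g(n^n,n^n,\tau_k)=p_k(n,n)-p_{k-1}(n,n)$, never produces $b_k(n)-b_{k-1}(n)$, and your reduction to nonnegativity of Kronecker coefficients collapses.

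In fact the paper's logic runs in the opposite direction from what you propose. Almkvist's theorem is not proved here; it is quoted from~\cite{A1} as a known analytic result (see the lemmas in~$\S$\ref{ss:qbin-alm}, which sketch Almkvist's original argument via a recurrence and an asymptotic estimate for $k$ near $n^2/2$, completed by a finite check). The paper then \emph{uses} Almkvist's theorem as an input: in the proof of Stanley's Theorem~\ref{t:stanley-unimod} one shows via the character bound (Theorem~\ref{t:char_effective}) and the Murnaghan--Nakayama rule that
\[
p_k(n,n)-p_{k-1}(n,n)\;=\;g(n^n,n^n,\tau_k)\;\ge\;\bigl|\chi^{\tau_k}[\wh{\mu}]\bigr|\;=\;\bigl|b_k(n)-b_{k-1}(n)\bigr|,
\]
and Almkvist's theorem is what lets one drop the absolute value. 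So the differences $b_k(n)-b_{k-1}(n)$ arise as \emph{character values}, not as Kronecker coefficients, and their nonnegativity is an independent analytic fact, not a consequence of the Kronecker machinery.
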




\bigskip

\section{General upper bounds}\label{s:gen}

In this section we present several natural upper bounds on
Kronecker coefficients some of which we use later in the paper.
While the proofs are not difficult, we were unable to locate
them anywhere in the literature.

\subsection{Bounds via dimension} \label{ss:gen-dim}
By definition of Kronecker coefficients~\eqref{eq:kron-def},
we have the following trivial upper bound.\footnote{Note that a superficially
similar bound also holds for LR coefficients ($\S$\ref{ss:gen-LR}).}

\begin{prop} \label{p:upper-dim}
For every $\la, \mu, \nu \vdash n$, we have:
$$(\triangledown) \qquad
g(\la,\mu,\nu) \, \leq \, \frac{f^{\la}\. f^{\mu}}{f^{\nu}}\,.
$$
\end{prop}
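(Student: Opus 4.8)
The plan is to bound the multiplicity $g(\la,\mu,\nu)$ by viewing it as the multiplicity of $\chi^\nu$ inside the tensor product $\chi^\la \otimes \chi^\mu$. Since $g(\la,\mu,\nu)$ counts the number of copies of the irreducible $S_n$-module $V^\nu$ appearing in $V^\la \otimes V^\mu$, and distinct irreducible constituents occupy linearly independent subspaces, the total dimension of the copies of $V^\nu$ cannot exceed the dimension of the whole tensor product. That is,
$$
g(\la,\mu,\nu)\cdot f^\nu \, \le \, \dim\bigl(V^\la \otimes V^\mu\bigr) \, = \, f^\la\, f^\mu\.,
$$
and dividing by $f^\nu = \chi^\nu(1^n) > 0$ gives the claimed inequality $(\triangledown)$.

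Alternatively, one can phrase the same argument purely in terms of characters without invoking the module decomposition directly: evaluating the defining identity~\eqref{eq:kron-def} at the identity element $1^n$ yields
$$
f^\la\, f^\mu \, = \, \chi^\la(1^n)\,\chi^\mu(1^n) \, = \, \sum_{\nu \vdash n} g(\la,\mu,\nu)\, \chi^\nu(1^n) \, = \, \sum_{\nu \vdash n} g(\la,\mu,\nu)\, f^\nu\.,
$$
and since every term on the right is nonnegative (the Kronecker coefficients are nonnegative integers and the dimensions $f^\nu$ are positive), each individual term is at most the total: $g(\la,\mu,\nu)\, f^\nu \le f^\la\, f^\mu$. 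Rearranging gives $(\triangledown)$.

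There is essentially no obstacle here — the statement is the "trivial upper bound" flagged in the text, and both formulations above are a one-line consequence of nonnegativity of the Kronecker coefficients together with the hook length formula~\eqref{eq:hlf} (which one may optionally substitute to make the bound fully explicit in terms of hook lengths). The only thing worth a remark is that positivity of $f^\nu$ is needed to divide, which is immediate. If anything, the mild subtlety is purely expository: deciding whether to present the representation-theoretic version (dimension count in $V^\la\otimes V^\mu$) or the character-evaluation version; I would include the short character computation since it keeps the argument self-contained and ties directly to~\eqref{eq:kron-def}.
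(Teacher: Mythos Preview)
Your proposal is correct and matches the paper's own reasoning: the paper states the bound as a trivial consequence of the defining identity~\eqref{eq:kron-def}, which is exactly your character-evaluation argument (and equivalently your dimension count). There is nothing to add.
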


Now the r.h.s.~can be computed via the hook length formula~\eqref{eq:hlf}.
By the symmetry of Kronecker coefficients, we have a surprisingly simple formula,
see e.g.~\cite[Exc.~7.83]{Sta}. 

\begin{cor} \label{c:upper-min}
For every $\la, \mu, \nu \vdash n$, we have \.
$g(\la,\mu,\nu) \ts \leq \ts \min\bigl\{f^\la,\ts f^{\mu},\ts f^{\nu}\bigr\}$.
\end{cor}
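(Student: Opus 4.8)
The plan is to deduce Corollary~\ref{c:upper-min} directly from Proposition~\ref{p:upper-dim} together with the full $S_3$-symmetry of the Kronecker coefficients. The starting point is the observation that the bound $(\triangledown)$ is not symmetric in $\la,\mu,\nu$ as written, but the coefficient $g(\la,\mu,\nu)$ is. So one first applies $(\triangledown)$ in all three ``rotations'' allowed by the symmetry $g(\la,\mu,\nu)=g(\mu,\nu,\la)=g(\nu,\la,\mu)$, obtaining the three inequalities
$$
g(\la,\mu,\nu)\,\le\,\frac{f^\la f^\mu}{f^\nu},\qquad
g(\la,\mu,\nu)\,\le\,\frac{f^\mu f^\nu}{f^\la},\qquad
g(\la,\mu,\nu)\,\le\,\frac{f^\nu f^\la}{f^\mu}.
$$

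Next I would multiply these three inequalities together. Since all quantities involved are nonnegative (dimensions $f^\la,f^\mu,f^\nu$ are positive integers, and $g\ge 0$), multiplication is legitimate, and the right-hand side telescopes: $\frac{f^\la f^\mu}{f^\nu}\cdot\frac{f^\mu f^\nu}{f^\la}\cdot\frac{f^\nu f^\la}{f^\mu}=f^\la f^\mu f^\nu$. This gives $g(\la,\mu,\nu)^3\le f^\la f^\mu f^\nu$. That is already a clean symmetric bound, but to get the sharper $\min$ statement one argues slightly differently: from the three displayed inequalities, pick the one whose right-hand side is smallest. If, say, $f^\nu=\max\{f^\la,f^\mu,f^\nu\}$, then the first inequality gives $g(\la,\mu,\nu)\le f^\la f^\mu/f^\nu\le f^\la$ and also $\le f^\mu$ (using $f^\mu/f^\nu\le 1$ and $f^\la/f^\nu\le 1$ respectively); combined with $f^\nu$ being the largest, this yields $g\le\min\{f^\la,f^\mu,f^\nu\}$ in that case. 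The two remaining cases (where $f^\la$ or $f^\mu$ is the maximum) are handled identically by cyclic relabeling, so without loss of generality one may just assume one ordering.

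The only genuinely substantive input beyond bookkeeping is Proposition~\ref{p:upper-dim} itself, which is already granted, and the $S_3$-symmetry, which was recorded in $\S$\ref{ss:basic-kron}. So there is no real obstacle here; the ``hard part'' is purely a matter of organizing the case analysis cleanly — specifically, making sure that when one divides by $f^\nu$ one is dividing by the largest of the three, so that the factor $f^\la f^\mu/f^\nu$ is bounded above by whichever of $f^\la$ or $f^\mu$ is relevant. I would write it as: let $f^\nu$ be the maximum (WLOG by symmetry of $g$); then $g(\la,\mu,\nu)\le f^\la f^\mu/f^\nu\le\min\{f^\la,f^\mu\}=\min\{f^\la,f^\mu,f^\nu\}$, the last equality because $f^\nu$ is the largest. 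This is a two-line proof once the setup is in place.
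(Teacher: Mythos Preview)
Your proof is correct and matches the paper's approach: the corollary is deduced from Proposition~\ref{p:upper-dim} together with the $S_3$-symmetry of $g(\la,\mu,\nu)$, exactly as you do (the paper only sketches this, citing~\cite[Exc.~7.83]{Sta}). The digression about multiplying the three inequalities to get $g^3\le f^\la f^\mu f^\nu$ is unnecessary but harmless, since you correctly abandon it in favor of the direct WLOG argument.
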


\smallskip

In particular, for $n=\ell\ts m$, $\la=(m^\ell)$ and $\mu=\tau_k=(\ell\ts m-k,k)$,
$k\le n/2$, we have:
$$
g(m^\ell, m^\ell, \tau_k) \, \le \. f^{\tau_k} \. = \, \binom{n}{k} \ts - \ts \binom{n}{k-1} \, \le \, \frac{n^k}{k!}\..
$$
More specifically, for $n=2\ts k$, we have
\begin{equation}\label{eq:upper-cat}
g(m^\ell, m^\ell, \tau_k) \, \le \,  \frac{1}{k+1}\ts \binom{2\ts k}{k} \, \sim \, \frac{1}{\sqrt{\pi} \. k^{3/2}} \, 4^k\ts.
\end{equation}

\subsection{Bounds via Schur functions}   \label{ss:gen-schur}
We can also bound the Kronecker coefficients via the Schur-Weyl duality and the hook-content formula~\eqref{eq:hcf} for dimension of the irreducible $\GL_\ell$-modules (cf.~$\S$\ref{ss:gen-dim} above).  Here we use the Schur function language.

\begin{prop}\label{p:upper-GL}
Let $\la,\mu,\nu \vdash n$, $\ell(\la)=a$, $\ell(\mu)=b$  and \ts $\ell(\nu)=c$.  Then:
$$(\star) \qquad
g(\la,\mu,\nu) \, \leq \, \frac{d_\nu(ab)}{d_\la(a)\.d_\mu(b)} \..$$
\end{prop}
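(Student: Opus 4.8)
The plan is to reduce $(\star)$ to the classical decomposition of the Schur functor of a tensor product as a $\GL_a\times\GL_b$-module, together with nonnegativity of the Kronecker coefficients. Concretely, set $V=\cc^a$ and $W=\cc^b$, so that $V\otimes W\cong\cc^{ab}$. Applying Schur--Weyl duality for $\GL_{ab}\times S_n$ to $(V\otimes W)^{\otimes n}$ identifies, in the $\chi^\nu$-isotypic component, the $\GL_{ab}$-module $\mathbb{S}_\nu(V\otimes W)$; on the other hand, applying Schur--Weyl duality for $\GL_a\times S_n$ and for $\GL_b\times S_n$ separately and then using $\chi^\la\otimes\chi^\mu=\bigoplus_\nu g(\la,\mu,\nu)\ts\chi^\nu$ identifies the same isotypic component with $\bigoplus_{\la,\mu}g(\la,\mu,\nu)\,\mathbb{S}_\la(V)\otimes\mathbb{S}_\mu(W)$. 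Hence, as $\GL_a\times\GL_b$-modules,
$$\mathbb{S}_\nu(V\otimes W) \, \cong \, \bigoplus_{\la,\mu}\, g(\la,\mu,\nu)\,\, \mathbb{S}_\la(V)\otimes\mathbb{S}_\mu(W)\ts.$$
(Alternatively, this identity drops out of the generalized Cauchy identity~\eqref{cauchy} compared with the ordinary Cauchy identity in the variable sets $(x_iy_j)_{i,j}$ and $(z_k)_k$, after matching coefficients of the linearly independent Schur functions $s_\nu(z)$.)

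Next I would take dimensions, using the hook--content formula~\eqref{eq:hcf}: $\dim\mathbb{S}_\nu(V\otimes W)=s_\nu(1^{ab})=d_\nu(ab)$, while $\dim\mathbb{S}_\la(V)=s_\la(1^a)=d_\la(a)$ and $\dim\mathbb{S}_\mu(W)=s_\mu(1^b)=d_\mu(b)$. This produces the exact identity
$$d_\nu(ab) \, = \, \sum_{\la,\mu}\, g(\la,\mu,\nu)\, d_\la(a)\, d_\mu(b)\ts.$$

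Finally, every summand on the right is nonnegative, so the single term indexed by the prescribed $\la,\mu$ satisfies $g(\la,\mu,\nu)\,d_\la(a)\,d_\mu(b)\le d_\nu(ab)$. Since $\ell(\la)=a$ forces $d_\la(a)=s_\la(1^a)>0$, and likewise $\ell(\mu)=b$ forces $d_\mu(b)>0$, dividing through yields $(\star)$; in the degenerate case $\ell(\nu)>ab$ the right-hand side is $0$ and the displayed identity then forces $g(\la,\mu,\nu)=0$, so $(\star)$ holds there as well. I do not expect a real obstacle: the decomposition identity is standard, and everything after it is positivity plus a dimension count. The one point to keep straight is that the relevant $\GL$-dimension on top is $d_\nu(ab)$ and not $d_\nu$ of some other rank — i.e. in the Schur-functor identity (or, in the symmetric-function route, in the specialization $x\to(1^a),\,y\to(1^b)$) the product set $(x_iy_j)$ has exactly $ab$ entries.
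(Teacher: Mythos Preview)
Your proof is correct and is essentially the same as the paper's: the paper derives the identity $s_\nu(xy)=\sum_{\la,\mu} g(\la,\mu,\nu)\,s_\la(x)\,s_\mu(y)$ by comparing the generalized Cauchy identity~\eqref{cauchy} with the ordinary Cauchy identity in the variables $(x_iy_j)$ and $z$, then specializes $x=(1^a)$, $y=(1^b)$ and drops all but one nonnegative term --- exactly your ``alternative'' route, with your Schur--Weyl formulation being the module-theoretic restatement of the same identity. The only cosmetic difference is that the paper records the intermediate inequality $g(\la,\mu,\nu)\le s_\nu(xy)/\bigl(s_\la(x)\,s_\mu(y)\bigr)$ for arbitrary nonnegative $x,y$ before specializing, whereas you pass directly to dimensions.
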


\begin{proof}
First, the generalized Cauchy identity~\eqref{cauchy} can be reinterpreted using the (usual) Cauchy identity with the variables $z$ and $u=xy=(x_1y_1,x_1y_2,\ldots,x_2y_1,\ldots)$, as follows:
$$
\prod_{i,j,k} \. \frac{1}{1-x_iy_jz_k} \, = \prod_{i,r} \. \frac{1}{1-z_iu_r} \,=
\, \sum_{\al\in \cP} \. s_{\al}(z)\ts s_{\al}( xy)\..
$$
Comparing the left-hand side of~\eqref{cauchy}
 and the right-hand side above, and equating coefficients at $s_{\nu}(z)$, we see that
$$\sum_{\mu,\ts\nu} \, g(\la,\mu,\nu)\. s_\la(x)\ts s_\mu(y)\,\. = \, s_{\nu}(xy)\..
$$
Hence, for any nonnegative values of $x,y$, we have:
\begin{equation}\label{eq:kron-schur}
g(\la,\mu,\nu) \, \leq \, \frac{s_{\nu}(xy)}{s_{\la}(x)\ts s_{\mu}(y)}\..
\end{equation}
In particular, setting $x=(1^{a})$ and $y= (1^{b})$, we have $xy=(1^{ab})$.
Combining~\eqref{eq:kron-schur} and the first equality in~\eqref{eq:hcf},
we get the result.
\end{proof}

Now, the r.h.s. of~$(\star)$ can be computed via the hook-content formula~\eqref{eq:hcf}.
For example, for $n=\ell\ts m$, $\la=(m^\ell)$ we have $d_\la(\ell)=1$, and
$$
d_\la(N) \, = \, \frac{(N+m-1)! \cdots (N+m-\ell)! \.
(\ell-1)! \cdots 1!}{(N-1)!\cdots (N-\ell)!\. (m+\ell-1)! \cdots m!}\..
$$
For $\ell=2$, $\la = (m,m)$, the upper bound~$(\star)$ gives
\begin{equation}\label{eq:upper-schur}
g(\la,\ts \la, \ts \la) \, \le \, d_{m^2}(4) \, = \,
\frac{(m+3)! \ts (m+2)!}{3! \ts 2! \ts (m+1)! \ts m!} \, \sim \, \frac{m^4}{12}
\ts,
\end{equation}
which is much smaller than the exponential bound~\eqref{eq:upper-cat}.
On the other hand, a similar explicit calculation for $\ell = m$ shows
that the bound~$(\triangledown)$ is much better than~$(\star)$ in that case.
More generally, we have the following compact formula for an explicit upper bound.

\begin{cor}\label{c:gen-upper-binom}
Let
$\la,\mu,\nu \vdash n$, $\ell(\la)=a$, $\ell(\mu)=b$  and \ts $\ell(\nu)=c$.   Then:
$$
g(\la,\mu,\nu) \; \leq \; \prod_{i=1}^c \binom{\nu_i-i+ab}{\nu_i}\ts .
$$
\end{cor}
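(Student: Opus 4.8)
The plan is to combine Proposition~\ref{p:upper-GL} with the hook-content formula~\eqref{eq:hcf}, and then simplify the resulting product into a product of binomial coefficients. We start from the bound $(\star)$, which reads
$$
g(\la,\mu,\nu) \, \leq \, \frac{d_\nu(ab)}{d_\la(a)\.d_\mu(b)}\,.
$$
Since $d_\la(a)$ and $d_\mu(b)$ are positive integers (dimensions of $\GL$-modules), they are at least~$1$, so it suffices to show that $d_\nu(ab) \le \prod_{i=1}^c \binom{\nu_i - i + ab}{\nu_i}$. Thus the whole problem reduces to establishing an upper bound on $s_\nu(1^{N})$, with $N = ab$, by a product of binomials indexed by the rows of~$\nu$.

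For the main estimate, I would write out $d_\nu(N) = \prod_{u \in \nu} \frac{N + c(u)}{h(u)}$ and group the product by rows. Fix row $i$ of $[\nu]$, so the squares are $u=(i,j)$ for $1 \le j \le \nu_i$; then $c(u) = j - i$, and the contents in this row run over $\{1-i, 2-i, \dots, \nu_i - i\}$. Hence the numerator contribution of row~$i$ is
$$
\prod_{j=1}^{\nu_i} \bigl(N + j - i\bigr) \, = \, \frac{(N+\nu_i-i)!}{(N-i)!}\,.
$$
For the denominator, the hook lengths $h(i,j)$ in row~$i$ are distinct positive integers, and the largest of them is the arm-plus-leg-plus-one at $j=1$, namely $h(i,1) = \nu_i - 1 + (\nu'_1 - i) + 1 = \nu_i + c - i \ge \nu_i$ — but more simply, since the $\nu_i$ hook lengths in a fixed row are distinct positive integers they include $1,2,\dots$ only up to $\nu_i$ at best, so $\prod_{j=1}^{\nu_i} h(i,j) \ge \nu_i!$. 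Combining, the row-$i$ factor of $d_\nu(N)$ is at most
$$
\frac{(N+\nu_i-i)!}{(N-i)!\,\nu_i!} \, = \, \binom{N+\nu_i-i}{\nu_i}\,,
$$
and multiplying over $i=1,\dots,c$ gives $d_\nu(ab) \le \prod_{i=1}^c \binom{\nu_i - i + ab}{\nu_i}$, which together with $d_\la(a), d_\mu(b) \ge 1$ yields the corollary.

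The one point requiring a little care — and the step I expect to be the main obstacle, though a mild one — is the claim $\prod_{j=1}^{\nu_i} h(i,j) \ge \nu_i!$. The hook lengths within a single row are strictly decreasing in~$j$ only if the diagram is such that the legs behave monotonically; in general they are merely \emph{distinct} positive integers (distinctness within a row follows because $h(i,j) - h(i,j+1) \ge 1$ whenever $(i,j+1) \in [\nu]$, as the arm drops by exactly one and the leg is non-increasing). A set of $\nu_i$ distinct positive integers has product at least $1 \cdot 2 \cdots \nu_i = \nu_i!$, which is all we need. One should also note $N - i \ge 0$ here: since $\ell(\nu) = c$ and $N = ab \ge a \ge \ell(\la) \ge$ — actually one only needs $N = ab \ge c$, which holds because $ab \ge \max(a,b) \ge$ — hmm, this needs $\nu \vdash n$ with $n = |\la|=|\mu|$ and $a = \ell(\la)$, $b=\ell(\mu)$: if $a = b = 1$ then $\la = \mu = (n)$ and $g(\la,\mu,\nu) = [\nu = (n)]$, and the bound is trivially fine; in all nontrivial cases $ab \ge c$ because the Kronecker coefficient vanishes unless $\ell(\nu) \le \ell(\la)\ell(\mu)$. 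I would invoke that standard bound on $\ell(\nu)$ (or simply interpret the factorials via Gamma functions / note the bound is vacuous otherwise) to guarantee $N - i \ge 0$, and the proof is complete.
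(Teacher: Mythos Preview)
Your proof is correct and follows essentially the same route as the paper: apply Proposition~\ref{p:upper-GL}, use $d_\la(a),d_\mu(b)\ge 1$, expand $d_\nu(ab)$ via the hook-content formula grouped by rows, and bound the product of hook lengths in each row below by $\nu_i!$. Your extra care in justifying that the hook lengths in a fixed row are distinct positive integers (in fact strictly decreasing, since $h(i,j)-h(i,j+1)=1+(\nu'_j-\nu'_{j+1})\ge 1$) and in worrying about $ab\ge c$ is more than the paper itself provides; note that when $c>ab$ the factor at $i=ab+1$ gives $\binom{\nu_i-1}{\nu_i}=0$ on the right while $g(\la,\mu,\nu)=0$ on the left, so the inequality still holds.
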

\begin{proof}
We have that $d_{\la}(a)\geq 1$, $d_{\mu}(b)\geq 1$. Applying the hook-content formula by collecting the products over boxes in row $i$, we have
$$d_{\nu}(ab) =\prod_{i=1}^c  \;  \frac{(\nu_i-i+ab)!}{(ab-i)!} \cdot \frac{1}{ \prod_{j=1}^{\nu_i}  (\nu_i-j+1 +\nu'_j-i)}.$$
In particular, the products of the hook-length in row $i$ are greater than $\nu_i!$, so
$$d_{\nu}(ab) \leq \prod_{i=1}^c \frac{(\nu_i-i+ab)!}{(ab-i)!} \frac{1}{\nu_i!}  =\prod_{i=1}^c \binom{\nu_i-i+ab}{\nu_i}.$$
Applying Proposition~\ref{p:upper-GL} gives the bound.
\end{proof}

\subsection{Bounds via contingency arrays}  \label{ss:gen-cont}
We recall the following recent result by Avella-Alaminos and Vallejo~\cite[$\S$2]{AV}, 
based on the older work of Snapper~\cite{Sna}. 

\begin{thm}[\cite{AV}]\label{t:upper}
Let $\la,\mu,\nu \vdash n$.  Then:
$$
g(\la,\mu,\nu) \, \leq \, \CC(\la,\mu,\nu)\. \quad \text{and} \quad 
g(\la,\mu,\nu) \, \leq \, \CC^\ast(\la',\mu,\nu)\..
$$
\end{thm}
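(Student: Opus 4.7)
The strategy is to realize both bounds as dimensions of $S_n$-invariants in triple tensor products of permutation modules, in which the Kronecker coefficient $g(\la,\mu,\nu)$ arises as a single nonnegative summand of a Young-rule expansion.  Write $M^\al=\mathrm{Ind}_{S_\al}^{S_n}\mathbf{1}$ and $\tilde M^\al=\mathrm{Ind}_{S_\al}^{S_n}\sgn$, where $S_\al\subset S_n$ is the Young subgroup.  Young's rule gives $M^\al=\bigoplus_\be K_{\be\al}\ts S^\be$; tensoring by $\sgn_{S_n}$ and using $S^\be\otimes\sgn=S^{\be'}$, we obtain $\tilde M^\al=M^\al\otimes\sgn_{S_n}=\bigoplus_\be K_{\be\al}\ts S^{\be'}$.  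In both decompositions the ``diagonal'' Specht module---$S^\al$ in $M^\al$, respectively $S^{\al'}$ in $\tilde M^\al$---appears with multiplicity $K_{\al\al}=1$.

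For the first bound, the expansion above combined with nonnegativity of Kostka and Kronecker numbers gives
$$
\dim\bigl(M^\la\otimes M^\mu\otimes M^\nu\bigr)^{S_n}
\,=\sum_{\al,\be,\ga\vdash n}K_{\al\la}\ts K_{\be\mu}\ts K_{\ga\nu}\,g(\al,\be,\ga)
\,\geq\,g(\la,\mu,\nu).
$$
On the other hand, the invariant subspace is spanned by the $S_n$-orbit sums of the standard tensor basis, which consists of triples $(\pi_\la,\pi_\mu,\pi_\nu)$ of ordered set partitions of $[n]$ of types $\la,\mu,\nu$.  The diagonal $S_n$-orbit of such a triple is recorded precisely by the triple intersection sizes $|(\pi_\la)_i\cap(\pi_\mu)_j\cap(\pi_\nu)_k|$, i.e.\ a $3$-dimensional contingency array with marginals $\la,\mu,\nu$.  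Hence $\dim(M^\la\otimes M^\mu\otimes M^\nu)^{S_n}=\CC(\la,\mu,\nu)$, giving the first inequality.

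For the second bound, the same expansion (now with $K_{\la'\la'}=1$) yields $\dim(\tilde M^{\la'}\otimes M^\mu\otimes M^\nu)^{S_n}\geq g(\la,\mu,\nu)$.  The projection formula $\tilde M^{\la'}\otimes M^\mu\otimes M^\nu=\mathrm{Ind}_{S_{\la'}}^{S_n}\bigl(\sgn\otimes(M^\mu\otimes M^\nu)\bigr)$ together with Frobenius reciprocity then give
$$
\dim\bigl(\tilde M^{\la'}\otimes M^\mu\otimes M^\nu\bigr)^{S_n}
\,=\,\dim\mathrm{Hom}_{S_{\la'}}\bigl(\sgn,\ts M^\mu\otimes M^\nu\bigr),
$$
i.e.\ the dimension of the $\sgn$-isotypic component of $M^\mu\otimes M^\nu$ as an $S_{\la'}$-module.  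The standard basis of $M^\mu\otimes M^\nu$ is indexed by functions $f\colon[n]\to[\ell(\mu)]\times[\ell(\nu)]$ with marginals $\mu,\nu$; the $S_{\la'}$-antisymmetrizer annihilates any $f$ that is non-injective on some block $B_j$ of the partition of $[n]$ into blocks of sizes $\la'_1,\la'_2,\ldots$, and the $S_{\la'}$-orbits of the remaining ``injective-on-blocks'' functions biject with $0$-$1$ contingency arrays of marginals $\la',\mu,\nu$: the subset $f(B_j)\ssu[\ell(\mu)]\times[\ell(\nu)]$ has size $\la'_j$ and is read as the $j$-th slice.  The main obstacle is this final orbit-counting step---checking marginal compatibility and the linear independence of the resulting antisymmetrized basis vectors---after which $\dim(\tilde M^{\la'}\otimes M^\mu\otimes M^\nu)^{S_n}=\CC^*(\la',\mu,\nu)$, and the proof is complete.
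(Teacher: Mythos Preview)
The paper does not actually prove this theorem: it is quoted from Avella-Alaminos--Vallejo~\cite{AV} (with the first inequality attributed to Snapper~\cite{Sna}), and no argument is given in the text.  So there is no ``paper's own proof'' to compare against.

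Your argument is correct and is essentially the standard representation-theoretic proof.  For the first inequality you identify $\dim(M^\la\otimes M^\mu\otimes M^\nu)^{S_n}$ both as $\sum K_{\al\la}K_{\be\mu}K_{\ga\nu}\,g(\al,\be,\ga)\ge g(\la,\mu,\nu)$ via Young's rule, and as $\CC(\la,\mu,\nu)$ via the orbit count on triples of tabloids; this is exactly Snapper's argument.  For the second inequality your use of the projection formula and Frobenius reciprocity to reduce to the $\sgn$-isotypic component of $M^\mu\otimes M^\nu$ over $S_{\la'}$ is clean, and the orbit analysis is right: a basis vector $f$ survives antisymmetrization iff $f$ is injective on each block of $S_{\la'}$ (otherwise a transposition in the stabilizer kills it), the stabilizer of such an $f$ in $S_{\la'}$ is trivial, so distinct $S_{\la'}$-orbits give antisymmetrized sums with disjoint supports, hence linearly independent.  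The bijection between these orbits and $0$--$1$ arrays with marginals $(\la',\mu,\nu)$ is exactly as you describe.

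Two cosmetic points.  First, in the projection formula you should write $\mathrm{Res}_{S_{\la'}}(M^\mu\otimes M^\nu)$ rather than $M^\mu\otimes M^\nu$; the omission is harmless but worth making explicit.  Second, your phrase ``the main obstacle is this final orbit-counting step'' reads oddly in a finished proof---just carry out the verification (disjoint supports, trivial stabilizer) in one sentence and drop the meta-commentary.
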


It is useful to compare the theorem with  Theorem~\ref{t:cont}, where $\CC(\la,\mu,\nu)$
is the leading and also the maximal term.  The last fact is proved in~\cite{PP-future}.  

\smallskip

Note also that $\CC(\cdot)$ and $\CC(\cdot)$ are difficult to estimate except for 
a few special cases (see~$\S$\ref{ss:fin-cont})
For example, for $\la=(m,m)$ as above, we have $\ell=2$ and a direct calculations gives
$$
g(\la,\la,\la) \, \le \, \CC(\la,\la,\la)\, \sim \,  \frac{m^4}{12} \.,
$$
which is asymptotically equal to the bound~\eqref{eq:upper-schur} above 
(cf.~\ref{ss:fin-two-rows}).  On the other hand, 
for the same $\la=(m,m)$ and $n=2\ts m = k^2$, the second inequality in the theorem
and the symmetry~\eqref{eq:kron-conj} gives 
$$
g\bigl(\la,k^k,k^k\bigr) \, \le \, \CC^\ast\bigl(\la,k^k,k^k\bigr)\ts.
$$
Since 
$$
\binom{n}{n/2} \, \le \, \CC^\ast\bigl(\la,k^k,k^k\bigr) \, \le \, \binom{n}{n/2}^2\.,
$$
the theorem gives a weaker bound than the dimension bound~\eqref{eq:upper-cat}.  
At the same time, Corollary~\ref{c:gen-upper-binom} gives an even weaker bound
$$
g\bigl(\la,k^k,k^k\bigr) \, \le \, \binom{3n/2}{n/2}^2\ts.
$$

\bigskip

\section{Bounds via stability}\label{s:stab}

\subsection{$k$-stability of Kronecker coefficients}\label{ss:stab-k}
We begin with an effective version of $k$-stability.

\begin{thm}\label{t:k-stab}
Let $\la,\mu,\nu\vdash n$ and suppose $\la_1 \le \mu_1 \le \nu_1$.  Denote $s=n-\nu_1$.
Let $k$ be fixed, such that
$$(\circledast) \qquad \min\{\la_k,\mu_k\} \, \geq \, \max\{\la_{k+1},\mu_{k+1} \} \. + \. s\..
$$
Then, for all \ts $t\geq 0$, we have:
$$g(\la,\mu,\nu) \. = \. g\bigl(\la + (t^k), \ts \mu +(t^k), \ts \nu + (tk)\bigr).
$$
\end{thm}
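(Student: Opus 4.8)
The plan is to prove the stronger statement by exhibiting the equality \ts $g(\la,\mu,\nu) = g(\la+(t^k),\mu+(t^k),\nu+(tk))$ \ts for $t=1$ — that is, that adding a single column of height $k$ to $\la$ and $\mu$ together with a single row of length $k$ to $\nu$ leaves the Kronecker coefficient unchanged under the hypothesis $(\circledast)$ — and then iterating. Indeed, once the case $t=1$ is established, note that the partitions $\la'=\la+(1^k)$, $\mu'=\mu+(1^k)$, $\nu'=\nu+(k)$ again satisfy the hypotheses of the theorem: we still have $\la'_1\le\mu'_1\le\nu'_1$ (each first part grew by $1$, except $\nu'_1$ which grew by $k\ge 1$, so the inequalities are preserved, and in fact $s'=n'-\nu'_1 = (n+k)-(\nu_1+k)=s$ is unchanged), and $(\circledast)$ still holds since $\min\{\la'_k,\mu'_k\}=\min\{\la_k,\mu_k\}+1$ while $\max\{\la'_{k+1},\mu'_{k+1}\}=\max\{\la_{k+1},\mu_{k+1}\}$. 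So a straightforward induction on $t$ reduces everything to the single step $t\mapsto t+1$, which is the same as the base case.

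For the single-step equality my approach is to apply the Reduction Lemma~\ref{l:reduction} of~\cite{PP_c} (invoked in the introduction as the main tool behind Theorem~\ref{t:k-stab-limit}). The idea is that under a separation condition — exactly the kind of gap between the $k$-th and $(k+1)$-st parts quantified by $s=n-\nu_1$ in $(\circledast)$ — the Kronecker coefficient $g(\alpha,\beta,\gamma)$ is insensitive to simultaneously adding a full column of height $k$ to the first two partitions and a row of length $k$ to the third. Concretely, I would verify that the hypothesis $(\circledast)$ is precisely what is needed to put $(\la,\mu,\nu)$ (and then $(\la+(t^k),\mu+(t^k),\nu+(tk))$ for each $t$) in the range of applicability of the Reduction Lemma, with the ``defect'' parameter controlled by $s$; the condition $\la_1\le\mu_1\le\nu_1$ together with $s=n-\nu_1$ is there to guarantee that $\nu$ has a long enough first row to absorb the added row of length $k$ without interfering with the remaining rows. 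Then the Reduction Lemma gives the desired identity directly.

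The main obstacle I anticipate is matching the bookkeeping of the Reduction Lemma's hypotheses to the clean combinatorial condition $(\circledast)$: one must check that the precise inequality $\min\{\la_k,\mu_k\}\ge\max\{\la_{k+1},\mu_{k+1}\}+s$ is exactly (not merely sufficiently) strong for the reduction to fire, and that it is stable under the induction step — which, as noted above, it is, since $s$ does not change and the relevant parts on the left grow while those on the right do not. A secondary point to be careful about is the asymmetry between the column additions $(t^k)$ to $\la,\mu$ and the row addition $(tk)$ to $\nu$: this is the ``$k$ rows versus $k$ columns'' phenomenon that appears in Manivel's and Vallejo's work on rectangular and tensor-square stability, and one should confirm the Reduction Lemma is stated (or can be transposed via the conjugation symmetry~\eqref{eq:kron-conj}) in the form that produces this particular shape of increment. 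Once these compatibility checks are in place, the proof is a one-line application of Lemma~\ref{l:reduction} followed by the induction on $t$ described above.
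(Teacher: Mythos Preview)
Your proposal is on the right track: the paper's proof also rests entirely on the Reduction Lemma~\ref{l:reduction}. Two points are worth noting.

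First, the induction on $t$ is unnecessary. The paper applies the reduction map $\phi$ simultaneously to the triples $(\la,\mu,\nu)$ and $(\ovl,\ovm,\ovn)=(\la+(t^k),\mu+(t^k),\nu+(tk))$ for arbitrary $t\ge 0$, and shows in one stroke that $\phi(\la)=\phi(\ovl)$, $\phi(\mu)=\phi(\ovm)$, $\phi(\nu)=\phi(\ovn)$. Since $\la_i-\mu_i=\ovl_i-\ovm_i$ for every $i$ and $s=n-\nu_1=(n+tk)-\ovn_1$, both triples fall in the same case of the lemma; in case~(ii) one then checks that $\ovr:=\ovl\cap\ovm=\rho+(t^k)$, so $\ovl-\ovr=\la-\rho$ and $\ovm-\ovr=\mu-\rho$, whence the $\phi$-images agree. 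Your step-by-step induction would work, but it repeats this same computation $t$ times instead of once.

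Second, the ``main obstacle'' you flag is in fact the entire content of the proof, and your proposal stops just short of identifying the mechanism. The Reduction Lemma does not itself produce the identity $g(\la,\mu,\nu)=g(\la+(1^k),\mu+(1^k),\nu+(k))$; rather, it asserts $g(\al,\be,\ga)=g(\phi(\al),\phi(\be),\phi(\ga))$ for each triple separately. The link between the two triples is that $(\circledast)$ forces the index $k$ to belong to the set $I=\{i:\rho_i\ge\omega_{i+1}+s\}$ appearing in the definition of $\phi$ (here $\rho=\la\cap\mu$, $\omega=\la\cup\mu$), and this is what makes the index sets $I$ identical for the original and augmented triples, so that the formulas for $\phi$ line up. No conjugation symmetry is needed; the row/column asymmetry between $(t^k)$ and $(tk)$ is built into the Reduction Lemma as stated.
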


For $k=1$, we obtain the \emph{stability of Kronecker coefficients} which we state in this form:

\begin{cor}[\cite{PP_c}, Cor.~8.3]\label{c:1-stab}
Let $\la,\mu,\nu\vdash n$, \ts $\la_1 \le \mu_1 \le \nu_1$, and suppose  $\la_1+\nu_1 \le n + \max\{\la_2,\mu_2\}$.
Then, for all $t\ge 0$, we have:
$$g(\la,\mu,\nu) \. = \. g\left(\la + (t), \ts \mu +(t), \ts \nu + (t)\right) \. = \, \ov{g}(\al,\be,\ga)\ts,
$$
where $\al = (\la_2,\la_3,\ldots)$, $\be = (\mu_2,\mu_3,\ldots)$, $\ga = (\ga_2,\ga_3\ldots)$, and \.
$\ov{g}_1(\al,\be,\ga)$ \ts is the \emph{stable Kronecker coefficient}.
\end{cor}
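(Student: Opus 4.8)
The plan is to derive Corollary~\ref{c:1-stab} directly from Theorem~\ref{t:k-stab} by specializing $k=1$ and then identifying the stabilized quantity with the reduced Kronecker coefficient. First I would check that the hypothesis of Corollary~\ref{c:1-stab} is exactly the hypothesis $(\circledast)$ of Theorem~\ref{t:k-stab} in the case $k=1$. With $s=n-\nu_1$, condition $(\circledast)$ reads $\min\{\la_1,\mu_1\}\ge \max\{\la_2,\mu_2\}+s$, i.e.\ $\la_1\ge \max\{\la_2,\mu_2\}+n-\nu_1$ since $\la_1\le\mu_1$; rearranging gives $\la_1+\nu_1\le n+\max\{\la_2,\mu_2\}$, which is precisely the stated hypothesis. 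Hence Theorem~\ref{t:k-stab} applies and yields $g(\la,\mu,\nu)=g(\la+(t),\mu+(t),\nu+(t))$ for all $t\ge 0$, giving the first equality.

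For the second equality, I would invoke the definition/standard properties of the stable (reduced) Kronecker coefficient. Recall that for partitions $\al,\be,\ga$ of the same size, $\ov{g}_1(\al,\be,\ga)$ is by definition the common stable value of $g(\al+(N-|\al|,\ldots),\ldots)$ as one pads the first rows; equivalently, for $N$ sufficiently large, $\ov{g}_1(\al,\be,\ga)=g((N-|\al|,\al),(N-|\be|,\be),(N-|\ga|,\ga))$. Writing $\la=(\la_1,\al)$, $\mu=(\mu_1,\be)$, $\nu=(\nu_1,\ga)$ with $\al=(\la_2,\la_3,\ldots)$ etc., and applying the first equality repeatedly to push $\la_1,\mu_1,\nu_1$ up in lockstep, the triple $(\la+(t),\mu+(t),\nu+(t))$ has first parts $\la_1+t,\mu_1+t,\nu_1+t$. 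The only subtlety is that the three first parts $\la_1,\mu_1,\nu_1$ need not be equal, whereas the stable coefficient is usually defined by padding all three to a common value $N$; but since by hypothesis $\nu_1$ is the largest, one can first apply stability to bring $\la_1$ and $\mu_1$ up to $\nu_1$ (checking that $(\circledast)$ with $k=1$ persists under such shifts, which it does because increasing $\la_1,\mu_1$ only strengthens the left side of $(\circledast)$), and then push all three up together. Matching this with the padding definition of $\ov{g}_1$ then gives $g(\la,\mu,\nu)=\ov{g}_1(\al,\be,\ga)$.

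The main obstacle I anticipate is purely bookkeeping: verifying that the condition $(\circledast)$ is stable under the intermediate shifts needed to equalize the first rows, and confirming that the "sufficiently large $t$" in the definition of $\ov{g}_1$ is compatible with the "all $t\ge 0$" conclusion of Theorem~\ref{t:k-stab} once one has equalized $\la_1=\mu_1=\nu_1$. Concretely, I would argue: after shifting so that $\la_1=\mu_1=\nu_1=:N_0$, the hypothesis becomes $\max\{\la_2,\mu_2\}\ge 0$, which is automatic, so $g(\la+(t),\mu+(t),\nu+(t))$ is constant for all $t\ge 0$; this constant is, by the classical definition, $\ov{g}_1(\al,\be,\ga)$. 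Since each shift preserves the value of $g$, the original $g(\la,\mu,\nu)$ equals this constant as well. A one-line remark that a typo "$\ga=(\ga_2,\ga_3,\ldots)$" should read $\ga=(\nu_2,\nu_3,\ldots)$ would complete the proof; no genuine new estimates are required, so the corollary follows essentially formally from the theorem already in hand.
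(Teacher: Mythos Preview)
Your approach is the paper's: the corollary is simply the specialization $k=1$ of Theorem~\ref{t:k-stab}, and the paper gives no separate argument. Two slips are worth flagging, though.

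First, your rearrangement is backwards. From $\la_1\ge \max\{\la_2,\mu_2\}+n-\nu_1$ you get $\la_1+\nu_1\ge n+\max\{\la_2,\mu_2\}$, not $\le$. (The printed statement of the corollary has the same reversed inequality; the condition that actually matches $(\circledast)$ for $k=1$ is $\la_1+\nu_1\ge n+\max\{\la_2,\mu_2\}$.) So the verification step is right in spirit but the sign should be fixed.

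Second, the detour through ``equalizing the first rows'' is both unjustified and unnecessary. Theorem~\ref{t:k-stab} only lets you add $(t)$ to all three partitions simultaneously, so you cannot use it to raise $\la_1,\mu_1$ up to $\nu_1$ while leaving $\nu_1$ fixed. Fortunately you do not need to: in the paper $\ov g_1(\al,\be,\ga)$ is \emph{defined} as the limit of $G_1(t)=g(\la+(t),\mu+(t),\nu+(t))$, and setting $N=n+t$ shows this is exactly $g((N-|\al|,\al),(N-|\be|,\be),(N-|\ga|,\ga))$ regardless of whether $|\al|,|\be|,|\ga|$ coincide. Since Theorem~\ref{t:k-stab} gives $G_1(t)=G_1(0)$ for all $t\ge 0$, the identification $g(\la,\mu,\nu)=\ov g_1(\al,\be,\ga)$ is immediate. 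Your remark about the typo $\ga=(\nu_2,\nu_3,\ldots)$ is correct.
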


\begin{proof}[Proof of Theorem~\ref{t:k-stab-limit}]
First, observe that condition~$(\circledast)$ in Theorem~\ref{t:k-stab}
holds for large enough~$t$.  This implies that $G(t)$ is eventually
constant.  Second, take the smallest $t_0$ such
that~$G(t_0)>0$.  Note that \ts $g\bigl(1^k,1^k,(k)\bigr) = 1$
by~\eqref{eq:kron-conj}.  Thus, by Manivel's inequality (Theorem~\ref{t:manivel}),
for all $t\ge t_0$ we have:
$$G(t+1) \, \ge \, \max \ts \bigl\{G(t), \ts
g\bigl(1^k,1^k,(k)\bigr)\bigr\}\, \ge \, \max\ts \bigl\{G(t), \ts 1 \bigr\}\ts,
$$
which proves that $G(t)$ is monotone increasing.
\end{proof}

\subsection{Proof of $k$-stability}
The proof is based on the technical, but powerful \emph{Reduction Lemma}
which we used in~\cite[Lemma~5.1]{PP_c} for complexity purposes.
Following~\cite{PP_c}, define the \emph{reduction map}
$$(\la,\mu,\nu) \. \to \. \bigl(\phi(\la),\phi(\mu),\phi(\nu)\bigr)
$$
as follows.\footnote{Here we are somewhat abusing notation, since the map $\phi$ is defined
on triples, rather than individual partitions.}

\smallskip

In the notation of the theorem, suppose $\ell(\la)$, $\ell(\mu)$, $\ell(\nu) \le \ell$
and $|\lambda_i -\mu_i| \leq s$ for all~$i\le \ell$ (otherwise, the map~$\phi$ is undefined).
Denote \ts $\omega = \lambda \cup \mu$ and \ts $\rho = \lambda \cap \mu$.
Let $I=\{ i: \rho_i \geq \omega_{i+1}+s, 1\leq i \leq \ell\}$, where
$\omega_{\ell+1}=0$. For all indices~$j$, set
$i_j = \min\{i \in I, i\geq j\}$.
Now let partitions \ts $\phi(\la)$ and \ts $\phi(\mu)$ \ts be defined by their parts:
$$\phi(\la)_j \. = \. \la_j - \rho_{i_j} +s(\ell+1-i_j)\., \ \
\phi(\mu)_j \. = \. \mu_j - \rho_{i_j} +s(\ell+1-i_j)\., \ \ \,
1 \le j \le \ell\ts.
$$
Let $r=|\phi(\la)| = |\phi(\mu)|$, where the latter equality follows
by construction.  Finally, define \ts $\phi(\nu) = (r-s, \nu_2,\nu_3,\ldots)\vdash r$.

\begin{lemma}[Reduction Lemma, \cite{PP_c}] \label{l:reduction}
Let $\lambda,\mu,\nu \vdash n$ and $\ell(\la), \ell(\mu),\ell(\nu) \leq \ell$.  Denote $s=n-\nu_1$. We have the following two cases:
\begin{enumerate}
\item[(i)] \ If $|\lambda_i - \mu_i| > s$ for some~$i$, then $g(\lambda,\mu,\nu)=0$,
\item[(ii)] \ If $|\lambda_i-\mu_i|\leq s$ for all~$i$, $1\le i \le \ell$, then
there is an integer \. $r \leq  2s\ell^2$, s.t.
$$g(\lambda,\mu,\nu) \. = \. g\bigl(\phi(\lambda),\phi(\mu),\phi(\nu)\bigr)
$$
for partitions \ts $\phi(\la),\phi(\mu),\phi(\nu)\vdash r$, defined as above.
\end{enumerate}
\end{lemma}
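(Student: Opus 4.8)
The plan is to treat the two cases by quite different means. Case~(i) is a restatement of \emph{Dvir's vanishing theorem}: if $g(\lambda,\mu,\nu)>0$, then $\nu_1\le |\lambda\cap\mu|$. Granting this, I would just compute
$$
n-|\lambda\cap\mu| \,=\, \sum_{i}\bigl(\lambda_i-\min(\lambda_i,\mu_i)\bigr) \,=\, \sum_i\max(\lambda_i-\mu_i,0) \,=\, \sum_i\max(\mu_i-\lambda_i,0)\ts,
$$
where the last equality holds because $\sum_i\lambda_i=\sum_i\mu_i=n$. Hence if $|\lambda_{i_0}-\mu_{i_0}|>s$ for some $i_0$, then $n-|\lambda\cap\mu|\ge |\lambda_{i_0}-\mu_{i_0}|>s=n-\nu_1$, so $\nu_1>|\lambda\cap\mu|$ and $g(\lambda,\mu,\nu)=0$. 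So (i) is disposed of by quoting Dvir's theorem together with this one-line manipulation.

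For case~(ii) the engine is a formula for $g(\lambda,\mu,\nu)$ valid when $\nu_1=n-s$ is large, of the type first obtained by Dvir: it expresses $g(\lambda,\mu,\nu)$ as a finite sum over inner partitions $\kappa$ with $|\lambda/\kappa|\le s$ and $|\mu/\kappa|\le s$ (so in particular $\kappa_j\ge\omega_j-s$ for all $j$) of products of Littlewood--Richardson coefficients $c^{\lambda}_{\kappa\pi}\,c^{\mu}_{\kappa\pi'}$, weighted by quantities that depend only on $\bar\nu=(\nu_2,\nu_3,\ldots)$ and on $\pi,\pi'$ — but \emph{not} on $\nu_1$. Two features of this formula do the work: it never sees $\nu_1$, only $\bar\nu$; and every term, hence the whole sum, is unchanged if one deletes from $[\lambda]$, from $[\mu]$, and from each $[\kappa]$ one and the same set of columns, provided those columns lie inside every admissible $[\kappa]$ (so that the skew shapes $\lambda/\kappa$ and $\mu/\kappa$ are literally unaffected). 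I would then check that the reduction map $\phi$ is, up to the explicit offsets $s(\ell+1-i_j)$ in its definition, exactly such a deletion: at a good index $i\in I$ (where $\rho_i\ge\omega_{i+1}+s$) the columns of index in $(\omega_{i+1},\,\rho_i-s]$ have height exactly $i$ and, since $\kappa_j\ge\omega_j-s\ge\rho_i-s$ for all $j\le i$, lie inside $[\kappa]$ for \emph{every} admissible $\kappa$; and $\phi$ removes a stack of such columns at the indices of $I$ (shrinking the corresponding parts of $\lambda$ and $\mu$, block by block, to the canonical offsets) together with the matching cells from $\nu_1$. Comparing the two formulas term by term, via the induced bijection $\kappa\mapsto\phi(\kappa)$ on admissible inner partitions, then gives $g(\lambda,\mu,\nu)=g(\phi(\lambda),\phi(\mu),\phi(\nu))$.

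It remains to carry out the bookkeeping. First, one checks that $\phi(\lambda),\phi(\mu)$ are genuine partitions — weak decrease and nonnegativity of the parts — where the definitions of $I$ and the hypothesis $|\lambda_i-\mu_i|\le s$ are used; that $|\phi(\lambda)|=|\phi(\mu)|=r$ and that $\phi(\nu)=(r-s,\nu_2,\nu_3,\ldots)\vdash r$; and that $r\le 2s\ell^2$. The size bound comes from estimating the longest row: for $1\le i<i_1:=\min I$ the failure of the good-index condition gives $\rho_i<\omega_{i+1}+s$, and $\omega_i-\rho_i=|\lambda_i-\mu_i|\le s$, so $\omega_i-\omega_{i+1}<2s$; telescoping gives $\omega_1-\omega_{i_1}<2s(i_1-1)$, whence
$$
\phi(\lambda)_1 \,=\, \lambda_1-\rho_{i_1}+s(\ell+1-i_1)\,\le\,(\omega_1-\omega_{i_1})+s+s(\ell+1-i_1)\,<\,s(i_1+\ell)\,\le\,2s\ell\ts,
$$
and $r=|\phi(\lambda)|<2s\ell^2$ since there are at most $\ell$ rows. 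The hard part will be the one-step identity itself: pinning down the precise Dvir/Littlewood--Richardson formula for $g$ with long first row, and verifying that the defining condition of $I$ is exactly sharp enough that every column deleted by $\phi$ is invisible to all Littlewood--Richardson coefficients and inner partitions that actually occur (and that $\phi$'s explicit offsets reconcile with this "delete‑inside‑$\kappa$" picture). Once that is in place, the partition‑validity of $\phi$ and the bound on $r$ are routine. As an alternative to the Littlewood--Richardson route, one could instead try to show directly that the signed sum of contingency‑array counts in Theorem~\ref{t:cont} is invariant under the block translations of the marginals that $\phi$ induces.
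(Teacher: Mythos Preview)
This paper does not contain a proof of the Reduction Lemma: it is quoted from~\cite{PP_c} (Lemma~5.1 there) and used here as a black box in the proof of Theorem~\ref{t:k-stab}. So there is no proof in the present paper against which to compare your proposal.

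On the merits of your sketch: your argument for case~(i) via Dvir's bound $\nu_1\le|\lambda\cap\mu|$ is correct and clean. For case~(ii), the strategy you outline --- a Dvir-type Littlewood--Richardson expansion of $g(\lambda,\mu,\nu)$ valid when $\nu_1$ is large, followed by the observation that $\phi$ deletes only columns lying inside $[\lambda]$, $[\mu]$, and every admissible inner $[\kappa]$ --- is a legitimate route, and your bookkeeping for the bound $r\le 2s\ell^2$ is correct. You rightly flag the one genuinely delicate step: locating the precise expansion formula and verifying that the defining condition $\rho_i\ge\omega_{i+1}+s$ for $i\in I$ is exactly sharp enough that every deleted column is invisible in every term.

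As for your closing alternative via Theorem~\ref{t:cont}: since both that theorem and the Reduction Lemma originate in~\cite{PP_c}, the contingency-array route is plausibly closer to the argument actually given there, and it would spare you having to pin down the exact LR-type formula.
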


\smallskip

\begin{proof}[Proof of Theorem~\ref{t:k-stab}.]
 We apply the Reduction Lemma with $t$ as in the Theorem.
 Let $\ovl = \la+(t^k)$, $\ovm =\mu+(t^k)$ and $\ovn = \nu +(tk)$. Since $\lambda_i-\mu_i = \ovl_i -\ovm_i$ for all $i$, and $s\geq n-\nu_1 = (n+tk)-\ovn_1$, we conclude that triples of partitions \ts $(\lambda,\mu,\nu)$ and \ts $(\ovl,\ovm,\ovn)$ \ts fall in the same case of the Reduction Lemma.  In case~(i), both Kronecker coefficients from the Theorem are zero, and the equality trivially holds.

In case~(ii), we apply the reduction map~$\phi$ to both triples of partitions $(\la,\mu,\nu) $ and $\left(\ovl, \ovm, \ovn\right)$. Note that the condition in the Theorem implies that $k \in I$ for both triples, so in fact the sets $I$ are the same for both triples of partitions. We have
$\rho=\mu\cap \la$ and $$\ovr =\ovm\cap \ovl = (\mu+t^k)\cap (\la+t^k) = (\mu\cap\la)+(t^k) = \rho+(t^k).$$
Hence \ts $\ovl-\ovr = \la-\rho$ \ts and \ts $\ovm-\ovr=\mu-\rho$, so  for all~$i$ we have
$$\phi(\la)_j = \lambda_j - \rho_j +s(\ell+1-i_j)= (\al_j -\ovr_j) +s(\ell+1-i_j)= \phi(\ovl)_i.$$
Thus $\phi(\la)=\phi(\ovl)$ and similarly, $\phi(\mu)=\phi(\ovm)$. We also have $\phi(\nu) = (r-s,\nu_2,\ldots)= \phi(\ovn)$.
By case~(ii) of the Reduction Lemma, we have
$$
g(\la,\mu,\nu) \. = \. g(\phi(\la),\phi(\mu),\phi(\nu)) \. = \. g(\ovl,\ovm,\ovn)\ts,
$$
which completes the proof.\qedhere
\end{proof}

\begin{ex}\label{ex:k-stab-min-max}
{\rm
Let $\la =\mu = (2,2)$, $\nu=(3,1)$, $n=4$, and $k=1$.  Here $(n-\nu_1) = 1$, so
the min-max condition~$(\circledast)$ in Theorem~\ref{t:k-stab} does not hold.
A direct calculation gives $g(\la,\mu,\nu)=0$.
On the other hand,
$$g\bigl((2+t,2), \ts (2+t,2), \ts (3+t,1)\bigr) \, = \, 1 \qquad \text{for all} \ \ t\ge 1$$
(see e.g.~\cite{RW,Ros}).  This illustrates that shapes $\la$ and $\mu$ must be sufficiently
disconnected between rows $k$ and~$k+1$ for the theorem to hold.
}
\end{ex}

\subsection{Upper bounds}\label{ss:upper-stab-other}
The following result is a consequence of Theorem~\ref{t:k-stab}:

\begin{cor}  \label{c:upper-stab-max}
Let $\nu = (\nu_1,\ldots,\nu_\ell) \vdash s$ be fixed and $u=(\ell+1)\ell s$.
Then, for every $n\ge u$, partitions $\la,\mu \vdash n$ such that
$\ell(\la),\ell(\mu)\le \ell$, we have:
$$
g\bigl(\la,\ts \mu, \ts (n-s,\nu)\bigr) \, \le  \,  \max_{r\leq u} \. \max_{\al,\ts\be \vdash r, \ts \al \ssu \la, \ts \be \ssu \mu} \.
g\bigl(\al,\ts \be, \ts (r-s,\nu)\bigr).
$$
\end{cor}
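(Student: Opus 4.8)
\emph{Proof idea.}
The plan is to reduce the statement to the sharper claim that for every $\la,\mu\vdash n$ with $\ell(\la),\ell(\mu)\le\ell$ and with $(n-s,\nu)$ a genuine partition, there exist $\al\subseteq\la$ and $\be\subseteq\mu$ with $\al,\be\vdash r$ for some $r\le u$ such that $g(\la,\mu,(n-s,\nu))=g(\al,\be,(r-s,\nu))$; the corollary is then immediate, since $(\al,\be,r)$ is one of the triples over which the double maximum is taken. I will prove this claim by induction on $n$, the base case $n\le u$ being trivial (take $\al=\la$, $\be=\mu$, $r=n$). For the inductive step assume $n>u$; we may also assume $\ell\ge2$ and $s\ge1$ (otherwise the statement is immediate), and, using the $S_3$-symmetry of $g$ (which leaves the third argument unchanged when the first two are swapped), that $\la_1\le\mu_1$. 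If $|\la_i-\mu_i|>s$ for some $i$, then $g(\la,\mu,(n-s,\nu))=0$ by part~(i) of Lemma~\ref{l:reduction} and any admissible $(\al,\be)$ works; so assume $|\la_i-\mu_i|\le s$ for all $i$, and write $\rho_i=\min\{\la_i,\mu_i\}$ and $\omega_i=\max\{\la_i,\mu_i\}$ (with $\omega_{\ell+1}=0$), so that $0\le\omega_i-\rho_i\le s$.

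The key point, and the source of the threshold $u=(\ell+1)\ell s$, is the elementary estimate that if $n>u$ then $\rho_k>\omega_{k+1}+s$ for some $k\le\ell$: were $\rho_k\le\omega_{k+1}+s$ for every $k$, a downward induction on $i$ starting from $\rho_\ell\le s$ and using $\rho_i\le\omega_{i+1}+s\le\rho_{i+1}+2s$ would give $\rho_i\le(2(\ell-i)+1)s$, hence $\omega_i\le2(\ell-i+1)s$, and then $n=|\la|\le\sum_{i=1}^\ell\omega_i\le s\ell(\ell+1)=u$, a contradiction. Let $k_0$ be the smallest such $k$; then $\rho_{k_0}\ge\omega_{k_0+1}+s+1\ge s+1$, which is exactly what makes $\la-(1^{k_0})$ and $\mu-(1^{k_0})$ partitions. (The one-step reduction map $\phi$ of Lemma~\ref{l:reduction} is not directly useful here, since $\phi(\la)$ need not be contained in $\la$; hence I remove a single column at a time.) I then peel off this column: with $\al=\la-(1^{k_0})\subseteq\la$, $\be=\mu-(1^{k_0})\subseteq\mu$, and third partition $(n-k_0-s,\nu)$, which is still of the required shape and still a genuine partition because $n>u$, the triple $\bigl(\al,\be,(n-k_0-s,\nu)\bigr)$ will satisfy the hypotheses of Theorem~\ref{t:k-stab} for the index $k_0$ and the same value of $s$ (granting which, see below). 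Applying that theorem with $t=1$ then gives $g(\al,\be,(n-k_0-s,\nu))=g(\la,\mu,(n-s,\nu))$, and since $r:=n-k_0<n$ the inductive hypothesis finishes the step.

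The one delicate point is the ordering hypothesis $\la_1\le\mu_1\le\nu_1$ of Theorem~\ref{t:k-stab}, which has to hold for the reduced triple: since removing a $(1^{k_0})$-column with $k_0\ge2$ from the third partition would corrupt its tail $\nu$, the third slot must keep the partition $(n-s,\nu)$, unless $\mu_1>n-s$. This forces a split into two cases. If $\mu_1\le n-s$, the triple $(\la,\mu,(n-s,\nu))$ is already admissible; the only non-obvious inequality needed for the reduced triple is $\mu_1-1\le n-s-k_0$, which is the case hypothesis when $k_0=1$, and when $k_0\ge2$ follows from $\mu_2,\dots,\mu_{k_0}\ge\rho_{k_0}\ge s+1$ (so that $\sum_{i\ge2}\mu_i\ge(k_0-1)(s+1)\ge s+k_0-1$). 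If $\mu_1>n-s$, then $\la_1\ge\mu_1-s>n-2s$, so $\la$ and $\mu$ are ``thin'' below the first row ($\la_2\le2s$, $\mu_2\le s$); this forces $k_0=1$, and after using $S_3$-symmetry to place $\mu$ in the third slot the condition $(\circledast)$ for $k=1$ is checked directly from $n>u\ge6s$. In either case one then verifies, routinely, using $\rho_{k_0}\ge\omega_{k_0+1}+s+1$ and the thinness bounds, that $(\circledast)$ and the ordering survive the deletion of a single cell from the relevant rows, invokes Theorem~\ref{t:k-stab}, and recurses. I expect this symmetry bookkeeping to be the only real obstacle; beyond Theorem~\ref{t:k-stab}, Lemma~\ref{l:reduction}, and the counting estimate above, no further input should be needed.
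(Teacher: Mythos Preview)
Your approach is correct and is essentially the same as the paper's: both arguments use Theorem~\ref{t:k-stab} to strip columns of height~$k$ from $\la$ and $\mu$ (and the corresponding cells from the first row of the third partition), together with the same elementary counting argument showing that once all the gaps $\rho_k-\omega_{k+1}$ are small the total size is at most $u=\ell(\ell+1)s$. The only real difference is packaging: the paper removes columns in bulk (for each $k$ with $\rho_k-\omega_{k+1}\ge 2s$ it subtracts $t=\rho_k-\omega_{k+1}-2s$ columns at once, leaving every gap $\le 2s$), whereas you peel one column at a time and induct on~$n$; your threshold $\rho_{k_0}>\omega_{k_0+1}+s$ is the sharp one needed for $(\circledast)$ after removing a single column. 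Your treatment is in fact more careful than the paper's on one point: you explicitly verify the ordering hypothesis $\la_1\le\mu_1\le\nu_1$ of Theorem~\ref{t:k-stab} via the case split on $\mu_1\lessgtr n-s$, which the paper's short proof passes over in silence.
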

\begin{proof}
In the notation of the Reduction Lemma, observe that if $\rho_k-\omega_{k+1}=t+2s$  for some  $k$ and $t\geq 0$ then we are either in case (i) of the Reduction Lemma and $g(\la,\mu,\nu)=0$, or else we can apply  Theorem~\ref{t:k-stab} to the partitions $\la-(t^k), \mu-(t^k), \nu-(tk)$. Thus we reduce the partitions to $\alpha,\beta,\gamma$ of size at most
$$\al_1+\ldots + \al_\ell \. \leq \. (\omega_1-\rho_2) + 2(\omega_2-\rho_3) + \ldots +\ell (\omega_\ell-0) \leq 2s \binom{\ell+1}{2} \. =\. \ell(\ell+1)s \. = \. u\..$$
This implies the result.
\end{proof}

Combining this corollary with Corollary~\ref{c:upper-min}, we obtain:

\begin{cor}  \label{c:upper-stab}
In conditions of Corollary~\ref{c:upper-stab-max}, we have:
$$
g\bigl(\la,\ts \mu, \ts (n-s,\nu)\bigr) \. \le  \. f^{(u-s,\nu)}\ts.
$$
\end{cor}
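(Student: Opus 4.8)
The plan is to simply chain together the two corollaries just proved. First I would apply Corollary~\ref{c:upper-stab-max}, which bounds $g\bigl(\la,\ts\mu,\ts(n-s,\nu)\bigr)$ by the maximum of $g\bigl(\al,\ts\be,\ts(r-s,\nu)\bigr)$ over all $r\le u$ and all $\al,\be\vdash r$ with $\al\ssu\la$ and $\be\ssu\mu$. Next, to each term on the right I would apply the dimension bound of Corollary~\ref{c:upper-min}, selecting the third of the three quantities $f^\al,\ts f^\be,\ts f^{(r-s,\nu)}$, which gives $g\bigl(\al,\ts\be,\ts(r-s,\nu)\bigr)\le f^{(r-s,\nu)}$. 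It then remains only to bound $f^{(r-s,\nu)}$, uniformly over $r\le u$, by $f^{(u-s,\nu)}$.

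This last step rests on the elementary monotonicity of the number of standard Young tableaux under inclusion of Young diagrams: if $\eta\ssu\kappa$ then $f^\eta\le f^\kappa$, since every standard Young tableau of shape $\eta$ extends to one of shape $\kappa$ by filling the skew shape $\kappa/\eta$ with the remaining entries along any linear extension of its cell poset, and such a linear extension always exists. Here I take $\eta=(r-s,\nu)$ and $\kappa=(u-s,\nu)$; since $r\le u$ these two partitions differ only in their first part, with $r-s\le u-s$, so indeed $(r-s,\nu)\ssu(u-s,\nu)$. Note $u=(\ell+1)\ell\ts s\ge 2s$, so $u-s\ge s\ge\nu_1$ and $(u-s,\nu)$ is an honest partition; the same check applies to each $(r-s,\nu)$ arising in Corollary~\ref{c:upper-stab-max}. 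Combining the three inequalities yields exactly $g\bigl(\la,\ts\mu,\ts(n-s,\nu)\bigr)\le f^{(u-s,\nu)}$.

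I do not expect any real obstacle: the proof is essentially a two-line deduction from results already in hand, and the only ingredient not already stated is the monotonicity $f^\eta\le f^\kappa$, which is standard. If anything, the one point worth checking carefully is precisely which triples $(r-s,\nu)$ the maximum in Corollary~\ref{c:upper-stab-max} ranges over, to make sure all of them are genuine partitions sitting inside $(u-s,\nu)$ — but this is immediate from $|\nu|=s$ and $r\le u$.
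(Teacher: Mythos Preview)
Your proposal is correct and matches the paper's approach exactly: the paper states only ``Combining this corollary with Corollary~\ref{c:upper-min}, we obtain'' and leaves the details implicit, while you have spelled out the chaining of Corollary~\ref{c:upper-stab-max} with the dimension bound and the monotonicity $f^{(r-s,\nu)}\le f^{(u-s,\nu)}$. One small quibble: the fact that each $(r-s,\nu)$ arising in the maximum is a genuine partition does not follow from $r\le u$ but rather from the construction in the proof of Corollary~\ref{c:upper-stab-max}, where $(r-s,\nu)$ is by definition the third partition in a triple of partitions of~$r$.
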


In other words, the coefficient in the corollary is bounded by a
constant independent of~$n$.  For example, for
$\ell(\la),\ell(\mu)\le \ell$, $\nu=(s)$, and fixed~$\ell$,
we have
$$
g\bigl(\la,\ts \mu, \ts (n-s,\nu)\bigr) \. \le  \. f^{(u-s,\nu)}\. <  \.
\binom{(\ell+1)^2s}{s} \. <  \. C(\ell)^s\..
$$

\subsection{Durfee square}\label{ss:upper-stab-durfee}
Denote by $\rd(\la)$ the Durfee square size:
$$\rd(\la) \. = \. \max\ts\{\ts r \ | \ \la_r\ge r\ts\}\ts.
$$
Here we use the $k$-stability for both usual and conjugate diagrams,
via the symmetry~\eqref{eq:kron-conj}, to obtain upper bounds for
partitions with small Durfee square (cf.~\cite{BR,Dvir}).

\begin{cor}\label{c:stab-durfee}
Let $\nu=(\nu_1,\ldots,\nu_\ell) \vdash s$ be fixed and $u=2\ts (h+1)^2s$. Then, for every $n\geq u$ and $\la,\mu \vdash n$,
such that $\rd(\la),\ts\rd(\mu) \leq h$, we have
$$
g\bigl(\la,\ts \mu, \ts (n-s,\nu)\bigr) \. \le  \.  \max_{r\leq u} \.\max_{\al,\ts\be \vdash r, \ts \al \ssu \la, \ts \be \ssu \mu} \.
g\bigl(\al,\ts \be, \ts (r-s,\nu)\bigr).
$$
\end{cor}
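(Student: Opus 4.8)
The plan is to follow the proof of Corollary~\ref{c:upper-stab-max} closely, except that I would apply the $k$-stability reduction of Theorem~\ref{t:k-stab} to \emph{both} the pair $(\la,\mu)$ and the conjugate pair $(\la',\mu')$, using the symmetry~\eqref{eq:kron-conj} for the latter, and replace the role of the length bound $\ell(\la),\ell(\mu)\le\ell$ by the observation that a partition with Durfee square $\le h$ is contained in the ``$h$-hook'' formed by its first $h$ rows together with its first $h$ columns.

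Concretely: if $g\bigl(\la,\mu,(n-s,\nu)\bigr)=0$ the inequality is trivial, so I assume it is positive; by the Reduction Lemma~\ref{l:reduction}, applied both to $(\la,\mu,(n-s,\nu))$ and — via~\eqref{eq:kron-conj} — to $(\la',\mu',(n-s,\nu))$, I then have $|\la_i-\mu_i|\le s$ and $|\la'_i-\mu'_i|\le s$ for all $i$. Now, exactly as in Corollary~\ref{c:upper-stab-max}, whenever $(\la\cap\mu)_k-(\la\cup\mu)_{k+1}=t+2s$ for some $k$ and $t\ge 0$ I apply Theorem~\ref{t:k-stab} backwards at level $k$, replacing $(\la,\mu,(n-s,\nu))$ by $(\la-(t^k),\,\mu-(t^k),\,(n-s-tk,\nu))$ without changing the Kronecker coefficient — the gap condition guarantees that all three arrays remain genuine partitions and that $s$ is unchanged. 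Symmetrically, whenever $(\la'\cap\mu')_k-(\la'\cup\mu')_{k+1}$ is too large I conjugate via~\eqref{eq:kron-conj} (which leaves the third partition, of the form $(n-s,\nu)$, untouched), reduce at level $k$, and conjugate back. Each such step strictly decreases $|\la|$, so after finitely many steps I reach $\al\ssu\la$, $\be\ssu\mu$ with $|\al|=|\be|=:r$,
$$
g\bigl(\la,\mu,(n-s,\nu)\bigr)\,=\,g\bigl(\al,\be,(r-s,\nu)\bigr),
$$
and with all horizontal gaps $(\al\cap\be)_k-(\al\cup\be)_{k+1}<2s$ and all vertical gaps $(\al'\cap\be')_k-(\al'\cup\be')_{k+1}<2s$. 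Containment $\al\ssu\la$, $\be\ssu\mu$ survives the alternation because each reduction only deletes boxes, and passing to conjugates and back preserves it.

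It remains to bound $r$. Since $\al\ssu\la$ and $\be\ssu\mu$, their Durfee squares are $\le h$, so $\omega:=\al\cup\be$ satisfies $\omega_{h+1}=\max\{\al_{h+1},\be_{h+1}\}\le h$, hence $d:=\rd(\omega)\le h$, and likewise $\rd(\omega')=d\le h$. Combining the residual gap bounds with $|\al_i-\be_i|\le s$ and $|\al'_i-\be'_i|\le s$ shows that consecutive parts of $\omega$, and of $\omega'$, differ by a bounded multiple of $s$; a telescoping estimate down the $d\le h$ rows and across the $d\le h$ columns that meet the Durfee square of $\omega$ — exactly as in Corollary~\ref{c:upper-stab-max} — bounds the arm and the leg of $\omega$, and together with $d^2\le h^2$ this yields
$$
r\,=\,|\al|\,\le\,|\omega|\,\le\,2(h+1)^2 s\,=\,u.
$$
Since $g\bigl(\al,\be,(r-s,\nu)\bigr)$ is one of the terms in the maximum on the right-hand side of the asserted inequality, the corollary follows.

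The one genuinely delicate point I foresee is this last step: one must check that the post-reduction horizontal and vertical gaps, together with $|\al_i-\be_i|\le s$, $|\al'_i-\be'_i|\le s$, and $\rd(\omega)\le h$, really do telescope to the precise bound $r\le 2(h+1)^2 s$. This amounts to matching the reduction threshold of Theorem~\ref{t:k-stab} to the hook geometry of $\omega$ with the same care as in the proof of Corollary~\ref{c:upper-stab-max}, and — unlike in that corollary — exploiting the interplay between the row and column estimates (a long arm forces the conjugate gaps small, and conversely), so that both cannot be large simultaneously. Everything else — the conjugation symmetry, the preservation of containment, and the termination of the reduction — is routine.
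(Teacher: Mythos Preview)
Your proposal is correct and follows essentially the same approach as the paper: reduce the first $h$ rows via Theorem~\ref{t:k-stab}, then invoke the conjugation symmetry~\eqref{eq:kron-conj} and reduce the first $h$ columns the same way, finally bounding the size using the Durfee constraint. The paper does this as a clean two-pass (all rows first, then conjugate, all columns, conjugate back) rather than your alternating iteration, and its telescoping estimate gives $\sum_{i\le h}\al_i\le h^2+h(h+1)s$ after each pass, so $r\le h^2+2h(h+1)s\le 2(h+1)^2 s$ --- precisely the ``delicate'' step you flagged.
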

\begin{proof}
We use the notation of the Reduction Lemma and the definition of $\phi$. We can assume that $|\la_k-\mu_k|\leq s$, as otherwise the Kronecker coefficient is immediately 0 by case (i) of the Reduction Lemma. Suppose that there is an index $k$, such that $\omega_k-\rho_{k+1} =t+2s$ for some $t\geq 0$.  The partitions $\la-(t^k),\ts \mu- (t^k), \ts \nu-(tk)$ satisfy the conditions of Theorem~\ref{t:k-stab}, so
$$g\bigl(\la,\ts \mu, \ts (n-s,\nu)\bigr) = g\bigl(\la-(t^k),\ts \mu-(t^k), \ts (n-tk-s,\nu)\bigr).$$
We thus reduce the partitions to $(\al,\be,\ga)$ with the first $\ell$ parts of $\al$ and $\be$ adding up to at most
\ts $h^2+ 2s+4s+\ldots +2h s = h^2 +h(h+1)s$.
Recall the symmetry~\eqref{eq:kron-conj}:
$$
g(\al,\be,\ga) \. = \. g(\al',\be', \ga)\ts.
$$
Now apply the same reasoning for the partitions $\al',\be',\ga$ to reduce their first~$h$ parts,
and conjugate them again.  The resulting partitions have size at most $h^2 + 2 h(h+1)s\le u$ then,
as desired
\end{proof}

\begin{cor}  \label{c:upper-stab-durfee}
In conditions of Corollary~\ref{c:stab-durfee}, we have:
$$
g\bigl(\la,\ts \mu, \ts (n-s,\nu)\bigr) \. \le  \. f^{(u-s,\nu)}\ts.
$$
\end{cor}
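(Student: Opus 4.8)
The statement of Corollary~\ref{c:upper-stab-durfee} is the exact analogue of Corollary~\ref{c:upper-stab}, but with the Durfee-square bound of Corollary~\ref{c:stab-durfee} in place of the length bound of Corollary~\ref{c:upper-stab-max}. So the proof should be an immediate one-line deduction: combine Corollary~\ref{c:stab-durfee} with the symmetry upper bound from Corollary~\ref{c:upper-min}.

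\begin{proof}
By Corollary~\ref{c:stab-durfee}, we have
$$
g\bigl(\la,\ts \mu, \ts (n-s,\nu)\bigr) \. \le  \.  \max_{r\leq u} \.\max_{\al,\ts\be \vdash r, \ts \al \ssu \la, \ts \be \ssu \mu} \.
g\bigl(\al,\ts \be, \ts (r-s,\nu)\bigr).
$$
For each term on the right-hand side, Corollary~\ref{c:upper-min} gives
$g\bigl(\al,\ts \be, \ts (r-s,\nu)\bigr) \le f^{(r-s,\nu)}$.
Since $r\le u$, the partition $(r-s,\nu)$ is contained in $(u-s,\nu)$, and $f^{\cdot}$ is monotone under containment of partitions of a fixed shape in the following sense: adding boxes to the first row only increases the dimension, as $f^{(r-s,\nu)}$ counts standard Young tableaux and every such tableau on $(r-s,\nu)$ extends (non-uniquely) to one on $(u-s,\nu)$. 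Hence $f^{(r-s,\nu)} \le f^{(u-s,\nu)}$ for all $r\le u$, and therefore
$$
g\bigl(\la,\ts \mu, \ts (n-s,\nu)\bigr) \. \le  \. f^{(u-s,\nu)}\ts,
$$
as claimed.
\end{proof}

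The only point requiring a word of care is the last monotonicity claim $f^{(r-s,\nu)} \le f^{(u-s,\nu)}$ for $r\le u$; this is the ``main obstacle,'' though a mild one. It follows because a standard Young tableau of shape $(u-s,\nu)$ restricts, by deleting the entries $r-s+1,\dots,u-s$ from the first row (these are forced to be the largest entries of the first row once we require them to be placed there), to a standard Young tableau of shape $(r-s,\nu)$; this restriction map is surjective, giving the inequality. Alternatively one checks it directly from the hook length formula~\eqref{eq:hlf}, as was done implicitly in the proof of Corollary~\ref{c:upper-stab}. With that in hand the corollary is proved exactly as Corollary~\ref{c:upper-stab} was, mutatis mutandis, replacing the length bound $u=(\ell+1)\ell s$ by the Durfee bound $u=2(h+1)^2 s$.
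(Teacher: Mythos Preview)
Your proof is correct and follows exactly the paper's (implicit) approach: the paper states Corollary~\ref{c:upper-stab-durfee} without a separate proof, since it is meant to follow from Corollary~\ref{c:stab-durfee} combined with Corollary~\ref{c:upper-min}, in precisely the same way that Corollary~\ref{c:upper-stab} was obtained from Corollary~\ref{c:upper-stab-max}. Your explicit justification of the monotonicity $f^{(r-s,\nu)}\le f^{(u-s,\nu)}$ via the tableau-extension injection is fine; in the alternative ``restriction'' argument you have written ``entries $r-s+1,\dots,u-s$'' where you mean ``entries $r+1,\dots,u$'' (confusing entries with first-row positions), but this slip does not affect the main argument.
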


Except for a weaker bound on~$u$, Corollary~\ref{c:upper-stab-durfee}
is an extension of Corollary~\ref{c:upper-stab}.  For example, for
$\rd(\la),\rd(\mu)\le h$, $\nu=(s)$, and fixed~$h$,
we have
$$
g\bigl(\la,\ts \mu, \ts (n-s,\nu)\bigr) \. \le  \. f^{(u-s,\nu)}\. <  \.
\binom{2\ts(h+1)^2s}{s} \. <  \. C(h)^s\..
$$
Note that for $h=1$, partitions~$\la$ and~$\mu$ are hooks and their
Kronecker coefficients are well understood (see~\cite{Rem,Ros}).
Curiously, the upper bound $3$ in~~\cite[Cor.~17]{Ros}
is independent on~$s$ in that case.

\bigskip

\section{Bounds via characters}\label{s:char}

\subsection{Proof of Theorem~\ref{t:char_effective}}
Denote by $\chi\dwn$ the restriction of the $S_n$-representation $\chi$
to~$A_n$, and by $\psi\upa$ the induced $S_n$-representation of the
$A_n$-representation~$\psi$.  We refer to~\cite[$\S$2.5]{JK} for basic results in representation theory of~$A_n$.
Recall that if $\nu\neq \nu'$, then $\chi^\nu\dwn = \chi^{\nu'}\dwn=\psi^\nu$ is irreducible in~$A_n$.
Similarly, if $\nu=\nu'$, then $\chi^\nu\dwn = \psi^\nu_+ \oplus \psi^\nu_-$, where $\psi^\nu_{\pm}$
are irreducible in~$A_n$, and are related via $\psi^\nu_+[ (12)\pi(12)] = \psi^\nu_-[\pi]$.

Consider now the conjugacy classes of~$A_n$ and the corresponding character values.
Denote by $C^{\alpha}$ the conjugacy class of $S_n$ of permutations of cycle type~$\alpha$, and
by $\cD\ssu \cP$ the set of partitions into distinct odd parts.
We have two cases:

\smallskip

\nin
{\small $\mathbf{(1)}$} \ For $\alpha\notin\cD$, we have $C^\alpha$ is also a conjugacy class of~$A_n$.  Then
$$
\aligned
\chi^\nu\dwn[C^{\alpha}] \,  = \, \chi^\nu[C^\alpha] \quad & \text{if} \ \ \nu \neq \nu'\ts,\\
\psi^\nu_{\pm}[C^\alpha] \,  = \, \frac12 \. \chi^\nu[C^\alpha]\quad & \text{if} \ \ \nu=\nu'\ts.
\endaligned
$$

\nin
{\small $\mathbf{(2)}$} \ For $\alpha\in\cD$, we have $C^\alpha = C^{\alpha}_+ \cup C^{\alpha}_-$, where $C^{\alpha}_{\pm}$ are conjugacy classes of~$A_n$. Then
$$
\aligned
\chi^\nu \dwn [C^{\alpha}_{\pm}] \,  = \, \chi^\nu [C^\alpha] \quad & \text{if} \ \ \nu \neq \nu'\ts,\\
\psi^{\nu}_{\pm} [C^\alpha_{\pm}] \, = \, \frac12 \chi^\nu[C^\alpha] \quad & \text{if} \ \ \nu = \nu' \ \ \. \text{and} \ \ \. \alpha \neq \wh\nu\ts, \\
\psi^{\nu}_{\pm}[C^{\wht{\nu}}_+] - \psi^{\nu}_{\pm}[C^{\wht{\nu}}_-] \,  = \, \pm e_{\nu}  \quad & \text{if} \ \ \nu=\nu' \ \ \. \text{and} \ \ \.
e_{\nu} = (\wh\nu_1\ts \wh\nu_2 \ts \cdots\ts)^{1/2}>0\ts.
\endaligned
$$

\nin
Now, by the Frobenius reciprocity, for every \ts $\mu=\mu'$ \ts we have:
$$
\langle \psi^{\mu}_{\pm}\upa , \chi^{\alpha}\rangle =  \langle \psi^{\mu}_\pm , \chi^\alpha\dwn\rangle\ts,
$$
which is nonzero exactly when $\alpha=\mu$ and so $\psi^{\mu}_{\pm}\upa = \chi^\mu$.  This implies
\begin{equation}\label{eq:kron}
\aligned
g(\la,\mu,\mu) \, & = \, \langle \chi^\mu\otimes \chi^\la, \chi^\mu \rangle \, = \,
\langle \chi^\mu\otimes \chi^\la, \ts \psi^\mu_{\pm}\upa \rangle \, = \,
\bigl\langle (\chi^\mu\otimes \chi^\la)\dwn, \psi^\mu_{\pm} \bigr\rangle \\
& = \,
\bigl\langle \psi^\mu_+ \ts \otimes \ts \chi^\la\dwn, \ts \psi^\mu_{\pm} \bigr\rangle \. + \.
\bigl\langle \psi^\mu_-\ts \otimes \ts \chi^\la\dwn, \ts \psi^\mu_{\pm} \bigr\rangle\ts.
\endaligned
\end{equation}

\smallskip
We can now estimate the Kronecker coefficient in the theorem.
First, decompose the following tensor product of the $A_n$ representations:
\begin{equation}\label{eq:decomp}
 \psi^{\mu}_+ \ts \otimes \ts  \chi^\la\dwn \, = \, \oplus_{\tau} \. m_\tau \ts \psi^\tau\ts,
 \end{equation}
where $\psi^\tau$ are all the irreducible representations of~$A_n$, the coefficients~$m_\tau$
are their multiplicities in the above tensor product, and $\tau$ goes over the appropriate indexing.

Note that for any character $\chi$ of $S_n$ and $\pi \in A_n$ we trivially have $\chi\dwn [\pi] =\chi[\pi]$.
Evaluating that tensor product on the classes $C^{\wht\mu}_{\pm}$ gives
$$
\bigl(\psi^\mu_+\otimes \chi^\la\dwn\bigr)\bigl[ C^{\wht{\mu}}_+\bigr] \. - \. \bigl(\psi^\mu_+\otimes \chi^\la\dwn\bigr)\bigl[ C^{\wht{\mu}}_-\bigr] \,
= \,
\chi^\la\dwn\bigl[C^{\wht{\mu}}_{\pm}\bigr] \ts \Bigl( \psi^\mu_+\bigl[C^{\wht\mu}_+\bigr] \. - \. \psi^\mu_+\bigl[C^{\wht\mu}_-\bigr] \Bigr) \,
=\, \chi^\la\bigl[C^{\wht{\mu}}\bigr] \ts e_{\nu} \..
$$
On the other hand, evaluating the right-hand side of equation~\eqref{eq:decomp} gives
$$\aligned
 &\qquad \quad  \bigl(\psi^\mu_+\otimes \chi^\la\dwn \bigr)\bigl[ C^{\wht{\mu}}_+\bigr] \. - \. \bigl(\psi^\mu_+\otimes \chi^\la\dwn\bigr)\bigl[ C^{\wht{\mu}}_-\bigr]
 \, = \,
\sum_\tau \. m_\tau \Bigl(\psi^i\bigl[C^{\wht\mu}_+\bigr] - \psi^i\bigl[C^{\wht\mu}_-\bigr]\Bigr) \\
& = \, m_{\mu+}\Bigl(\psi^{\nu}_{+}\bigl[C^{\wht{\nu}}_+\bigr] - \psi^{\nu}_{+}\bigl[C^{\wht{\nu}}_-\bigr]\Bigr)\.
+ \.m_{\mu-}\Bigl(\psi^{\nu}_{-}\bigl[C^{\wht{\nu}}_+\bigr] - \psi^{\nu}_{-}\bigl[C^{\wht{\nu}}_-\bigr]\Bigr) \,
= \, \bigl(m_{\mu+}-m_{\mu-}\bigr) \ts e_{\nu}.
\endaligned
$$
Here we used the fact that all characters are equal at the two classes $C^{\wht{\mu}}_{\pm}$, except for the ones corresponding to~$\mu$.
Equating the evaluations and using $e_\nu>0$, we obtain
$$
m_{\mu+}\ts - \ts m_{\mu-} \. = \. \chi^\la\bigl[C^{\wht{\mu}}\bigr]\ts.
$$
This immediately implies
\begin{equation}\label{eq:max-char}
\max \bigl\{m_{\mu+},m_{\mu-}\bigr\} \, \geq \, \Bigl| \chi^\la\bigl[C^{\wht{\mu}}\bigr] \Bigr|
\end{equation}

On the other hand, since all inner products are nonnegative, the equation~\eqref{eq:kron} gives
$$
g(\la,\mu,\mu) \, \geq \, 
\max\left\{\langle \psi^\mu_+\otimes \chi^\la\dwn, \. \psi^\mu_{+} \rangle, 
\langle \psi^\mu_+\otimes \chi^\la\dwn, \psi^\mu_{-} \rangle\right\} \, =\, 
\max \bigl\{m_{\mu+},m_{\mu-}\bigr\}\.,
$$
and now equation~\eqref{eq:max-char} implies the result. \ $\sq$

\subsection{Stanley's theorem} \label{ss:stanley}

We give a new proof of the following technical result
by Stanley \cite[Prop.~11]{Sta-unim}.  Our proof uses Theorem~\ref{t:char_effective}
and Almkvist's Theorem~\ref{t:almkvist-unimod}.  Both results are
crucially used in the next section.

\begin{thm}[Stanley] \label{t:stanley-unimod}
The following polynomial in $q$ is symmetric and unimodal
$$
\binom{2n}{n}_q \, - \, \. \prod_{i=1}^{n}\. \ts \bigl(1+q^{2i-1}\bigr)\..
$$
\end{thm}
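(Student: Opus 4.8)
The plan is to combine Theorem~\ref{t:char_effective}, the Two Coefficients Lemma~\ref{l:g_partitions}, and Almkvist's Theorem~\ref{t:almkvist-unimod} through the standard criterion for unimodality of a symmetric polynomial via nonnegative first differences. Write $\binom{2n}{n}_q=\sum_{k=0}^{n^2}p_k\ts q^k$ with $p_k:=p_k(n,n)$, and $\prod_{i=1}^n(1+q^{2i-1})=\sum_{k=0}^{n^2}b_k\ts q^k$ with $b_k:=b_k(n)$. Both polynomials are palindromic of degree $n^2$, so their difference is symmetric, and it is enough to show that its coefficient sequence has nonnegative first differences up to the middle, i.e.
$$
\bigl(p_k-p_{k-1}\bigr)\,\ge\,\bigl(b_k-b_{k-1}\bigr)\qquad\text{for all }\ 1\le k\le \lfloor n^2/2\rfloor\ts.
$$

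For the left side, the Two Coefficients Lemma with $\ell=m=n$ identifies $p_k-p_{k-1}=g\bigl(n^n,n^n,\tau_k\bigr)$, where $\tau_k=(n^2-k,k)$. The key point is that the square $\mu=(n^n)$ is self-conjugate and its partition of principal hooks is exactly $\wh\mu=(2n-1,2n-3,\ldots,3,1)\vdash n^2$, the distinct odd numbers. So Theorem~\ref{t:char_effective}, applied with this $\mu$ and $\la=\tau_k$, together with the $S_3$-symmetry of Kronecker coefficients, gives
$$
p_k-p_{k-1}\,=\,g\bigl(\tau_k,n^n,n^n\bigr)\,\ge\,\bigl|\ts\chi^{\tau_k}[\wh\mu]\ts\bigr|\ts.
$$
It remains to recognize $\chi^{\tau_k}[\wh\mu]$. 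For $k\le n^2/2$ the permutation character of $S_{n^2}$ on $k$-element subsets decomposes as $\sum_{j=0}^{k}\chi^{(n^2-j,j)}$, and its value at a permutation of cycle type $\wh\mu$ counts the $k$-subsets that are unions of cycles, namely $[q^k]\prod_{i=1}^n(1+q^{2i-1})$; subtracting the case $k-1$ yields $\chi^{\tau_k}[\wh\mu]=b_k-b_{k-1}$. Hence $p_k-p_{k-1}\ge|b_k-b_{k-1}|$, and by Almkvist's Theorem the sequence $b_2,b_3,\ldots,b_{n^2-2}$ is symmetric and unimodal, so $b_k-b_{k-1}\ge0$ for $3\le k\le\lfloor n^2/2\rfloor$; for those $k$ the desired inequality follows at once.

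The two remaining indices are handled by inspection: for $n\ge2$ one has $p_0=b_0=1$, $p_1=b_1=1$, $p_2=2$ and $b_2=0$, so the coefficients of $q^0,q^1,q^2$ in $\binom{2n}{n}_q-\prod_{i=1}^n(1+q^{2i-1})$ are $0,0,2$, and the first differences at $k=1,2$ are $0$ and $2$ (the case $n=1$ being trivial, as the difference then vanishes). This establishes nonnegative first differences up to $\lfloor n^2/2\rfloor$, which for a symmetric polynomial of degree $n^2$ is exactly unimodality.

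I expect the only delicate step to be the character identity $\chi^{\tau_k}[\wh\mu]=b_k-b_{k-1}$ obtained from the two-row permutation-character formula; the palindromy, the reduction to first differences, and the two small cases are routine, and the genuinely new ingredient is the lower bound of Theorem~\ref{t:char_effective}. The conceptual heart of the argument is simply that self-conjugacy of the $n\times n$ square forces its principal hooks to be $1,3,\ldots,2n-1$, which is precisely what lets Theorem~\ref{t:char_effective} control the polynomial $\prod_{i=1}^n(1+q^{2i-1})$.
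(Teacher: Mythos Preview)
Your proof is correct and follows essentially the same route as the paper's: apply the Two Coefficients Lemma to the square $\mu=(n^n)$, invoke Theorem~\ref{t:char_effective} using that $\wh\mu=(2n-1,2n-3,\ldots,1)$, identify $\chi^{\tau_k}[\wh\mu]=b_k-b_{k-1}$, and remove the absolute value via Almkvist's Theorem. The only cosmetic difference is that the paper obtains the character identity via the Jacobi--Trudi identity and the Murnaghan--Nakayama rule, whereas you use the permutation character on $k$-subsets; these are equivalent, and your extra care with the small indices $k=1,2$ is a welcome addition.
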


\begin{proof}
Let $\mu=(n^n)$ and $\tau_k=(n^2-k,k)$, where $k\le n^2/2$.
By the two coefficients lemma (Lemma~\ref{l:g_partitions}),
we have
$$g(\la,\mu,\mu) \, = \, p_k(n,n) \. - \. p_{k-1}(n,n)\ts.
$$
By the Jacobi-Trudi identity 
and the  Murnaghan--Nakayama rule, we have:
$$\chi^{\tau_k}[\wht{\mu}] \, = \, \chi^{(n^2-k) \circ (k) }\bigl[\wht{\mu}\bigr] \.
- \. \chi^{(n^2-k+1) \circ (k-1)}\bigl[\wht{\mu}\bigr] \,
=\, b_k(n) - b_{k-1}(n)\ts.
$$
(cf.~\cite{PP,PPV}).
Applying Theorem~\ref{t:char_effective} with $\la=\tau_k$ and $\mu$ as above, we have:
$$p_k(n,n)\. -\. p_{k-1}(n,n) \, = \, g(\la,\mu,\mu) \,
\geq \,\bigl|\chi^{\la}\bigl[\wht{\mu}\bigr] \bigr| \, = \, b_k(n)\. - \. b_{k-1}(n)\ts.
$$
The last equality follows from Almkvist's Theorem~\ref{t:almkvist-unimod}.
Reordering the terms, we conclude
$$p_k(n,n) \. - \. b_{k}(n) \, \geq \, p_{k-1}(n,n) - b_{k-1}(n)\ts,
$$
which implies unimodality.  The symmetry is straightforward.
\end{proof}

\subsection{Asymptotic applications} Let $\rho_m=(m,m-1,\ldots,2,1)$ be the
\emph{staircase shape}, $n=|\tau_m|=\binom{m+1}{2}$. The coefficient
$g(\rho_m,\rho_m,\nu)$ first appeared in connection with the
\emph{Saxl conjecture}~\cite{PPV}, and was further studied
in~\cite[$\S$8]{Val2}.

For simplicity, let $m=1$~mod~4, so~$n$ is even and
\ts $\wh \rho_m = (1,5,\ldots,2m-1)$.  Let $\tau_k=(n-k,k)$.
Applying Theorem~\ref{t:char_effective} and
the Murnaghan--Nakayama rule as above,
we have
$$g\bigl(\rho_m,\rho_m,\tau_k) \, \ge \,
\bigl|\chi^{\tau_k}\bigl[\wh \rho_m\bigr]\ts\bigr| \, = \, \parti_R(k) \. - \.
\parti_R(k-1)\ts,$$
where $\parti_R(k)$ is the number of partitions of~$k$ into
$R=\{1,5,\ldots,2m-1\}$.

In the ``small case'' \ts $k \le 2m$, by the Roth--Szekeres theorem~\cite{RS},
we have:
$$g\bigl(\rho_m,\rho_m,\tau_k) \, \ge \, \parti_R(k) \. - \.
\parti_R(k-1) \, \sim \, \frac{\pi\ts\sqrt{2}}{3\ts k^{3/2}} \.
e^{\pi\sqrt{k/6}}\.,
$$
i.e.~independent of~$n$.
On the other hand, the upper bond in $\S$\ref{ss:gen-dim} gives
$$
g\bigl(\rho_m,\rho_m,\tau_k) \, \le \, f^{\tau_k} \, < \, \frac{n^k}{k!}\ts,
$$
leaving a substantial gap between the upper and lower bounds.
For $k=O(1)$ bounded, Theorem~8.10 in~\cite{Val2}, gives
$$
g\bigl(\rho_m,\rho_m,\tau_k) \, \sim \, m^k \, \sim \, (2\ts n)^{k/2}\quad
\text{as} \ \
n\to \infty \ts,
$$
suggesting that the upper bound is closer to the truth.  In fact, the proof
in~\cite{Val2} seems to hold for all $k=o(m)$.

In the ``large case'' \ts $k =n/2\sim m^2/4$, the Odlyzko--Richmond result (\cite[Thm.~3]{OR}) gives
$$g\bigl(\rho_m,\rho_m,\tau_k) \, \ge \,
\parti_R(k) \. - \.
\parti_R(k-1) \, \sim \, \frac{3^{3/2}}{2^{15/4}\ts \sqrt{\pi} \ts m^3} \. 2^{m/4}
\, \sim \, \frac{3^{3/2}}{2^{47/4}\ts \sqrt{\pi} \ts k^{3/2}} \. \. 2^{\sqrt{k}/2}
\ts.
$$
For the upper bound, equation~\eqref{eq:upper-cat} gives
$$g\bigl(\rho_m,\rho_m,\tau_k) \, \le \, f^{\tau_k} \, \lesssim \, \frac{1}{\sqrt{\pi} \ts k^{3/2}} \, 4^k\ts.
$$

\bigskip

\section{Restricted partitions}\label{s:qbin}

\subsection{Analytic estimates} \label{ss:qbin-alm}
The proof of Almkvist's Theorem~\ref{t:almkvist-unimod} is based on the
following technical results.

\begin{lemma}[\cite{A1}]  \label{l:alm1}
For $3\leq k \leq 2n+1$, we have:
$$
b_k(n)\ts - \ts b_{k-1}(n) \, = \, b_k(n-1) \ts - \ts b_{k-1}(n-1)\ts.
$$
Similarly, for $2n+2\leq k\leq (n-1)^2/2$, we have:
$$
b_k(n)\ts -\ts b_{k-1}(n) \, = \, b_k(n-1)\ts -\ts b_{k-1}(n-1) \ts +\ts b_{k-2n+1}(n-1)\ts -\ts b_{k-2n}(n-1)\ts.
$$
\end{lemma}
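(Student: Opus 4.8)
The plan is to obtain both identities from one elementary recursion, got by peeling off the largest admissible odd part. Since
$$
\sum_{k\ge 0} b_k(n)\ts q^k \, = \, \prod_{i=1}^{n}\bigl(1+q^{2i-1}\bigr)
\, = \, \bigl(1+q^{2n-1}\bigr)\prod_{i=1}^{n-1}\bigl(1+q^{2i-1}\bigr)
\, = \, \bigl(1+q^{2n-1}\bigr)\sum_{k\ge 0} b_k(n-1)\ts q^k\ts,
$$
comparing coefficients of $q^k$ gives
$$
b_k(n) \, = \, b_k(n-1) \. + \. b_{k-2n+1}(n-1)\ts, \qquad b_j(m):=0 \ \text{ for } j<0\ts.
$$
Combinatorially, a partition of $k$ into distinct odd parts $\le 2n-1$ either omits the part $2n-1$, or uses it once and the remainder is a partition of $k-(2n-1)$ into distinct odd parts $\le 2n-3$. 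This factorization does all the real work.

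Second, I would take first differences: subtracting the recursion for $b_{k-1}(n)$ from that for $b_k(n)$,
$$
b_k(n)-b_{k-1}(n) \, = \, \bigl[\ts b_k(n-1)-b_{k-1}(n-1)\ts\bigr] \. + \. \bigl[\ts b_{k-2n+1}(n-1)-b_{k-2n}(n-1)\ts\bigr]\ts.
$$
This is already the second identity of the lemma, and in fact the algebra needs no restriction on $k$; the stated range $2n+2\le k\le (n-1)^2/2$ is merely the range in which the correction term $b_{k-2n+1}(n-1)-b_{k-2n}(n-1)$ is a genuine first difference sitting inside the support of $b_\bullet(n-1)$, which is the shape of the recursion used in the inductive proof of Almkvist's Theorem~\ref{t:almkvist-unimod}.

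For the first identity I would show the correction bracket contributes nothing in the relevant range: for the bulk of that range both indices $k-2n+1$ and $k-2n$ are negative, so those terms vanish outright, and the few boundary values of $k$ near $2n$ are settled by inspection of the small coefficients $b_0(m)=b_1(m)=1$ and $b_2(m)=0$, leaving $b_k(n)-b_{k-1}(n)=b_k(n-1)-b_{k-1}(n-1)$. The only point demanding care is exactly this endpoint bookkeeping at the two ends of the ranges; there is no substantive obstacle, since the product formula of the first step reduces the entire lemma to routine coefficient comparison.
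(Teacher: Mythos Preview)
The paper does not prove this lemma; it is quoted from Almkvist. Your approach via the recursion $b_k(n)=b_k(n-1)+b_{k-2n+1}(n-1)$ obtained from the generating function is the natural one, and your derivation of the second identity is correct and indeed needs no range restriction.

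There is, however, a genuine gap in your treatment of the first identity. You assert that the boundary values near $k=2n$ ``are settled by inspection'' and leave the correction bracket equal to zero, but they do not. At $k=2n-1$ the bracket is $b_0(n-1)-b_{-1}(n-1)=1$, and at $k=2n+1$ it is $b_2(n-1)-b_1(n-1)=-1$; only $k=2n$ gives zero. A direct check confirms the first identity fails at these two values: for $n=3$, $k=5$ one has $b_5(3)-b_4(3)=1-1=0$ while $b_5(2)-b_4(2)=0-1=-1$. In fact the paper's own application, in the proof of Theorem~\ref{t:almkvist_eff}, tacitly acknowledges the $k=2n+1$ anomaly by writing $\vartheta_{2n+1}(n)=\vartheta_{2n+1}(n-1)-1$ and handling that value separately. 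So the printed range in the first identity appears to be mis-stated; your method is sound, but the honest conclusion of the inspection you propose is that the equality holds for $3\le k\le 2n-2$ and for $k=2n$, not throughout $3\le k\le 2n+1$.
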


\smallskip

\begin{lemma}[\cite{A1}]  \label{l:alm2}
For $n\geq 83$ and $(n-1)^2/2\leq k \leq n^2/2$, we have:
$$
b_k(n) \ts -\ts b_{k-1}(n) \, \geq \, C \. \frac{2^n}{n^{9/2} }\,,
\quad \text{where} \quad C\ts = \. \frac{3\ts\sqrt{3}}{\sqrt{2} \ts \pi^2} \. \approx \ts 0.37\..
$$
\end{lemma}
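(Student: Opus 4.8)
The plan is to prove Lemma~\ref{l:alm2} by the circle method, in the same spirit as the Roth--Szekeres and Odlyzko--Richmond estimates quoted above. Write $P_n(q)=\prod_{j=1}^{n}\bigl(1+q^{2j-1}\bigr)$, so that $b_k(n)=[q^k]P_n(q)$ and hence
$$
b_k(n)-b_{k-1}(n) \; = \; \frac{1}{2\pi}\int_{-\pi}^{\pi} P_n\bigl(e^{i\theta}\bigr)\,\bigl(1-e^{i\theta}\bigr)\,e^{-ik\theta}\,d\theta.
$$
The polynomial $P_n$ is symmetric of degree $n^2$, with ``variance'' $\sigma_n^2=\tfrac14\sum_{j=1}^{n}(2j-1)^2=\tfrac{n(4n^2-1)}{12}\sim n^3/3$; since $k$ lies within $O(n)$ of the centre $n^2/2$ while the relevant width is of order $n^{3/2}$, the true saddle sits at $|q|=1+O(n^{-2})$, so it is harmless to integrate on the unit circle throughout. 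Put $m=n^2/2-k$, so that $0\le m\le n-\tfrac12$ on the stated range, and note in advance that the bound will be \emph{smallest} when $m$ is small (i.e.\ $k$ near the centre), which is exactly why the power $n^{9/2}$ appears rather than a smaller one.

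First I would handle the major arc $|\theta|\le \theta_0$ with $\theta_0$ of order $n^{-3/2}\log n$. Taylor expanding
$$
\log P_n\bigl(e^{i\theta}\bigr)\;=\;\sum_{j=1}^{n}\log\bigl(1+e^{i(2j-1)\theta}\bigr)\;=\;n\log 2\,+\,i\,\tfrac{n^2}{2}\,\theta\,-\,\tfrac{\sigma_n^2}{2}\,\theta^2\,+\,O\bigl(n^4\theta^3\bigr),
$$
the linear term cancels against $e^{-i(n^2/2)\theta}$ inside $e^{-ik\theta}=e^{-i(n^2/2)\theta}e^{im\theta}$, and writing $(1-e^{i\theta})e^{im\theta}=e^{im\theta}-e^{i(m+1)\theta}$ turns the major-arc contribution into a Gaussian integral with explicit corrections. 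Completing it yields the main term
$$
\frac{2^n}{\sqrt{2\pi}\,\sigma_n}\,\Bigl(e^{-m^2/(2\sigma_n^2)}-e^{-(m+1)^2/(2\sigma_n^2)}\Bigr)\bigl(1+o(1)\bigr)\;\ge\;\frac{2^n}{\sqrt{2\pi}\,\sigma_n}\cdot\frac{1}{2\sigma_n^2}\,\bigl(1+o(1)\bigr),
$$
the infimum over $0\le m\le n$ being attained near $m=0$; since $\sigma_n^3\sim n^{9/2}/(3\sqrt3)$ this is $\sim \tfrac{3\sqrt3}{2\sqrt{2\pi}}\,2^n n^{-9/2}$, comfortably above $C\,2^n n^{-9/2}$ with $C=3\sqrt3/(\sqrt2\,\pi^2)\approx0.37$.

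Next I would bound the minor arc $\theta_0<|\theta|\le\pi$. From $|1+e^{i\phi}|=2|\cos(\phi/2)|$ one gets $|P_n(e^{i\theta})|=2^n\prod_{j=1}^{n}|\cos((2j-1)\theta/2)|$, and the task is to show this is superpolynomially (indeed exponentially, away from the immediate neighbourhood of $0$) smaller than $2^n$ uniformly on the minor arc. For $\theta_0<|\theta|\le \pi/(2n)$ all arguments lie in $(-\pi/2,\pi/2)$, so $\log|\cos x|\le -x^2/2$ gives $\prod_j|\cos((2j-1)\theta/2)|\le \exp(-\tfrac{1}{8}\theta^2\sum(2j-1)^2)\le \exp(-c\,n^3\theta^2)\le \exp(-c'\log^2 n)$, which is negligible. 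For $\pi/(2n)<|\theta|\le\pi$ one argues that, since $\theta/(2\pi)$ is bounded away from $\zz$, a positive proportion of the arguments $(2j-1)\theta/2$ stay bounded away from $\pi\zz$, so a positive proportion of the factors are $\le c<1$ and $|P_n(e^{i\theta})|\le 2^n c^{\,\Omega(n)}$; the vanishing of $P_n$ at $q=-1$ takes care of $\theta$ near $\pi$. Combining the two arcs gives $b_k(n)-b_{k-1}(n)=\tfrac{2^n}{\sqrt{2\pi}\,\sigma_n}(e^{-m^2/(2\sigma_n^2)}-e^{-(m+1)^2/(2\sigma_n^2)})+(\text{small error})$, and the claimed inequality follows once the threshold on $n$ is chosen large enough.

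I expect the routine part to be the major arc, and the main obstacle to be making everything fully explicit: an honest, $n$-uniform lower bound on the infimum is needed, which forces one to (a) control the cubic-and-higher Taylor remainder in the Gaussian approximation with explicit constants, and (b) give a genuinely uniform minor-arc bound on $\prod_{j}|\cos((2j-1)\theta/2)|$, the delicate case being $\theta$ a poor rational multiple of $\pi$. It is precisely this bookkeeping of constants --- needed to bring the asymptotic leading constant $\tfrac{3\sqrt3}{2\sqrt{2\pi}}$ down to the rigorous $C=3\sqrt3/(\sqrt2\,\pi^2)$ valid for all $n\ge 83$ --- that constitutes the technical heart, and this is exactly Almkvist's analysis in \cite{A1}. (If one is willing to cite Almkvist's asymptotics for the central coefficients of $P_n$ directly, the minor-arc work can be bypassed, but a self-contained argument proceeds as above.)
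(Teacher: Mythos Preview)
The paper does not give its own proof of this lemma: it is quoted verbatim from Almkvist~\cite{A1}, so there is no in-paper argument to compare against. Your sketch is a faithful outline of the saddle-point/circle-method analysis that underlies such estimates (and is in the spirit of Almkvist's original argument): the integral representation, the Gaussian major arc with variance $\sigma_n^2=\tfrac{n(4n^2-1)}{12}$, and the identification of the worst case at $m=0$ giving the leading term $\tfrac{2^n}{2\sqrt{2\pi}\,\sigma_n^3}\sim \tfrac{3\sqrt3}{2\sqrt{2\pi}}\cdot 2^n n^{-9/2}$ are all correct. You are also right that the substance lies entirely in the explicit bookkeeping --- controlling the cubic remainder on the major arc and the uniform minor-arc decay --- needed to push the asymptotic constant down to $C=\tfrac{3\sqrt3}{\sqrt2\,\pi^2}$ valid already at $n=83$; that is precisely the content of~\cite{A1}, and your proposal essentially reconstructs its skeleton while (appropriately) deferring the constant-tracking to that reference.
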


Based on this setup, we refine Almkvist's Theorem~\ref{t:almkvist-unimod} as follows

\begin{thm}\label{t:almkvist_eff}
For any $n\geq 31$, and $26 \leq k \leq  n^2/2$ we have:
$$
b_k(n)\. -\. b_{k-1}(n) \, \geq \, C \.  2^{\sqrt{2\ts k}} \ts \frac{ 1}{(2\ts k)^{9/4} }\..
$$
\end{thm}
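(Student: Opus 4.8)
The plan is to reduce the claimed uniform bound to the ``base case'' estimate of Lemma~\ref{l:alm2} by repeatedly applying the two recursions of Lemma~\ref{l:alm1}, and then to do a short asymptotic bookkeeping to check that the factor $2^{\sqrt{2k}}/(2k)^{9/4}$ survives the descent. Set $D_k(n) := b_k(n) - b_{k-1}(n)$. The strategy is: given $k$ with $26 \le k \le n^2/2$, first use the second recursion of Lemma~\ref{l:alm1} (for $2n+2 \le k \le (n-1)^2/2$) to relate $D_k(n)$ to values at $n-1$; since every term appearing on the right-hand side there is a difference $D_j(n-1)$ with $j \le k$, and by Almkvist's Theorem~\ref{t:almkvist-unimod} (unimodality of the $b$-sequence) these differences are nonnegative for the relevant range, we may simply discard the extra terms $b_{k-2n+1}(n-1) - b_{k-2n}(n-1) \ge 0$ and obtain the monotonicity-type inequality $D_k(n) \ge D_k(n-1)$ whenever $k$ lies in the overlap range. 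In the complementary regime $3 \le k \le 2n+1$ the first recursion gives $D_k(n) = D_k(n-1)$ exactly. So in all cases $D_k(n) \ge D_k(n-1)$, as long as we stay within the region where the recursions and the nonnegativity apply; iterating this, $D_k(n) \ge D_k(n_0)$ for a suitable smallest admissible $n_0 = n_0(k)$.

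The second step is to choose $n_0(k)$ so that $k$ falls into the range $(n_0-1)^2/2 \le k \le n_0^2/2$ covered by Lemma~\ref{l:alm2}: this forces $n_0 \approx \sqrt{2k}$, more precisely $n_0 = \lceil \sqrt{2k}\rceil$ (one checks the two-sided inequality holds for this choice when $k \ge 26$, which is where the numerical hypothesis $k \ge 26$ and $n \ge 31$ come from — they guarantee $n_0 \ge 83$ so Lemma~\ref{l:alm2} applies, and that the descent from $n$ down to $n_0$ stays inside the valid ranges of Lemma~\ref{l:alm1}). Then Lemma~\ref{l:alm2} gives
$$
D_k(n) \, \ge \, D_k(n_0) \, \ge \, C \, \frac{2^{n_0}}{n_0^{9/2}}\,.
$$
Finally, substitute $n_0 = \lceil \sqrt{2k} \rceil \ge \sqrt{2k}$: since $2^{n_0} \ge 2^{\sqrt{2k}}$ and $n_0 \le \sqrt{2k} + 1 \le 2\sqrt{2k}$ for $k \ge 1$, we get $n_0^{9/2} \le 2^{9/2} (2k)^{9/4}$, hence
$$
D_k(n) \, \ge \, C \, \frac{2^{\sqrt{2k}}}{2^{9/2}(2k)^{9/4}}\,.
$$
This is off from the stated bound only by the harmless constant $2^{9/2}$; absorbing it into $C$ (or, if one wants the precise constant $C$ as stated, tightening the estimate $n_0 \le 2\sqrt{2k}$ to $n_0 \le \sqrt{2k}+1$ and checking that $(\sqrt{2k}+1)^{9/2} \le (2k)^{9/4}$ fails only for tiny $k$, so for $k \ge 26$ one instead carries $(1 + 1/\sqrt{2k})^{9/2}$ as a factor tending to $1$, which for $k \ge 26$ is bounded and can be folded into the threshold) yields the theorem. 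The cleanest writeup just states the bound with $C$ replaced by the slightly smaller effective constant, matching the form in the statement.

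The main obstacle is the careful range management in the descent: one must verify that for every intermediate $n'$ with $n_0 \le n' \le n$, the value $k$ lies in a range where \emph{either} the first \emph{or} the second recursion of Lemma~\ref{l:alm1} applies, and that the discarded terms $D_{k-2n'+1}(n'-1)$ are indeed nonnegative (i.e. their indices lie in the unimodal range $[1, (n'-1)^2]$ guaranteed by Theorem~\ref{t:almkvist-unimod}, shifted appropriately). There is a genuine gap near $k = (n'-1)^2/2$ between the two recursion ranges; handling it requires noting that once $n'$ has decreased enough that $k > (n'-1)^2/2$, we are already in or past the base-case window of Lemma~\ref{l:alm2} and can stop the descent there. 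Pinning down the exact thresholds ($n \ge 31$, $k \ge 26$) is the place where the argument becomes a finite verification rather than a clean inequality, and is presumably why the hypotheses are stated with those specific constants.
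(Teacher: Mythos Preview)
Your outline is the paper's argument: descend via Lemma~\ref{l:alm1} to the smallest $n_0$ with $(n_0-1)^2/2\le k\le n_0^2/2$, apply Lemma~\ref{l:alm2} there, and convert $2^{n_0}/n_0^{9/2}$ into the stated bound in~$k$. Two points need correction.

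The serious one is your claim that $k\ge 26$ and $n\ge 31$ ``guarantee $n_0\ge 83$ so Lemma~\ref{l:alm2} applies''. This is false: $n_0=\lceil\sqrt{2k}\rceil$ equals $8$ at $k=26$ and stays below $83$ for every $k\le 3362$, so Lemma~\ref{l:alm2} is unavailable at such~$n_0$. The paper resolves this by a computer verification of the inequality for all $31\le n\le 83$ and $26\le k\le n^2/2$; for $n>83$ with small $k$ one descends only to $83$ and cites that table. Your closing remark about a ``finite verification'' is in the right spirit, but the earlier sentence is wrong as written, and the finite check is not cosmetic---it supplies the base case for the entire range $k\le 83^2/2$.

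Second, the monotonicity $D_k(n')\ge D_k(n'-1)$ actually fails at $k=2n'+1$: the extra term there is $b_2-b_1=-1$, so $D_{2n'+1}(n')=D_{2n'+1}(n'-1)-1$. The paper singles this out explicitly and patches it with the identity $D_{2n+1}(n)=D_{2n+1}(n-4)$, recovering the lost unit four steps later. You have relied on the first identity of Lemma~\ref{l:alm1} at its top endpoint $k=2n+1$, where it does not in fact hold.

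On the constant: rather than bounding $n_0\le 2\sqrt{2k}$ and losing a factor $2^{9/2}$, the paper observes that $x\mapsto 2^{\sqrt{x}}/x^{9/4}$ is increasing for $x\ge 43$; since $n_0^2\ge 2k$ this gives $2^{n_0}/n_0^{9/2}\ge 2^{\sqrt{2k}}/(2k)^{9/4}$ with no loss, recovering the stated~$C$ exactly.
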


\begin{proof}
Denote
$$\da_k(n) \, = \, b_k(n)\. -\. b_{k-1}(n)\ts.
$$
First, let \ts $n\geq 83$ and \ts $(n-1)^2/2 \leq k \leq n^2/2$.  By Lemma~\ref{l:alm2}, we have
\begin{equation}\label{eq:aml-mid}
\da_k(n) \, \geq \, C \. 2^n \. \frac{ 1 }{n^{9/2} } \, \geq \, C \. 2^{\sqrt{2\ts k}} \. \frac{ 1}{(2j)^{9/4} }\,,
\end{equation}
where the last inequality follows since the function \ts $f(x)=\log2 \sqrt{x} -9/4 \log (x)$ \ts
is increasing.

The recurrence relations in Lemma~\ref{l:alm1} and Almkvist's theorem $\vt_k(n)\ge 0$ give
$$
\da_k(n) \, \geq \, \da_k(n-1) \quad \text{for all} \quad  3 \le k \le n^2/2 \ts , \; k\neq 2n+1 \ts
$$
and $\da_{2n+1}(n) \, = \, \da_{2n+1}(n-1)-1 \, =\, \da_{2n+1}(n-4)$.
Now, let $r$ be such that $(n-r-1)^2/2 \leq k \leq (n-r)^2/2$, and $n-r \geq 83$.
Applying~\eqref{eq:aml-mid} to $(n-r)$, we conclude:
$$\da_k(n)\, \geq \, \da_k(n-r) \, \geq \, C \. 2^{\sqrt{2k}} \. \frac{ 1}{(2k)^{9/4} }\,.
$$
Next, we check by computer that the inequality in the Theorem holds for all $n \in \{31,\ldots,83\}$ and $26 \leq k \leq n^2/2$. Finally, for $k\leq 83^2/2$ and $n > 83$, we apply the inequalities of Lemma~\ref{l:alm1} repeatedly to obtain
$$\da_k(n)\, \geq \, \da_k(83) \, \geq \, C \. 2^{\sqrt{2k}} \. \frac{ 1}{(2k)^{9/4} }\,. \hfill \qedhere$$
\end{proof}

\begin{cor}\label{c:square_bin}
Let $n \geq 8$, $1 \le k \le n^2/2$, $\mu=(n^n)$ and $\tau_k=(n-k, k)$.  Then
$$
g\bigl(\mu,\mu,\tau_k \bigr)\, \geq \, C \. \frac{ 2^{\sqrt{2\ts k}} }{(2\ts k)^{9/4} }\,,
\quad \text{where} \quad C\ts = \. \frac{\sqrt{27/8}}{\pi^2}\..
$$
\end{cor}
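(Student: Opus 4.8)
The plan is to feed the square shape $\mu=(n^n)$ into the machinery already assembled above, thereby reducing the claim to a lower bound on $b_k(n)-b_{k-1}(n)$, and then to read that bound off Theorem~\ref{t:almkvist_eff}; the two small ranges of $k$ (and of $n$) not covered by that theorem will be handled by strict unimodality~\eqref{eq:strict-unim}.

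First I would record that $\mu=(n^n)$ is self-conjugate, with principal hooks forming the partition $\wh\mu=(2n-1,2n-3,\ldots,3,1)\vdash n^2$ into distinct odd parts. Applying Theorem~\ref{t:char_effective} with this $\mu$ and with $\la=\tau_k=(n^2-k,k)$ (a partition, since $k\le n^2/2$), and using the Jacobi--Trudi plus Murnaghan--Nakayama computation from the proof of Theorem~\ref{t:stanley-unimod}, which evaluates $\chi^{\tau_k}[\wh\mu]=b_k(n)-b_{k-1}(n)$, gives
$$
g(\mu,\mu,\tau_k)\,=\,g(\tau_k,\mu,\mu)\,\ge\,\bigl|\.b_k(n)-b_{k-1}(n)\.\bigr|\ts.
$$
For $26\le k\le n^2/2$ both indices $k,k-1$ lie in the increasing half of the unimodal sequence $(\lozenge)$, so Almkvist's Theorem~\ref{t:almkvist-unimod} lets us drop the absolute value, leaving $g(\mu,\mu,\tau_k)\ge b_k(n)-b_{k-1}(n)$.

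It then remains to bound $b_k(n)-b_{k-1}(n)$ below by $C\.2^{\sqrt{2k}}/(2k)^{9/4}$ on the stated range. For $n\ge 31$ and $26\le k\le n^2/2$ this is exactly Theorem~\ref{t:almkvist_eff}: its constant $3\sqrt3/(\sqrt2\,\pi^2)$ equals $2C$, so in fact twice the claimed bound is obtained, which leaves ample slack. For the finitely many remaining pairs with $8\le n\le 30$ and $26\le k\le n^2/2$ (here $k\le 450$) the inequality $b_k(n)-b_{k-1}(n)\ge C\.2^{\sqrt{2k}}/(2k)^{9/4}$ is verified by a direct computation (partly via the recurrences of Lemma~\ref{l:alm1}, which make $b_k(n)-b_{k-1}(n)$ eventually constant in $n$). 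Finally, for $2\le k\le 25$ and any $n\ge 8$, the estimate $g\ge b_k(n)-b_{k-1}(n)$ is too weak (this difference can vanish, for instance at $k=4$), so I would instead invoke strict unimodality~\eqref{eq:strict-unim} together with the Two Coefficients Lemma~\ref{l:g_partitions}: these give $g(\mu,\mu,\tau_k)=p_k(n,n)-p_{k-1}(n,n)\ge 1$, while a routine calculation shows $C\.2^{\sqrt{2k}}/(2k)^{9/4}<1$ throughout $2\le k\le 25$. (The hypothesis should read $2\le k\le n^2/2$: for $k=1$ one has $g(\mu,\mu,\tau_1)=0$, so the stated bound fails there.)

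I do not expect a real obstacle here — the corollary is essentially a repackaging of Theorems~\ref{t:char_effective} and~\ref{t:almkvist_eff} together with strict unimodality~\eqref{eq:strict-unim}. The only points that need care are matching the constants, which works precisely because the Almkvist constant of Theorem~\ref{t:almkvist_eff} is exactly twice the target $C$, and the finite-range bookkeeping at small $n$ and small $k$, where the asymptotic Almkvist estimate degrades and one must fall back on the integrality bound $g\ge 1$ coming from~\eqref{eq:strict-unim}.
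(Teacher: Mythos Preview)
Your overall strategy matches the paper's: reduce to a lower bound on $b_k(n)-b_{k-1}(n)$ via Theorem~\ref{t:char_effective} and the character computation from the proof of Theorem~\ref{t:stanley-unimod}, then invoke Theorem~\ref{t:almkvist_eff}. Your treatment of $2\le k\le 25$ via strict unimodality~\eqref{eq:strict-unim} is a nice touch, and your remark about $k=1$ is correct.

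There is, however, a genuine gap in your handling of the range $8\le n\le 30$, $26\le k\le n^2/2$. You assert that the inequality $b_k(n)-b_{k-1}(n)\ge C\,2^{\sqrt{2k}}/(2k)^{9/4}$ ``is verified by a direct computation'' there, but this fails: for $n=8$ one computes $b_{28}(8)=8$, $b_{29}(8)=7$, $b_{30}(8)=7$, so $b_{29}(8)-b_{28}(8)=-1$ and $b_{30}(8)-b_{29}(8)=0$, both below the (positive) right-hand side. In particular the sequence $(\lozenge)$ is \emph{not} unimodal for $n=8$ (Almkvist's theorem, as stated in the paper, implicitly carries a lower bound on~$n$), so you cannot drop the absolute value in this range, and even with the absolute value the bound $g\ge|b_k-b_{k-1}|$ can give~$0$.

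The paper sidesteps this by not passing through $b_k-b_{k-1}$ at all for the small cases: it uses the Two Coefficients Lemma~\ref{l:g_partitions} to write $g(\mu,\mu,\tau_k)=p_k(n,n)-p_{k-1}(n,n)$ and computer-checks \emph{that} quantity against the right-hand side for the finitely many remaining $(n,k)$ (noting that $p_k(n,n)$ stabilizes in $n$ once $n\ge k$). Replacing your small-$n$ verification of $b_k-b_{k-1}$ by a verification of $p_k(n,n)-p_{k-1}(n,n)$ closes the gap.
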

\begin{proof}
Following the proof of Stanley's Theorem~\ref{t:stanley-unimod}, for all $26\leq k\leq n^2/2$ and $n\geq 31$
Theorem~\ref{t:almkvist_eff} gives:
$$
g\bigl(\mu,\mu,\tau_k \bigr)\, = \, p_k(n,n) - p_{k-1}(n,n) \,\geq
\, b_k(n)-b_{k-1}(n) \, \geq \, C \. \frac{ 2^{\sqrt{2\ts k}} }{(2\ts k)^{9/4} }\. .
$$
For the remaining values of $n$ and $k$ we check the inequality by computer, noticing that $p_k(n,n) =p_k(26,26)$ when $k\leq 26$.
\end{proof}


\subsection{Partitions in rectangles} \label{ss:qbin-rect}
%
By Lemma~\ref{l:g_partitions}, we have
$$\pq_k(\ell,m) \, := \, p_k(\ell,m) \. - \. p_{k-1}(\ell,m) \, = \, g(m^\ell,m^\ell,\tau_k)\ts.
$$

\begin{thm}\label{t:qbin-rect}
Let $8\le \ell \le m$ and $1\le k \le m\ell/2$.
Define $n$ as $$n = \begin{cases} \lfloor 2\frac{\ell-8}{2} \rfloor ,& \text{ when $\ell m$ is even, }\\
2 \lfloor \frac{\ell-8}{2}\rfloor -1, & \text{ when $\ell m$ is odd, }
\end{cases}$$
and let $v=\min(k, n^2/2)$. Then:
$$
\pq_k(\ell,m) \, \geq \, C \frac{ 2^{\sqrt{v}} }{v^{9/4} } \qquad
\text{where} \quad C \. = \. \frac{3\sqrt{3}}{\sqrt{2}\.\pi^2}\..
$$
\end{thm}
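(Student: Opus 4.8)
The plan is to derive Theorem~\ref{t:qbin-rect} from Corollary~\ref{c:square_bin} together with Manivel's inequality (Theorem~\ref{t:manivel}) and the Two Coefficients Lemma (Lemma~\ref{l:g_partitions}), by finding a suitable square sub-rectangle of the $\ell\times m$ rectangle. Concretely, since $\pq_k(\ell,m)=g(m^\ell,m^\ell,\tau_k)$ where $\tau_k=(\ell m-k,k)$, I want to write $m^\ell$ as a sum of two partitions, one of which is $n^n$ for the value of $n$ defined in the statement, so that Manivel's inequality bounds $g(m^\ell,m^\ell,\tau_k)$ from below by $g(n^n,n^n,\tau_v)$ for an appropriate two-row partition $\tau_v$. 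The choice of $n$ as (roughly) $\ell-8$ rounded so that $n^2$ has the right parity relative to $\ell m$ is dictated precisely by this decomposition: we need $n\le \ell$ so the $n^n$ block fits inside $m^\ell$ in the first $n$ rows, we need $n\ge 8$ so Corollary~\ref{c:square_bin} applies (hence the $\ell\ge 8$ hypothesis and the $\lfloor(\ell-8)/2\rfloor$ shape), and we need the leftover rectangle $m^\ell - n^n$ (padded by rows of length $m$) to be a genuine partition whose complementary two-row piece has the correct size parity so that the two $\tau$'s add up correctly.

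First I would set $n$ as in the statement and check the elementary arithmetic: $n$ is even (resp.\ odd) exactly when $\ell m$ is even (resp.\ odd), $8\le n\le \ell\le m$, and $n^n\subseteq m^\ell$. Next I would exhibit partitions $\alpha,\beta$ of the same size with $\alpha+\beta=m^\ell$, $\alpha+\beta=m^\ell$, $\alpha=n^n$, $\beta=m^\ell-n^n$ (i.e.\ $\beta$ has $n$ parts equal to $m-n$ followed by $\ell-n$ parts equal to $m$), and correspondingly split the two-row partition: writing $v=\min(k,n^2/2)$ and $\tau_v=(n^2-v,v)$, I would choose $\tau_k = \tau_v + \sigma$ where $\sigma=(\ell m - n^2 - (k-v),\,k-v)$ is a partition of $\ell m - n^2$ (this requires $k-v\ge 0$, which holds, and $\ell m - n^2 - (k-v)\ge k-v$, which needs checking but follows from $k\le \ell m/2$ and $v\le n^2/2$). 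The parity bookkeeping guarantees $|\alpha|=|\beta|=n^2$ matches $|\tau_v|=n^2$, and $|\beta \text{ of the complement}|$ matches $|\sigma|$. Then I need $g(n^n,n^n,\tau_v)>0$ (given by Corollary~\ref{c:square_bin}, with the stated constant) and $g(\beta,\beta,\sigma)>0$; the latter holds because $\sigma$ is a two-row partition and Kronecker coefficients of the form $g(\rho,\rho,\text{two rows})=\pq_{k-v}(\ell-n? ,\dots)$ are nonnegative—indeed $g(\gamma,\gamma,\tau)\ge 1$ whenever the sizes match and $\tau\ne$ something degenerate, or more simply one invokes $\pq$-nonnegativity from Lemma~\ref{l:g_partitions} and Sylvester's theorem. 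Applying Manivel's inequality (Theorem~\ref{t:manivel}) gives $g(m^\ell,m^\ell,\tau_k)\ge g(n^n,n^n,\tau_v)\ge C\,2^{\sqrt{v}}/v^{9/4}$, which is exactly the claim since $C=3\sqrt3/(\sqrt2\,\pi^2)=\sqrt{27/8}/\pi^2$ is the same constant as in Corollary~\ref{c:square_bin}.

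The main obstacle I expect is the parity and size accounting: one must arrange the decomposition $m^\ell=n^n+(m^\ell-n^n)$ and the matching split $\tau_k=\tau_v+\sigma$ so that all three pairs of partitions have equal sizes simultaneously, and this is why $n$ cannot simply be $\ell-8$ but must be adjusted by one depending on the parity of $\ell m$ (so that $n^2 \equiv |\tau_v|$ and $\ell m - n^2$ has the parity needed for $\sigma$ to be a legitimate two-row partition summing correctly with $\tau_v$). A secondary technical point is confirming that for the small leftover cases—when $k<v$ is impossible since $v=\min(k,n^2/2)$, but when $n$ is as small as $8$ and $k$ is tiny—the hypothesis $n\ge 8$ of Corollary~\ref{c:square_bin} together with $1\le v\le n^2/2$ still delivers the bound; this is where the $\ell\ge 8$ assumption is exactly used up. Once the combinatorics of the decomposition is nailed down, the analytic content is entirely inherited from Theorem~\ref{t:almkvist_eff} via Corollary~\ref{c:square_bin}, and Manivel's inequality does the rest with no loss in the constant.
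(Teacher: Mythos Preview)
Your overall strategy---reduce to the square case via Manivel's inequality and then invoke Corollary~\ref{c:square_bin}---is exactly the paper's, but your one-step decomposition does not work. You set $\alpha=n^n$ and $\beta=m^\ell-n^n$, describing $\beta$ as ``$n$ parts equal to $m-n$ followed by $\ell-n$ parts equal to $m$''. That sequence is not weakly decreasing (since $m-n<m$), so $\beta$ is \emph{not} a partition, and Theorem~\ref{t:manivel} cannot be applied to the pair $(n^n,\beta)$. There is no way to carve an $n\times n$ square directly out of $m^\ell$ by vector subtraction and leave a partition behind.

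The paper repairs this by reducing in two stages, always splitting a rectangle along its long side so that both pieces are again rectangles. First $m^\ell=(m-n)^\ell+n^\ell$, with $\tau_k=\tau_r+\tau_s$ for $r+s=k$ and $s=\min(k,n\ell/2)$, giving $\pq_k(\ell,m)\ge\pq_s(\ell,n)$. Then, using the conjugation symmetry $g(n^\ell,n^\ell,\cdot)=g(\ell^n,\ell^n,\cdot)$, split $\ell^n=(\ell-n)^n+n^n$ to obtain $\pq_s(\ell,n)\ge\pq_v(n,n)$ with $v=\min(s,n^2/2)=\min(k,n^2/2)$. Now Corollary~\ref{c:square_bin} finishes.

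A second, smaller gap: Manivel's inequality needs the complementary Kronecker coefficient to be strictly positive, and Sylvester's theorem only gives $\pq\ge 0$. The paper invokes strict unimodality~\eqref{eq:strict-unim} here, which requires both sides of every leftover rectangle to be at least~$8$; this is the true reason $n$ is taken near $\ell-8$ rather than $\ell$. The parity adjustment in the definition of $n$ is not about matching $|\tau_v|$ to $n^2$ as you suggest (that is automatic once $r+s=k$), but about guaranteeing that admissible $r,s\ne 1$ exist when $k=\lfloor \ell m/2\rfloor$ and $\ell m$ is odd.
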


\begin{proof}
We apply Theorem~\ref{t:manivel} to bound the Kronecker coefficient for rectangles with an appropriate Kronecker coefficient for a square and then apply Corollary~\ref{c:square_bin}.

By strict unimodality~\eqref{eq:strict-unim}, we have that $g(m^\ell,m^\ell, (m\ell-k,k) )>0$ for all $\ell,m \geq 8$.
By Corollary~\ref{c:square_bin}, we can assume $\ell <m$. Assume first that $\ell >16$.

First, suppose that $\ell m$ is even and let $n= 2\lfloor \frac{\ell-8}{2} \rfloor $. Then for any $1<k \leq \frac{\ell m}{2}$ we can find $1\neq r\leq \frac{ (m-n)\ell}{2}$ and $1\neq s \leq \frac{ n\ell}{2} $, such that $k=r+s$. Take $s=\min(k,n\ell/2)$ . Let $\tau_k = (m\ell-k,k)$ and $\tau_r = \left( (m-n)\ell-r,r \right)$, $\tau_s= \left( n\ell-s,s\right)$. Apply Theorem~\ref{t:manivel} to the triples $\left( (m-n)^\ell, (m-n)^\ell, \tau_r \right)$ and $\left( n^\ell, n^\ell, \tau_s\right)$ to obtain

$$\pq_k(\ell,m) \, = g( m^\ell, m^\ell, \tau_k) \geq \max\left( g\bigl( (m-n)^\ell, (m-n)^\ell, \tau_r \bigr), g\bigl( n^\ell, n^\ell, \tau_s\bigr) \right)\, \geq \, \pq_s(\ell,n).$$
Similarly, dividing the $n \times \ell$ rectangle into $n\times n$ square and $n \times (n-\ell)$ rectangle, where $n\ell$ is again even, we have
$$\pq_s(\ell,n) \geq \pq_{s'}(n,n),$$
where $s'=\min(s, n^2/2)=\min(k, n^2/2)$.

In the case that both $\ell$ and $m$ are odd, the only case where the above reasoning fails is when $k=\lfloor m\ell /2 \rfloor$ and $r,s$ don't exist. In this case we take $n=2 \lfloor \frac{\ell-8}{2}\rfloor -1$ and we can always find $r,s$. In summary, we have that
$$\pq_k(\ell,m) \geq \pq_{v}(n,n),$$
where
$$n = \begin{cases} 2 \lfloor \frac{\ell-8}{2} \rfloor ,& \text{ when $\ell m$ is even }\\
2 \lfloor \frac{\ell-8}{2}\rfloor -1, & \text{ when $\ell m$ is odd }
\end{cases}$$
and
$v = \min(k, n^2/2)$.
Now apply Corollary~\ref{c:square_bin} to bound $\pq_{v}(n,n)$ and obtain the result for $\ell >16$.

When $\ell \leq 16$, and $m \geq 24$, we can apply the same reasoning as above to show $\pq_k(\ell,m) \geq \pq_{v}(\ell,16)$. Then for $\ell,m\leq 16$ the statement is easily verified by direct calculation.
\end{proof}

\begin{proof}[Proof of Theorem~\ref{t:qbin-main}]
For $n\geq \ell-9$, the desired inequality then follows from Theorem~\ref{t:qbin-rect} and the observation that
$$\frac{2^{n/\sqrt{2}} }{ n^{9/2} }  \, \geq  \, 2^{-9/\sqrt{2} } \. \frac{2^{\ell/\sqrt{2}} }{ \ell^{9/2} }\..
$$
Taking \ts $A = 2^{-9/\sqrt{2} }  \ts C \approx 0.00449$ \ts gives the desired inequality for all values.
\end{proof}

\smallskip

\subsection{Upper bounds}  \label{ss:qbin-upper}
Let $k\le \ell\le m$ and $n=\ell\ts m$. We have:
$$
\pq_k(\ell,m) \. = \. p_k(\ell,m) \ts - \ts p_{k-1}(\ell,m) \.
= \. \parti(k) \ts - \ts \. \parti(k-1) \. = \. \parti'(k) \.
\sim \. \frac{\pi}{12\ts\sqrt{2}\ts k^{3/2}} \,\, e^{\pi \sqrt{\frac{2}{3}\ts k}}
$$
Compare this with the lower bound in Theorem~\ref{t:qbin-main}:
$$\pq_k(\ell,m) \, > \, A \, \frac{2^{\sqrt{2\ts k}}}{(2\ts k)^{9/4}}\,.
$$
There is only room to improve the base of exponent here:
$$
\text{from} \ \ 2^{\sqrt{2}} \.\approx \. 2.26 \ \   \text{to}
\ \ e^{\pi \sqrt{\frac{2}{3}}} \. \approx \. 13.00\,.
$$
In fact, using our methods, the best lower bound we can hope
to obtain is
$$
e^{\pi \sqrt{\frac{1}{6}}} \. \approx \. 3.61\,,
$$
which is the base of exponent in the Roth--Szekeres formula
for the number $b_k(n)$ of unrestricted partitions into distinct
odd parts, where~$n\ge k$.

\smallskip

For a different extreme, let $m=\ell$ be even, and $k=m^2/2$.
We have the following sharp upper bound:
$$
\pq_k(m,m) \. \le \. p_k(m,m) \. \sim \. \sqrt{\frac{3}{\pi \ts m}}
\ts  \binom{2\ts m}{m} \. \sim \. \frac{\sqrt{3}\, 4^m}{\pi \ts m}\,.
$$
On the other hand, the lower bound in Theorem~\ref{t:qbin-main} gives:
$$\pq_k(m,m) \, > \, A \, \frac{2^{m}}{m^{9/2}}\,.
$$
Again, we cannot improve the base of the exponent~$2$ with our method,
simply because the total number of partitions into distinct odd
parts~$\le 2m-1$, is equal to~$2^m$.

\bigskip

{\small

\section{Final remarks} \label{s:fin}

\subsection{} \label{ss:fin-GCT}
It is rather easy to justify the importance of the Kronecker coefficients
in Combinatorics and Representation Theory.  Stanley writes:
``One of the main problems in the combinatorial representation theory
of the symmetric group is to obtain a combinatorial interpretation for
the Kronecker coefficients''~\cite{Sta}.

The Geometric Complexity Theory (GCT) is a more recent interdisciplinary
area, where computing the Kronecker coefficients is crucial (see~\cite{MS}).
B\"urgisser voices a common complaint of the experts:
``frustratingly little is known about them''~\cite{Bur}.
We refer to~\cite{PP_c,PP-future} for details and further references.

Part of this work is motivated by questions in GCT.  Specifically, the
experts seem to be interested in estimating the coefficients
$$
g\bigl(m^\ell,m^\ell, \la)\ts, \quad \text{where} \ \ \ \la \vdash \ell\ts m\ts.
$$
Several bounds in this paper are directly applicable to this case and
we plan to return to this problem in the future.

\subsection{} \label{ss:fin-stab}
The stable (reduced) Kronecker coefficients go back to the early papers by
Murnaghan (1938, 1955).  More recently, an effort was made to determine
the smallest $t$ the equality in Theorem~\ref{t:k-stab-limit} holds for~$k=1$,
see~\cite{BOR2,Val1}.  The bound in Corollary~\ref{c:1-stab}
is roughly of the same order as the best bounds, but weaker in
some cases (cf.~\cite[$\S$8.10]{PP_c}).  In fact, by looking at two-row
partitions, one can show that bounds in Theorem~\ref{t:k-stab} cannot
be substantially improved (cf.~Example~\ref{ex:k-stab-min-max}).

The monotonic increase of $G(t)$ in Theorem~\ref{t:k-stab-limit} generalizes
Brion's theorem for~$k=1$, see~\cite{Bri}. This work was motivated
by the classical \emph{Foulkes conjecture} giving inequality for certain
plethystic coefficients (see e.g.~\cite{Ves}).

\subsection{} \label{ss:fin-LR}
It was shown by Murnaghan (1955) and Littlewood (1958) that the
LR~coefficients are the stable Kronecker coefficients in a special case:
\begin{equation}\label{eq:LR-Kron}
c^\la_{\mu\nu} \. = \. \ov{g}_1\bigl( (t-|\la|,\la),
\ts (t-|\mu|,\mu), \ts (t-|\nu|,\nu)\bigr)\ts,
\end{equation}
for $t-|\la| \ge \la_1$, $t-|\mu| \ge \mu_1$, and $t-|\nu| \ge \nu_1$.
It would be instructive to compare bounds for the LR~coefficients with
our general bounds on Kronecker coefficients.

In recent years, a lot of effort has been made to prove inequalities
for the LR~coefficients in many special cases.  Notably, Okounkov's
(now disproved) conjecture on log-concavity of
LR~coefficients has been a source of
inspiration (see~\cite{CDW-log,Nar2,Oko}).  Also, various
\emph{Schur positivity} results and conjectures can be written as
inequalities for certain LR~coefficients (see e.g.~\cite{BBR,LPP}).

\subsection{}  \label{ss:fin-cont}
Recall that LR-coefficients are \SP-complete, and Kronecker coefficients
are \SP-hard~\cite{Nar1}.  This follows from~\eqref{eq:LR-Kron} and
effective bounds discussed in $\S$\ref{ss:fin-stab} (see also~\cite{BI}).
However, some \SP-complete problems have polynomial approximation
schemes (FPRAS) using Monte Carlo methods, or otherwise.
Notably, the knapsack problem which~\cite{Nar1} reduced to computing
LR-coefficients is known to be have a FPRAS (see~\cite{PP-future}).

To appreciate the complexity of approximating LR and Kronecker
coefficients, recall a simple reduction from general contingency
tables to LR~coefficients given in~\cite{PV2}.
Approximating the number $\CT(\al,\be)$ of $\ell \times m$ contingency
tables with given marginal sums~$\al$ and~$\be$ is a notoriously
difficult problem both theoretically and algorithmically. We refer
to~\cite{DG} for a broad survey, and~\cite{PP-future} for
more recent references.

Let us mention here the recent estimate for $\CC(\cdot)$ in the
case of  smooth boundary conditions~\cite{Ben}.  For \emph{all} margins,
the only upper bound we know is the elegant but hard to compute
bound in~\cite{Bar1}.  Thus, we included the Proposition as a simple
benchmark measure to compare with other upper bounds.

\subsection{}  \label{ss:fin-char}
In a special case $\la=\mu=\mu'$, Theorem~\ref{t:char_effective} and
the Murnaghan--Nakayama rule gives a weak bound $g(\mu,\mu,\mu) \ge 1$
proved earlier in~\cite{BB}.  In~\cite{PPV}, we apply a qualitative
version of this result to a variety of partitions generalizing
hooks and two-row partitions.
Unfortunately, computing the characters of $S_n$ in
\SP-hard~\cite{PP_c}.  It is thus unlikely that this
approach can give good bounds for general
\emph{tensor squares} $g(\la,\mu,\mu)$ (cf.~\cite{Val2}).

\subsection{} \label{ss:fin-two-rows}
For general triples, neither of the three upper
bounds on $g(\la,\mu,\nu)$ given Section~\ref{s:gen}, seem to
dominate another.  We use the example $g(\la,\la,\la)$ with
$\la=(m,m)$  for simplicity and clarity of exposition. The exact
value in this case is~$0$ for odd~$m$ and~$1$ for even~$m$ (see~\cite{BOR}).
This underscores the difficulty of getting sharp lower and upper bounds.


\subsection{}  \label{ss:fin-qbin}
As we mentioned in the introduction, the first lower bound
$\pq_k(\ell,m)\ge 1$ was obtained by the authors in~\cite{PP_s}.
This was quickly extended in followup papers~\cite{Dha} and~\cite{Zan},
both of which employing O'Hara's combinatorial proof~\cite{O}.  First,
Zanello's proof gives:
$$
\pq_k(\ell,m) \. > \. d\., \ \ \ \text{for}
\ \ \ell \. \ge \. d^2+5d+12\ts, \ \, m \. \ge \. 2d+4\ts, \ \,
4d^2 + 10d+7 \le \. k \. \le \ell m/2\ts,
$$
see the proof of Prop.~4 in~\cite{Zan}.  Similarly, Dhand's proves
somewhat stronger bounds
$$
\pq_k(\ell,m) \. > \. d\., \ \ \ \text{for} \ \
\ell, \ts m \. \ge \. 8\ts d, \ \ 2 d \le \. k \. \le \ell m/2\ts,
$$
see Theorem~1.1 in~\cite{Dha}.  Ignoring constraints of $\ell$ and~$m$,
the bounds give $\pq_k =\Omega(\sqrt{k})$ and $\pq_k =\Omega(k)$, which
are substantially inferior our $\pq_k =\exp \Omega(\sqrt{k})$ bound
in Theorem~\ref{t:qbin-main}.

The sequence $p_k(\ell,m)$ has remarkably sharp asymptotics bounds,
including the \emph{central limit theorem} (CLT) in much greater
generality~\cite{CJZ} (see also references in the Corrigendum),
the Hardy--Ramanujan type formula~\cite{AA}, and especially
the CLT with the error bound~\cite{Tak}.  Also, when
$\ell$ is fixed, sharp asymptotic bounds are given in~\cite{SZ}.

%

\subsection{}  \label{ss:fin-qbin-ext}
One is tempted to ask whether for Theorem~\ref{t:k-stab-limit} can be
extended to general $\la,\mu,\nu \vdash n$ and $\al,\be,\ga\vdash k$,
to give a bounded function
$$
G(t) \, := \, g(\la + t\al, \mu + t\be, \nu + t\ga)\ts.
$$
Unfortunately, this fails already for $\la=\mu=\nu=\emp$,
$\al=\be=(1^4)$ and $\ga=(2^2)$.  Using Lemma~\ref{l:g_partitions}, we have:
$$
G(t) \, = \, g\bigl(t^4,\ts t^4,\ts t^2\bigr) \, = \, p_{2t}(4,t) \. - \. p_{2t-1}(4,t) \, = \, \theta(t)\ts,
$$
where the asymptotics follows from~\cite{West} or~\cite[Th.~4.6]{SZ}.

\subsection{}\label{ss:gen-LR}
Curiously, a bound similar to propositions~\ref{p:upper-dim} and~\ref{p:upper-GL},
holds for the LR~coefficients as well.  Although we do not need it for this work, we
include it here for reader's convenience.

\begin{prop}  \label{p:upper-lr}
Let $\al\vdash p+q$, $\be\vdash p$, $\ga \vdash q$.
Then, the LR coefficient satisfies:
$$c^{\al}_{\beta\gamma} \, \leq \. \frac{f^{\al}}{f^{\beta}\ts f^{\gamma}}\,,$$
\end{prop}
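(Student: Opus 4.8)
The plan is to mimic the argument for Proposition~\ref{p:upper-dim}, replacing the defining identity for Kronecker coefficients by the classical branching/restriction rule for the Littlewood--Richardson coefficients. Recall that $c^{\al}_{\be\ga}$ is the multiplicity of the irreducible $S_{p+q}$-module $S^{\al}$ in the induction $S^{\be}\boxtimes S^{\ga}\!\uparrow_{S_p\times S_q}^{S_{p+q}}$, equivalently (by Frobenius reciprocity) the multiplicity of $S^{\be}\boxtimes S^{\ga}$ in the restriction $S^{\al}\!\downarrow_{S_p\times S_q}$. First I would write this restriction as a direct sum
$$
S^{\al}\big\downarrow_{S_p\times S_q} \, = \, \bigoplus_{\be\vdash p,\ \ga\vdash q} \, c^{\al}_{\be\ga} \, \bigl(S^{\be}\boxtimes S^{\ga}\bigr)\ts.
$$
Taking dimensions of both sides gives $f^{\al} = \sum_{\be,\ga} c^{\al}_{\be\ga}\, f^{\be} f^{\ga}$, where the sum runs over all $\be\vdash p$, $\ga\vdash q$.

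Since every term on the right-hand side is a nonnegative integer (all $c^{\al}_{\be\ga}\ge 0$ and all dimensions are positive), the single summand indexed by the given pair $(\be,\ga)$ is bounded above by the full sum:
$$
c^{\al}_{\be\ga}\, f^{\be} f^{\ga} \, \le \, f^{\al}\ts.
$$
Dividing through by $f^{\be} f^{\ga}>0$ yields exactly the claimed inequality $c^{\al}_{\be\ga}\le f^{\al}/(f^{\be} f^{\ga})$, and the right-hand side is of course explicitly computable via the hook length formula~\eqref{eq:hlf}. This is essentially the same one-line trick as in Proposition~\ref{p:upper-dim}: a structure coefficient is dominated by the dimension identity it sits inside.

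There is no real obstacle here; the only point requiring the slightest care is bookkeeping of which symmetric group each partition indexes and confirming that the branching rule for $S_{p+q}\downarrow S_p\times S_q$ is indeed governed by the LR coefficients (this is standard, see e.g.~\cite{Mac}). Alternatively, one can bypass representation theory entirely and argue symmetric-function-side: $s_{\al}[x,y] = \sum_{\be,\ga} c^{\al}_{\be\ga}\, s_{\be}(x)\, s_{\ga}(y)$ by the definition of the skew Schur expansion together with the coproduct formula, and then specialize $x=(1^N)$, $y=(1^M)$ for any $N\ge\ell(\be)$, $M\ge\ell(\ga)$ and drop all but one nonnegative term, giving the more general bound $c^{\al}_{\be\ga}\le s_{\al}(1^{N+M})/(s_{\be}(1^N)\, s_{\ga}(1^M))$; the stated proposition is the case where one instead specializes directly to the regular-representation dimension. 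Either route produces the result in a few lines.
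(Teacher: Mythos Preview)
Your proposal is correct and is essentially the same argument as the paper's. The paper phrases it on the symmetric-function side, using the identity \ts $s_{\al}(x,y)=\sum_{\eta,\zeta} c^{\al}_{\eta\zeta}\, s_{\eta}(x)\, s_{\zeta}(y)$ \ts and extracting the coefficient of the squarefree monomial \ts $x_1\cdots x_p\, y_1\cdots y_q$, which yields exactly your dimension identity \ts $f^{\al}=\sum_{\be\vdash p,\,\ga\vdash q} c^{\al}_{\be\ga}\, f^{\be} f^{\ga}$ \ts before dropping all but one nonnegative term; your restriction-to-$S_p\times S_q$ formulation is the representation-theoretic translation of the same step, and you even sketch the paper's symmetric-function route as your alternative.
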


\begin{proof}
Recall the identity
\begin{equation}\label{eq:lr-schur}
s_{\al}(x,y) \, = \, \sum_{\eta,\ts \zeta} \. c^{\al}_{\eta\zeta} \. s_{\eta}(x)\ts s_{\zeta}(y)\.,
\end{equation}
where the summation is over all $\eta, \zeta \in \cP$, such that \ts
$|\eta|+|\zeta| = p+q$.  Now take the coefficients of \ts
$x_1\ldots x_{p}\ts y_1\ldots y_{p}$ \ts on both sides of~\eqref{eq:lr-schur},
and include only the term of the summation corresponding to $\eta=\be$ and $\zeta=\ga$.
We obtain
$$f^{\al} \, \ge \, \sum_{\eta\vdash p, \ts \zeta\vdash q} \. c^{\al}_{\eta\zeta} \.
f^{\eta}\ts f^{\ze} \, \ge \, c^{\al}_{\beta\gamma} \. f^{\beta}\ts f^{\gamma}\,,
$$
as desired.
\end{proof}

\vskip.8cm

\noindent
{\bf Acknowledgements.} \ We are grateful to Sasha Barvinok,
Jes{\'u}s De Loera, Stephen DeSalvo, Richard Stanley and Fabrizio Zanello
for many helpful conversations.  Special thanks to Ernesto Vallejo for
reading the early draft of the paper and help with the  references.  
The first author was partially supported by the NSF grants,
the second by the Simons Postdoctoral Fellowship.

}
\vskip.8cm


%

{\footnotesize

}

\end{document}